\definecolor{mydarkblue}{rgb}{0,0.08,0.45}
\def\xx{\textbf{x}}
\def\yy{\textbf{y}}
\def\zz{\textbf{z}}
\def\dd{\textbf{d}}
\def\XX{\textbf{X}}
\def\ZZ{\textbf{Z}}
\def\aa{\textbf{a}}
\def\bb{\textbf{b}}
\def\WW{\textbf{W}}
\def\II{\textbf{I}}
\def\yy{\textbf{y}}
\def\vv{\textbf{v}}
\def\ww{\textbf{w}}
\def\zz{\textbf{z}}
\def\SS{\textbf{S}}
\def\BB{\textbf{B}}
\def\AA{\textbf{A}}
\def\CC{\textbf{C}}
\def\MM{\textbf{M}}
\def\CC{\textbf{C}}
\def\DD{\textbf{D}}
\def\GG{\textbf{G}}
\def\PP{\textbf{P}}
\def\TT{\textbf{T}}
\def\VV{\textbf{V}}
\def\QQ{\textbf{Q}}
\def\UU{\textbf{U}}
\def\HH{\textbf{H}}
\def\EE{\textbf{E}}
\def\bSigma{{\boldsymbol \Sigma}}
\def\zzref{\textbf{z}_{\text{ref}}}
\def\yyref{\textbf{y}_{\text{ref}}}
\def\bChi{{\boldsymbol\chi}}
\def\balpha{{\boldsymbol \alpha}}
\def\calR{\mathcal{R}}
\def\calC{\mathcal{C}}
\def\calL{\mathcal{L}}
\def\HHref{{\HH}_{\text{ref}}}
\def\BBref{{\BB}_{\text{ref}}}
\def\MMref{{\MM}_{\text{ref}}}
\def\ZZref{{\ZZ}_{\text{ref}}}
\def\RR{{\mathbb R}}
\def\R{{\mathbb R}}
\renewcommand{\gg}{\boldsymbol{g}}
\renewcommand{\vec}{\mathbf{vec}}
\def\defas{\stackrel{\text{def}}{=}}
\DeclareMathOperator*{\Span}{\mathbf{span}}
\DeclareMathOperator*{\rank}{\mathbf{rank}}
\DeclareMathOperator*{\argmin}{{arg\,min}}
\newcommand{\norm}[1]{\|#1\|}
\definecolor{myblue}{HTML}{D2E4FC}
\definecolor{Gray}{gray}{0.92}
\newtheorem{proposition}{Proposition}
\newtheorem{lemma}{Lemma}
\newtheorem{theorem}{Theorem}
\newcommand{\DX}{\Delta\XX}
\newcommand{\DG}{\Delta\GG}
\begin{document}

\twocolumn[

\aistatstitle{Generalization of Quasi-Newton Methods:\\ Application to Robust Symmetric Multisecant Updates}

\aistatsauthor{ Damien Scieur$^{1,*}$ \And Lewis Liu$^{2,*}$ \And  Thomas Pumir$^3$ \And Nicolas Boumal$^4$ }

\runningauthor{Damien Scieur, Lewis Liu, Thomas Pumir and Nicolas Boumal}
\runningtitle{Generalization of Quasi-Newton Methods: Application to Robust Symmetric Multisecant Updates}

\aistatsaddress{} ]

\begin{abstract}

Quasi-Newton (qN) techniques approximate the Newton step by estimating the Hessian using the so-called secant equations. Some of these methods compute the Hessian using several secant equations but produce non-symmetric updates. Other quasi-Newton schemes, such as BFGS, enforce symmetry but cannot satisfy more than one secant equation. We propose a new type of quasi-Newton symmetric update using several secant equations in a least-squares sense. Our approach generalizes and unifies the design of quasi-Newton updates and satisfies provable robustness guarantees. 
\end{abstract}

\section{Introduction}

We consider second-order methods for unconstrained minimization of a smooth, possibly non-convex function $f\colon\R^{d} \rightarrow \R$.
Despite a locally quadratic convergence rate,
the well-known Newton method iteration 
\begin{equation}\label{eq::newton}
\xx_{k+1} = \xx_{k} - \big[ \nabla^{2}f(\xx_{k}) \big]^{-1}\nabla f(\xx_{k})
\end{equation}
is not suitable for large-scale problems, in part because it requires solving a $d\times d$ linear system involving the Hessian at every iteration. To address this issue, quasi-Newton algorithms replace the update rule~\eqref{eq::newton} by
\begin{align}
\xx_{k+1} & = \xx_{k} - \BB_{k}^{-1}\nabla f(\xx_{k}) \quad \text{or}  \nonumber\\
\xx_{k+1} & = \xx_{k} - \HH_{k}^{\hphantom{-1}}\nabla f(\xx_{k}), \label{eq::qN}
\end{align}
where $\BB_{k} \approx \nabla^{2}f(\xx_{k})$ and $\HH_{k} \approx \big[ \nabla^{2}f(\xx_{k}) \big]^{-1}$ are approximations of the Hessian and its inverse (respectively) at $\xx_{k}$. Choosing the right approximation has drawn considerable attention in the optimization literature, notably the DFP update~\citep{Davidon59QNewton},
Broyden method~\citep{Broyden65QNewton}, SR1 update~\citep{Byrdetal96SR1} and the well-known BFGS method~\citep{Broyden70QNewton},~\citep{Fletcher70QNewton},~\citep{Goldfarb70QNewton}~\citep{Shanno70QNewton}.
In general, those methods estimate a matrix $\BB_{k}$ or $\HH_k$ satisfying the \textit{secant} equation
\begin{eqnarray}
\hphantom{\HH_{k}}\nabla f(\xx_{k}) - \nabla f(\xx_{k-1}) & = & \BB_k (\xx_{k} - \xx_{k-1}) \; \text{or}  \nonumber \\
\quad \HH_{k}(\nabla f(\xx_{k}) - \nabla f(\xx_{k-1})) & = & \hphantom{\BB_k} \xx_{k} - \xx_{k-1},\label{eq::secant}
\end{eqnarray}
then perform the quasi-Newton step \eqref{eq::qN}. It is also possible to satisfy \textit{several} secant equations. For instance, the multisecant Type-I and Type-II Broyden methods \citep{fang2009two} find a \textit{non-symmetric} matrix $\BB_k$ or $\HH_k$ satisfying a block of secants: for a memory size $m$ and for $i=k-m+1\ldots k$,
\begin{eqnarray}
\hphantom{\HH_{k}}\nabla f(\xx_{i}) - \nabla f(\xx_{i-1}) & = & \BB_k [\xx_{i} - \xx_{i-1}] \; \text{or}  \nonumber \\
\quad \HH_{k}[\nabla f(\xx_{i}) - \nabla f(\xx_{i-1})] & = & \hphantom{\BB_k} \xx_{i} - \xx_{i-1}.\nonumber
\end{eqnarray}

By contrast, other methods like BFGS and DFP enforce the symmetry of the update, but they satisfy only \textit{one} secant equation, in which case \citet{Powell1986HowBA} showed their high dependence in the step size. Indeed, while BFGS and DFP enjoy an optimal convergence rate on quadratics using exact line-search~\citep{nocedal2006optimization}, \citet{Powell1986HowBA} showed that with a \textit{unitary} step size, these updates converge particularly slowly on a simple quadratic function with just two variables. Moreover, it was also observed that BFGS updates are sensitive to gradient noise, and designing quasi-Newton methods for stochastic algorithms is still a challenge \citep{Byrd2016ASQ,bollapragada2018progressive,bollapragada2019exact,berahas2020investigation}.

Unfortunately, except for quadratic functions~\citep{schnabel1983quasi}, it is usually impossible to find a symmetric matrix that satisfies more than one secant equation.
\cite{Gower2016StochasticBB} adopted Hessian-vector products instead of the secant equations.
Moreover, line search has been shown to be computationally expensive. Finally, the stabilisation procedure for stochastic BFGS usually requires a growing batch size to reduce the gradient noise, making it unpractical in many applications. 

In this paper, we tackle those problems by proposing a symmetric multisecant update, that satisfies the secant equations in a least-squares sense. We show their optimality on quadratics \textit{with unitary stepsize}, and prove their robustness to gradient noise, making them good candidates in the context of stochastic optimization.

\subsection{Notation} \label{sec:notations}

We use boldface small letters, like $\xx$, to refer to vectors and boldface capital letters, like $\AA$, for matrices. We use $d$ to refer to the \textit{dimension} of the problem, and $m$ for the \textit{memory} of the algorithm (we will see later that $m$ is the number of secant equations). For a function $f\colon\R^{d} \rightarrow \R$, its gradient and Hessian at $\xx$ are denoted by $\nabla f(\xx)$ and $\nabla^{2} f(\xx)$ respectively. Consistently with the notations in the literature, we use $\HH$ to denote an approximation of the \textit{inverse} of the Hessian, while we use $\BB$ to denote an approximation of the Hessian. We denote the usual \textit{Frobenius} norm as $\norm{\cdot}$. Moreover, for any square matrix $\AA \in \R^{d \times d}$ and any positive definite matrix $\WW \in \R^{d \times d}$, we define the norm $\norm{ \AA }_{\WW}$ as 
\begin{equation}\label{eq::weighted_frobenius}
    \textstyle \|\AA\|_\WW = \norm{\WW^{\frac{1}{2}}\AA\WW^{\frac{1}{2}}}.
\end{equation}

We often use the matrices $\XX,\, \GG \in\RR^{d\times m+1}$, that concatenates the iterates and their gradients as follow,
\[
    \XX = [\xx_i, \ldots, \xx_{i+m}], \;\; \GG = [\nabla f(\xx_i), \ldots, \nabla f(\xx_{i+m})].
\]
Also, we define $\CC$, and $\DX$ and $\DG$ as
\[
    \DX = \XX\CC, \qquad \DG = \GG\CC,
\]
where $\CC \in \RR^{m+1\times m}$ is a matrix of rank $m-1$ such that $\textbf{1}_{m+1}^T\CC = 0$, $\textbf{1}_{m+1}$ being a vector of size $m+1$ full of ones. Typically, $\CC$ is the column-difference matrix
\[
    \CC = \left[{\begin{smallmatrix}
    -1 & \hphantom{-}0 & \hphantom{-}0 & \ldots \\
    \hphantom{-}1 & -1 & \hphantom{-}0 & \ldots \\
    \hphantom{-}0 & \hphantom{-}1 & \hphantom{-}-1 & \ldots \\
    &  & \ddots& \ddots \\
    &  &  & \hphantom{-}1 & -1 \\
    &  &  & \hphantom{-}0  & \hphantom{-}1
    \end{smallmatrix}}\right] .
\]

\subsection{Related work}

The idea of updating an approximation of the Hessian or its inverse can be traced back to~\citet{Davidon59QNewton,Davidon59QNewtonSIAM} with the DFP update. Several updates, such as the Broyden method~\citep{Broyden65QNewton} or the BFGS method~\citep{Broyden70QNewton,Fletcher70QNewton,Goldfarb70QNewton,Shanno70QNewton} have been proposed since then. Notably,~\citet{Dembo82InexactQN},~\citet{Dembo1983TruncatedNewton} proposed to approximately invert the Hessian using a Conjugate Gradient method. Limited memory BFGS (L-BFGS)~\citep{LiuNocdeal1989LBFGS}, where a limited number of vectors are stored for the approximation of the Hessian, has proven to be a powerful type of quasi-Newton method. The use of multisecant equations has also been used in a different context by~\citet{Gower14ActionConstrainedQN} and~\citet{Hennig15Probabilistic}, and their connection with Anderson Acceleration \citep{anderson1965iterative} was studied by \citep{fang2009two}. This connection, combined with recent results on Anderson Acceleration \citep{toth2015convergence,walker2011anderson,rohwedder2011analysis,scieur2016regularized,scieur2018online}, especially in the stochastic \citep{scieur2017nonlinear} and non-smooth \citep{zhang2018globally} settings, may indicates that multisecant methods also enjoy some good theoretical properties.
To scale up second-order methods, recent works focus on stochastic quasi-Newton methods. The use of stochastic quasi-Newton updates has been investigated by~\citet{schraudolph07StoQN},~\citet{mokhtari15aOnlineBFGS},~\citet{Moritz16StoBFGS},~\citet{Byrd2016ASQ} and~\citet{Gower2016StochasticBB}, while approximating the Hessian through sampling methods has been proposed by \citet{ErdogduMontanari15HessianApprox},~\citet{Xu16SubSampledNewton} and~\citet{Agarwal17SecondOrder}, among others.

We now present two popular quasi-Newton updates: the BFGS method, and the multi-secant Broyden method. They will serve as a basis to motivate the needs of generalization of quasi-Newton updates.

\subsubsection{Single secant DFP/BFGS updates}

The BFGS update finds a symmetric matrix $\HH_{k}$ that satisfies the secant equation~\eqref{eq::secant}. Among the many possible solutions, it selects the one closest to $\HH_{k-1}$ in a weighted Frobenius norm \eqref{eq::weighted_frobenius}, specifically,
\begin{align}\label{eq::regularBFGS}
\begin{split}
\HH_{k} =  & ~ \underset{\HH=\HH^T}{\mathrm{argmin}}  \lVert \HH - \HH_{k-1} \rVert_{\WW}\\ 
\text{s.t. }& ~ \HH(\nabla f(\xx_{k}) - \nabla f(\xx_{k-1})) = \xx_{k} - \xx_{k-1}.
\end{split}
\end{align}
where $\WW$ is \emph{any} positive definite matrix such that $\WW(\nabla f(\xx_{k}) - \nabla f(\xx_{k-1})) = \xx_{k} - \xx_{k-1}$ \citep[\S8.1]{nocedal2006optimization} --- a similar claim holds for the update formula of $\BB_k$, known as DFP, whose update reads
\begin{align}\label{eq::regularDFP}
\begin{split}
    \BB_{k} =  & ~ \underset{\BB=\BB^T}{\mathrm{argmin}}  \lVert \BB - \BB_{k-1} \rVert_{\WW^{-1}}\\ 
    \text{s.t. }& ~ \BB(\xx_{k} - \xx_{k-1}) = \nabla f(\xx_{k}) - \nabla f(\xx_{k-1}).
\end{split}
\end{align}
The matrix is then inverted using the Woodbury matrix identity. In the two update rules, the matrices $\WW$ and $\WW^{-1}$ are used implicitly, i.e., we do not need to form $\WW$ to evaluate $\HH_k$ nor $\BB_k$.

Solving~\eqref{eq::regularBFGS} repeatedly, BFGS builds a sequence $\HH_1, \HH_2, \ldots$ of matrices such that each $\HH_{k}$ satisfies the $k$th secant equation. 
While it may satisfy the $k-1$ other secants approximately, the update rule offers no such guarantees. The same holds for the DFP update.

\subsubsection{Multi-secant Broyden updates}\label{multisecant_broyden}

In the case of Broyden updates, we seek a matrix $\BB$ for the type-I, or $\HH$ for the type-II, that satisfies the secant equations only, without any restriction on the symmetry of the estimate. The update of the standard Broyden method reads, for $i=k-m,\,\ldots,k$,
 \begin{align}
\begin{split}
\BB_{k} =  & ~ \underset{\BB}{\mathrm{argmin}}  \lVert \BB - \BB_{k-m} \rVert\\ 
\text{s.t. }& ~ \BB(\xx_{i} - \xx_{i-1}) = \nabla f(\xx_{i}) - \nabla f(\xx_{i-1}),\\
\HH_{k} =  & ~ \underset{\HH}{\mathrm{argmin}}  \lVert \HH - \HH_{k-m} \rVert\\ 
\text{s.t. }& ~ \HH(\nabla f(\xx_{i}) - \nabla f(\xx_{i-1})) = \xx_{i} - \xx_{i-1}. \label{eq::regular_broyden}
\end{split}
\end{align}
As for the DFP update, the matrix $\BB_k$ can also be inverted cheaply. In \citep{fang2009two}, the authors show how to extend this update to the case where we want to satisfy more than one secant equation. However, its solution is generally not symmetric.

\subsection{Contributions}

Quasi-Newton methods approximate the Hessian. The previous section shows  they do this in very different ways that seem incompatible given the work of \citet{schnabel1983quasi}.
Despite their differences, they share similarities, such as the idea of secant equations. This leads to the following questions:
\begin{quote}
    \emph{Is it possible to design a generalized framework for quasi-Newton updates encompassing Broyden's, DFP and BFGS schemes?}
\end{quote}
\begin{quote}
    \emph{Can Symmetric and Multisecant techniques be combined into a single update?}
\end{quote}

Our work proposes a positive answer to these questions trough the following contributions.
\begin{itemize}
    \item We propose a general framework that models and generalizes previous quasi-Newton updates.
    \item We derive new quasi-Newton update rules (Algorithm \ref{algo:symmetric_multisecant_qn}), which are \text{symmetric} and take into account \textit{several secant equations}. The bottleneck is an (economic size) Singular Value Decomposition (SVD), whose complexity is linear in the dimension of the problem, therefore comparable to other quasi-Newton methods.
    \item We show the optimality of the convergence rate of any multisecant quasi-Newton update built using our framework, on quadratic functions \textit{without line search}. This improves over the BFGS and DFP updates as they are inefficient with unitary step size on quadratics~\citep{Powell1986HowBA}, and suboptimal if exact line-search is not used.
    \item We introduce novel \textit{robust updates}, that provably reduce the sensitivity to the noise of our quasi-Newton schemes. This robustness property is a direct consequence of considering several secant equations at once.
\end{itemize}

\paragraph{Organization of the paper} In Section \ref{sec:generic_qn} we list the desirable properties of quasi-Newton schemes, and end with a generic quasi-Newton update. The choice of its parameters, like the loss/regularization functions, the preconditioner, the number of secants or the initialization leads to different, existing methods but also to potentially new ones. Then, Section~\ref{sec:robust_multisecant} proposes a novel quasi-Newton scheme (Algorithm \ref{algo:symmetric_multisecant_qn}) based on our framework, combining the ideas of DFP/BFGS and multisecant Broyden methods. This algorithm has the advantage of presenting a regularization term, which controls the stability of the update.

\begin{algorithm}[t]
  \caption{Type-I Symmetric Multisecant step}\label{algo:symmetric_multisecant_qn}
  \begin{algorithmic}[1]
    \STATEx (See Appendix \ref{sec:multisecant_algo} for the type-II version)
    \REQUIRE Function $f$ and gradient $\nabla f$, initial approximation of the Hessian $\BBref$, maximum memory $m$ (can be $\infty$), relative regularization parameter $\bar \lambda$.
    \STATE Compute $g_0 = \nabla f(x_0)$ and perform the initial step
    \[
        \xx_1 = \xx_0-\BB^{-1}_0 \gg_0\vspace{-2ex}
    \]
    \FOR{$t=1,2,\ldots$}
    	\STATE Form the matrices $\DX$ and $\DG$ (see Section \ref{sec:notations}) using the $m$ last pairs $(\xx_i,\nabla f(\xx_i))$.
    \STATE Compute the quasi-Newton direction $\dd$ as \vspace{-1ex}
    \[ 
        \dd_t = -\ZZ_\star^{-1} g_t, \vspace{-1ex}
    \]
    see \eqref{eq:zstar_inv} with $\AA = \DX$, $\DD=\DG$, $\qquad\qquad$ \mbox{$\ZZref = \BBref$}, $\lambda = \bar\lambda \|\AA\|$.
    \STATE Perform an approximate-line search\vspace{-1ex}
    \[
        \xx_{t+1} = \xx_t + h_t\dd_t, \quad h_t \approx \argmin_h f\big( \xx_t + h_t\dd_t \big).
    \vspace{-1ex}
    \]
    \ENDFOR
  \end{algorithmic}
\end{algorithm}

\section{Generalization of Quasi-Newton} \label{sec:generic_qn}

We have seen in the previous section two different quasi-Newton (qN) updates: one that focuses on the \textit{symmetry} of the estimate, the other on the number of satisfied \textit{secant equations}. In this section, we propose a unified framework to design existing and new qN schemes.

\subsection{Generalized (Multi-)Secant Equations} \label{sec:multisecant_eq}

The central part of qN methods is the secant equation. The idea follows from the linearization of the gradient of the objective function. Indeed, consider the function $f(\xx)$, assumed to be smooth, strongly convex and twice differentiable. The linearization of its gradient around the minimum $\xx_\star$ satisfies
\begin{equation}
    \nabla f(\xx) \approx \underbrace{\nabla f(\xx_{\star})}_{=0} + \nabla^2 f(\xx_{\star})(\xx-\xx_{\star}). \label{eq:linearization_gradient}
\end{equation}
After a ``Newton step'', we get
\[
    \xx - [\nabla^2f(\xx^{\star})]^{-1}\nabla f(\xx) \approx \xx_\star.
\]
Unfortunately, we do not have access to the matrix $\nabla^2f(\xx^{\star})$ as we do not know $\xx_{\star}$. Moreover, solving the linear system $[\nabla^2f(\xx^{\star})]^{-1}\nabla f(\xx)$ may be costly when $d$ is large.

To overcome such issues, consider a sequence $\{\xx_0,\ldots,\xx_{m}\}$ of points at which we have computed the gradients. Then, \eqref{eq:linearization_gradient} can be stated as
\[
    \GG = \nabla^2 f(\xx_{\star}) (\XX - \XX_{\star}),
\]
where $\XX_{\star} = \xx_{\star}\textbf{1}_{m+1}^T$, i.e., the matrix concatenating $m+1$ copies of the vector $\xx_{\star}$. Matrices $\XX$ and $\GG$ are defined in Section \ref{sec:notations}. 

Ideally, the estimate $\BB$ of the Hessian, or the estimate of its inverse $\HH$, has to satisfy the condition
\[
    \GG = \BB (\XX - \XX_{\star}) \;\; \text{ or } \;\; \HH \GG = (\XX - \XX_{\star}).
\]
 However, the dependency on $\xx_{\star}$ makes the problem of estimating $\BB$ or $\HH$ intractable. To remove this problematic dependency, consider a matrix $\CC\in\RR^{m+1\times m}$ of rank $m$ such that $\textbf{1}_{m+1}^T\CC = 0$ (see Section \ref{sec:notations} for an example). After multiplying by $\xx_{\star}$ on the right, we simplify $\XX_{\star}\CC=0$ and we obtain the \textit{multisecant equations}
\begin{equation} \label{eq:multisecant_equation}
    \DG = \BB \DX, \;\; \text{ or } \;\; \HH \DG = \DX,
\end{equation}
where $\DX$ and $\DG$ are defined in Section \ref{sec:notations}. In the specific case where we have only one secant equation, \eqref{eq:multisecant_equation} corresponds exactly to the standard secant equation in \eqref{eq::regularBFGS}. In the case where $\CC$ is the column-difference operator, we obtain the multisecant equations usually used in multisecant Broyden methods.

\subsection{Regularization and Constraints}

The matrices $\BB$ (Broyden Type-I and DFP updates) and $\HH$ (Broyden Type-II and BFGS) are selected so as to minimize the distances w.r.t.\ the reference matrices, called $\BBref$ and $\HHref$ respectively, as shown in~\eqref{eq::regular_broyden}. In the case where there is only a sequence of single secant equations, the reference matrix is taken as being the previous estimate, with an arbitrary initialization. In the case of a multisecant update, the reference matrix is arbitrary. Moreover, in the case of DFP and BFGS, we have in addition a \textit{symmetry} constraint, restraining even more the search space for the estimate of the Hessian. For simplicity, we will consider only the type-I update here, i.e., the estimate $\BB$. 
The formulation for estimate $\HH$ can be easily derived by swapping $\DG$ and $\DX$.

The intuition behind the regularization term is due to the number of degrees of freedom in the problem. The secant equation $\BB\DX = \DG$ defines the behavior of the operator $\BB$, mapping from $\Span\{\DX\}$ to $\Span\{\DG\}$. However, the dimension of these two spans is as most $m<d$. This means we have to define the behavior of $\BB$ \textit{outside} $\Span\{\DX\}$ and $\Span\{\DG\}$, i.e., from $\Span\{\DX\}^{\perp}$ to $\Span\{\DG\}^{\perp}$. 

Since $\BB$ outside the span is not driven by the secant equations, we have to define an  operator $\BBref$, characterizing the default behavior of $\BB$ outside the span of secant equations. This means that, in the case where $\BB$ satisfies exactly the secant equations, $\BB$ reads
\[
    \BB = [\DG\DX^{\dagger}] + \Theta(\II-\PP),
\]
where $\PP$ is the projector to the span of $\DX$, $\DX^{\dagger}$ is a pseudo-inverse of $\DX$, and  $\Theta$ depends on $\BBref$ and constraints (different $\Theta$ lead to different qN updates). In this way, $\BB$ satisfies the secant equation, since  multiplying $\BB$ by $\DX$ gives $\DG$, 
\begin{eqnarray*}
    \BB \DX & = &  \DG\DX^{\dagger}\DX  + \Theta(\II-\PP)\DX.
\end{eqnarray*}
We have $\PP\DX = \DX$, thus $(\II-\PP)\DX = 0$ (by construction of $\PP$). Moreover, $\DG\DX^{\dagger}\DX = \DG$ by definition of the pseudo-inverse.

The way $\BB$ behaves outside the span is thus driven by $\Theta$, which depends on the regularization, the initialization $\BBref$ and the constraints. To make a parallel with machine learning problems, $\Theta$ can be seen as the ``generalization'' (or ``out-of-sample'') term. We give example choices for $\Theta$ in Appendix \ref{sec:example_qn_methods}.

Consider the regularisation function $\calR(\cdot,\BBref)$, assumed to be strictly-convex, whose minimum is attained at $\BBref$, and the convex constraint set $\mathcal{C}$. We can write the qN update estimation problem as
\begin{equation}
    \min_{\BB \in \calC} \calR(\BB,\BBref) \quad \text{subject to } \BB\DX = \DG. \label{eq:exact_qn}
\end{equation}

This approach generalizes the way we define qN updates. Indeed, for instance, we recover DFP by setting $\calR = \|\BB-\BBref\|_{\WW^{-1}}$, $\calC = \mathbb{S}^{d\times d}$ (the set of symmetric matrices), $m=1$ and $\BBref = \BB_{k-1}$ in \eqref{eq:exact_qn}. We also recover the Type-I Broyden method by setting $\calR = \|\BB-\BBref\|$ and $\calC = \RR^{d\times d}$.

\subsection{Generalized QN Update}

A natural extension, given the updates of DFP/BFGS and multisecant Broyden, would be the symmetric multi-secant update. This update would read, for an arbitrary regularization function,
\begin{equation*}
    \min_{\BB \in \mathbb{S}^{d \times d}} \calR(\BB,\BBref) \quad \text{subject to } \BB\DX = \DG.
\end{equation*}
In the case where $m>1$, this multisecant technique seems promising as it combines the advantages of multisecant Broyden and symmetric updates.

Assuming $\DX, \DG$ have full column rank, these equations always have a solution $\BB$. However, there exists a \textit{symmetric} solution \textit{if and only if} $\DX^T\DG$ is symmetric~\citep{schnabel1983quasi,Don87LME}.

When $\DX^T\DG$ is symmetric, \citet{schnabel1983quasi} derived a multisecant BFGS update rule. 
This assumption indeed holds for quadratic objectives, but not for general objective functions when $m \geq 2$, that is, when we consider more than one secant condition~\citep[Example 3.1]{schnabel1983quasi}. Hence, a naive extension of symmetric quasi-Newton update leads to infeasible problems.

To tackle the problem of infeasible updates, we can relax the constraint on the secant equations by a \textit{loss function} $\calL(\cdot, \DX, \DG)$. We finally end up with the \textit{generalized (type-I and type-II) qN update}
\begin{equation}
    \BB_k = \lim_{\lambda\rightarrow 0} \argmin_{\BB\in\calC} \calL(\BB,\DX,\DG) + \frac{\lambda}{2} \mathcal{\calR}(\BB,\BBref) \tag{GQN-I} \label{eq:gqn_t1}
\end{equation}
\begin{equation}
    \HH_k = \lim_{\lambda\rightarrow 0} \argmin_{\HH\in\calC} \calL(\HH,\DG,\DX) + \frac{\lambda}{2} \mathcal{\calR}(\HH,\HHref) \tag{GQN-II} \label{eq:gqn_t2}
\end{equation}
where we assume that $\calL$ and $\calR$ are strictly convex, and sufficiently simple to have an explicit formula for $\HH_k$. The limits here simply state that we first minimize the loss function, then with the remaining degrees of freedom we minimize the regularization term. In the case where the update \eqref{eq:exact_qn} is feasible, \eqref{eq:gqn_t1}/\eqref{eq:gqn_t2} and \eqref{eq:exact_qn} are equivalent.

\subsection{Preconditioning} \label{sec:preconditioner}

As shown for instance in DFP and BFGS, it is common to use a preconditioner to reduce the dependence of the update to the units of the Hessian. We give here the example for type-II update. The type-I follows immediately by considering $\WW^{-1}$ instead of $\WW$.

The idea of preconditioning is, instead of considering $\HH$, to set
\[
    \MM = \WW^{(1-\alpha)}\HH\WW^{\alpha},
\]
where $\WW$ ideally has the same units as the \textit{Hessian} of the function $f$. For example, in BFGS, $\WW$ is \textit{any} matrix such that $\WW\DX=\DG$, which always exists in the case where $\DX$ and $\DG$ are vectors. Ideally, the preconditioner cancels the units in the update rules, i.e., $\WW$ has to have the same units as the Hessian.

In the case where we consider a preconditioner,
\[
    \MM\WW^{-\alpha} \DX = \WW^{1-\alpha}\DG, \;\; \MMref = \WW^{\alpha-1}\HHref\WW^{-\alpha}.
\]
We now have the \textit{type-II Preconditioned Generalized Quasi-Newton} update 
\begin{align}
    \argmin_{\MM \in \tilde \calC} \;\;\;&  \calL(\MM,\WW^{-\alpha}\DX,\WW^{(1-\alpha)}\DG)  + \frac{\lambda}{2} \mathcal{\calR}(\MM,\MMref) \tag{PGQN-II} \label{eq:gqn_t2_precond}
\end{align}
where $\tilde \calC = \WW^{(1-\alpha)}\calC\WW^{\alpha}$, i.e., the image of the constraint after  application of the preconditioner. To retrieve the update $\HH$, it suffices to solve
\[
    \HH = \WW^{-(1-\alpha)}\MM\WW^{-\alpha}.
\]

\subsection{Rate of Convergence on Quadratics}

Our theorem below shows that generalized qN methods \eqref{eq:gqn_t1} and \eqref{eq:gqn_t2} are optimal on quadratics under mild assumptions, in the sense that their performance is comparable to conjugate gradients.

\begin{theorem}\label{thm::opt_qN_quad}
Consider \textit{any} multisecant quasi-Newton method \eqref{eq:gqn_t2} with unitary step-size and $m=\infty$,
\begin{equation}
    \xx_{k+1} = \xx_k - \HH_k \nabla f(\xx_k) \label{eq:qn_update}
\end{equation}
where $f$ is the quadratic form $(\xx-\xx_\star)^{T}\frac{\QQ}{2}(\xx-\xx_\star)$ for some $Q\succ 0$, and $\HH$ satisfies exactly the secant equations. If the update \eqref{eq:qn_update} is a \textit{preconditioned first-order method}, i.e., there exists a symmetric positive definite matrix $\tilde \HH$ independent of $k$ such that
\[
    \xx_{k+1} \in \xx_0 + \tilde \HH \Span\{\nabla f(\xx_0), \ldots, \nabla f(\xx_k)\}
\]
then $\xx_k=\xx_{\star}$ if $k\geq d+1$; for smaller $k$ the method satisfies the rate
\[
    \| \nabla f(\xx_k) \| \leq \mathcal{O} \Big(\textstyle \frac{1-\sqrt{\kappa}}{1+\sqrt{\kappa}}\Big)^k \| \nabla f(\xx_0) \|,
\]
Where $\kappa$ is the inverse of the condition number of $\tilde \HH \QQ$.
\end{theorem}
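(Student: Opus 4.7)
The plan is to combine the exact inversion forced by the secant equations with the Krylov-subspace confinement implied by the preconditioned first-order hypothesis, and then invoke the classical Chebyshev-polynomial bound of CG. Set $\eeps_k := \xx_k - \xx_\star$, so $\nabla f(\xx_k) = \QQ\eeps_k$ and $\DG = \QQ\DX$. The secant equation $\HH_k \DG = \DX$ then reads $\HH_k \QQ \DX = \DX$, i.e.\ $\HH_k\QQ$ acts as the identity on $\Span\{\DX_k\}$; with $m=\infty$ and the column-difference choice of $\CC$, that span contains every past displacement, and in particular $\xx_k - \xx_0$. Writing $\nabla f(\xx_k) = \nabla f(\xx_0) + \QQ(\xx_k - \xx_0)$ and applying $\HH_k\QQ(\xx_k-\xx_0) = \xx_k-\xx_0$, the qN iteration telescopes into the single identity
\[
    \xx_{k+1} = \xx_0 - \HH_k\,\nabla f(\xx_0), \qquad \eeps_{k+1} = (\II - \HH_k\QQ)\eeps_0.
\]

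Next I would establish by induction on $j$ that $\xx_j - \xx_0$ lies in the preconditioned Krylov subspace $\mathcal{K}_j := \Span\{(\tilde\HH\QQ)^i \tilde\HH\nabla f(\xx_0) : 0 \le i < j\}$. Given the statement for all $j \le k$, the identity $\nabla f(\xx_j) = \nabla f(\xx_0) + \QQ(\xx_j-\xx_0)$ places each past gradient in $\Span\{(\QQ\tilde\HH)^i\nabla f(\xx_0) : 0\le i\le j\}$, so the preconditioned first-order assumption applied at step $k+1$ confines $\xx_{k+1} - \xx_0$ to $\mathcal{K}_{k+1}$. Consequently $\HH_k\nabla f(\xx_0) = q_k(\tilde\HH\QQ)\,\tilde\HH\QQ\,\eeps_0$ for some polynomial $q_k$ of degree $\le k$, and setting $p_{k+1}(t) := 1 - t\,q_k(t)$ yields the error representation
\[
    \eeps_{k+1} = p_{k+1}(\tilde\HH\QQ)\,\eeps_0, \qquad p_{k+1}(0) = 1, \qquad \deg p_{k+1} \le k+1.
\]
Finite termination is then immediate: generically $\mathcal{K}_{d+1} = \RR^d$, whereupon the accumulated secant equations force $\HH_d = \QQ^{-1}$ on $\RR^d$ and $\xx_{d+1} = \xx_0 - \QQ^{-1}\nabla f(\xx_0) = \xx_\star$.

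For the convergence rate I would bound $\|\eeps_{k+1}\|_{\QQ}^2 = \|p_{k+1}(\tilde\HH\QQ)\eeps_0\|_{\QQ}^2$ spectrally, reducing it to the classical minimax problem of polynomials of degree $\le k+1$ with $p(0)=1$ on the spectrum of $\tilde\HH\QQ$; the Chebyshev estimate yields the asserted $\bigl(\tfrac{1-\sqrt\kappa}{1+\sqrt\kappa}\bigr)^{k}$ rate, and the passage from $\|\eeps_{k+1}\|_\QQ$ to $\|\nabla f(\xx_{k+1})\|$ is absorbed into the $\mathcal{O}$-constant via $\|\QQ\eeps\| \le \|\QQ\|^{1/2}\|\eeps\|_\QQ$. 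The main obstacle lies precisely here: the derivation above only shows that $p_{k+1}$ is \emph{some} admissible normalized polynomial of the right degree, whereas the Chebyshev bound is the \emph{infimum} over all such polynomials. Closing this gap requires showing that the particular $p_{k+1}$ produced by an arbitrary multisecant qN method either coincides with, or is dominated in the relevant norm by, the minimax polynomial — something I would expect to follow from a Galerkin-type orthogonality forced jointly by the secant equations (identity of $\HH_k\QQ$ on $\mathcal{K}_k$) and the preconditioned first-order constraint (confinement of $\HH_k\nabla f(\xx_0)$ in $\mathcal{K}_{k+1}$), mirroring the variational characterization of preconditioned CG and making Theorem~\ref{thm::opt_qN_quad} an "any admissible qN method is effectively CG on quadratics" statement.
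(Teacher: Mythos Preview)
Your reduction to $\eeps_{k+1}=p_{k+1}(\tilde\HH\QQ)\eeps_0$ with $p_{k+1}(0)=1$ is correct, and you rightly flag the gap: nothing yet forces $p_{k+1}$ to be the Chebyshev minimizer. The Galerkin route you propose, however, cannot close it. The hypotheses pin down $\HH_k$ only on $\Span\{\DG_k\}$; since $\nabla f(\xx_0)$ generically lies outside that span, $\HH_k\nabla f(\xx_0)$ --- hence $p_{k+1}$ --- depends on the undetermined ``out-of-span'' part of $\HH_k$. Different admissible methods therefore produce genuinely different polynomials, so no CG-type variational identity can hold for \emph{all} of them. The theorem is not asserting that the method \emph{is} preconditioned CG; it asserts only the CG rate up to the hidden $\mathcal{O}$-constant.

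The paper's fix is more elementary, and you already hold the key ingredient. You observed that $\HH_k\QQ$ restricts to the identity on $\Span\{\DX_k\}$, and your Krylov induction gives $\Span\{\DX_k\}=\mathcal{K}_k$. Hence $\II-\HH_k\QQ$ annihilates every $w\in\mathcal{K}_k$, so the telescoped identity is unchanged under $\eeps_0\mapsto\eeps_0-w$:
\[
\eeps_{k+1}=(\II-\HH_k\QQ)(\eeps_0-w)\qquad\text{for every }w\in\mathcal{K}_k.
\]
Now pass to a crude operator-norm bound and \emph{then} minimize over $w$:
\[
\|\eeps_{k+1}\|\ \le\ \|\II-\HH_k\QQ\|\cdot\min_{w\in\mathcal{K}_k}\|\eeps_0-w\|\ =\ \|\II-\HH_k\QQ\|\cdot\min_{p(0)=1,\ \deg p\le k}\|p(\tilde\HH\QQ)\eeps_0\|,
\]
after which the Chebyshev estimate applies directly to the minimum; the factor $\|\II-\HH_k\QQ\|$ is precisely what the $\mathcal{O}$ hides. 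The paper packages this step as an ``invariance under $\vv$'' lemma for the generalized step $(\XX_k-\HH_k\GG_k)\vv$ with $\textbf{1}^T\vv=1$: your telescoping is the choice placing all weight on $\xx_0$, and varying $\vv$ is exactly the freedom to subtract an arbitrary $w\in\mathcal{K}_k$.
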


The proof can be found in Appendix \ref{sec:convergence_quad}. Notice that, for instance, the multisecant Broyden updates \eqref{eq::regular_broyden} or the multisecant BFGS update \citep{schnabel1983quasi} satisfies the assumptions of Theorem~\ref{thm::opt_qN_quad} if $\BBref$ or $\HHref$ are symmetric positive definite matrices (see Appendix \ref{sec:example_qn_methods}). For all these methods, we have $\tilde \HH = \HHref$ (or $\BBref^{-1}$). This indicates that the initialization is crucial, since a good initial approximation of $\QQ^{-1}$ drastically reduces the condition number $\kappa$.

We have now a generic form of qN update, but it raises some important questions. Which practical losses and regularization functions should we use, and what happens if $\lambda$ does not go to zero? The next section addresses the first point by giving an example that extends (limited memory) DFP and multi-secant Broyden methods. Then, we analyse the robustness of the method when $\lambda$ is non-zero.

\section{Robust Symmetric Multisecant Updates} \label{sec:robust_multisecant}

We now extend the BFGS and multisecant Broyden method into the type-II Symmetric Multisecant Update \eqref{eq:smu-2_extended} below, solving the problem \eqref{eq:gqn_t2_precond} in the special case where the loss and the regularization are Frobenius norms. For simplicity, we do not consider any preconditioner here. The method reads
\begin{align}
    \HH_k \hspace{-0.5ex} = \hspace{-0.5ex} \argmin_{\HH=\HH^T} \| \HH\DX-\DG \|_{F}^2 + \frac{\lambda}{2} \| \HH-\HHref \|^2 \label{eq:smu-2_extended} 
\end{align}
and its type-I counterpart is $\BB_k^{-1}$, where
\begin{align}
    \BB_k \hspace{-0.5ex} = \hspace{-0.5ex} \argmin_{\BB=\BB^T} \| \BB\DG-\DX \|_{F}^2 + \frac{\lambda}{2} \| \BB-\BBref \|^2 \label{eq:smu-1_extended} 
\end{align}

\paragraph{Explicit Formula} We now solve problem \eqref{eq:smu-2_extended} efficiently. This is an extension of the \textit{symmetric Procrusted problem} from \citep{Higham88Procrustes}. Indeed, \citet{Higham88Procrustes} solves the problem
\[
    \min_{\ZZ=\ZZ^T} \| \ZZ \AA - \DD \|,
\]
where $\AA$ and $\DD$ are $\R^{d\times m}$ matrices, where $m>d$. In our case, we have $m\ll d$, and an extra regularization term, that makes the update formula more complicated. Fortunately, the matrix-vector multiplication $\ZZ \vv$ can still be done efficiently even in our case, the bottleneck being the computation of the SVD of a thin matrix. The next theorem details the explicit formula to compute $\HH_k$ (and its inverse if one wants to use a type-I method).
\begin{theorem}\label{thm:reguralized_procrustes}
    Consider the Regularized Symmetric Procrustes \eqref{eq:reg_sym_procrustres} problem
    \begin{equation}
        \ZZ_{\star} = \argmin_{\ZZ=\ZZ^T} \| \ZZ \AA - \DD \|^2 + \frac{\lambda}{2}\| \ZZ-\ZZref \|^2, \label{eq:reg_sym_procrustres} \tag{RSP}
    \end{equation}
    where $\ZZref$ is symmetric (otherwise, take the symmetric part of $\ZZref$),  $\ZZ,\,\ZZref \in\R^{d \times d}$, and $\AA,\,\DD\in\R^{d\times m}$, $m\leq d$. Then, the solution $\ZZ_{\star}$ is given by
    \begin{equation} \label{eq:zstar}\tag{Sol-RSP}
        \ZZ_{\star} = \VV_1 \ZZ_1 \VV_1^T + \VV_1\ZZ_2  + \ZZ_2^T\VV_1^T + (\II-\PP)\ZZref(\II-\PP)
    \end{equation}
    where
    \begin{align*}
        [\UU,\Sigma,\VV_1] & = \textbf{SVD}(\AA^T,\,\texttt{'econ'}),\;\; \text{(economic SVD)}\\
        \ZZ_1 &= \SS \odot \left[ \VV_1^T \left( \AA\DD^T + \DD\AA^T + \lambda\ZZref \right)\VV_1\right],\\
        \SS   &= \frac{1}{\Sigma^2 \textbf{1}\textbf{1}^T + \textbf{1}\textbf{1}^T \Sigma^2 + \lambda  \textbf{1}\textbf{1}^T}, \\
        \PP & = \VV_1\VV_1^T,\\
        \ZZ_2 &= (\Sigma^2 + \lambda\II)^{-1}  \VV_1^T(\AA\DD^T\hspace{-1ex}+\hspace{-0.5ex}\lambda \ZZref)(\II-\PP).
    \end{align*}
    The fraction in $\SS$ stands for the element-wise inversion (Hadamard inverse), 
    and the notation $\odot$ stands for the element-wise product (Hadamard product). 
    The inverse $\ZZ_{\star}^{-1}$ reads
    \begin{align} \label{eq:zstar_inv}
        \ZZ_{\star}^{-1} \hspace{-0.5ex} & = \hspace{-0.5ex} \EE\left( \ZZ_1-\ZZ_2\ZZref^{-1}\ZZ_2^T  \right)^{\hspace{-0.5ex}-1}\hspace{-0.5ex}\EE^T + (\II-\PP)\ZZref^{-1}(\II-\PP) \nonumber\\
        \EE & = \VV_1 - (\II-\PP)\ZZref^{-1}\ZZ_2^T. \tag{Inv-RSP}
    \end{align}
\end{theorem}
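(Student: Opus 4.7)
The approach is to reduce \eqref{eq:reg_sym_procrustres} to three decoupled convex subproblems via an SVD-adapted parametrization of the symmetric variable, and then obtain the inverse by a Sherman--Morrison--Woodbury / block-inversion argument. Let $\AA^T = \UU\Sigma\VV_1^T$ be the economic SVD, so $\AA = \VV_1\Sigma\UU^T$, and let $\PP = \VV_1\VV_1^T$ be the orthogonal projector onto $\mathrm{range}(\AA)$. Any symmetric $\ZZ\in\R^{d\times d}$ is uniquely determined by the symmetric block $\ZZ_1 := \VV_1^T\ZZ\VV_1$, the off-diagonal block $\ZZ_2 := \VV_1^T\ZZ(\II-\PP)$, and the symmetric block $\ZZ_3 := (\II-\PP)\ZZ(\II-\PP)$ through $\ZZ = \VV_1\ZZ_1\VV_1^T + \VV_1\ZZ_2 + \ZZ_2^T\VV_1^T + \ZZ_3$. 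Decomposing $\ZZref$ analogously as $\VV_1 R_1\VV_1^T + \VV_1 R_2 + R_2^T\VV_1^T + R_3$, the four blocks live in Frobenius-orthogonal subspaces, giving
\[
\|\ZZ - \ZZref\|^2 \;=\; \|\ZZ_1 - R_1\|^2 + 2\|\ZZ_2 - R_2\|^2 + \|\ZZ_3 - R_3\|^2.
\]
For the data-fit term, $(\II-\PP)\AA = 0$ kills the contributions of $\VV_1\ZZ_2$ and $\ZZ_3$, leaving $\ZZ\AA = \VV_1\ZZ_1\Sigma\UU^T + \ZZ_2^T\Sigma\UU^T$. Multiplying $\ZZ\AA - \DD$ on the right by the orthogonal $\UU$ preserves the norm; writing $\DD\UU = \VV_1\tilde\DD_1 + \tilde\DD_2$ with $\tilde\DD_1 = \VV_1^T\DD\UU$, $\tilde\DD_2 = (\II-\PP)\DD\UU$, one obtains
\[
\|\ZZ\AA - \DD\|^2 \;=\; \|\ZZ_1\Sigma - \tilde\DD_1\|^2 + \|\ZZ_2^T\Sigma - \tilde\DD_2\|^2.
\]
Thus the overall objective splits into three independent convex subproblems, in $\ZZ_1$, $\ZZ_2$, $\ZZ_3$.

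\emph{Solving the subproblems.} The $\ZZ_3$-problem is trivial: $\ZZ_3 = R_3 = (\II-\PP)\ZZref(\II-\PP)$. First-order optimality for $\ZZ_2$ yields $(\Sigma^2 + \lambda\II)\ZZ_2^T = \Sigma\tilde\DD_2 + \lambda R_2^T$, and then the identity $\Sigma\UU^T = \VV_1^T\AA$ (coming from the SVD of $\AA$) reproduces the stated expression for $\ZZ_2$. For $\ZZ_1$ the symmetry constraint matters: working entry-wise in the diagonal basis of $\Sigma$ and pairing $(i,j)$ with $(j,i)$ to enforce $\ZZ_1=\ZZ_1^T$, the first-order condition becomes $(\sigma_i^2 + \sigma_j^2 + \lambda)(\ZZ_1)_{ij} = \sigma_j(\tilde\DD_1)_{ij} + \sigma_i(\tilde\DD_1)_{ji} + \lambda(R_1)_{ij}$, which is precisely the Hadamard-product formula $\SS \odot [\VV_1^T(\AA\DD^T + \DD\AA^T + \lambda\ZZref)\VV_1]$, once one recognizes $\tilde\DD_1\Sigma + \Sigma\tilde\DD_1^T = \VV_1^T(\AA\DD^T + \DD\AA^T)\VV_1$. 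Reassembling the three blocks reproduces \eqref{eq:zstar}.

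\emph{Inverse formula.} Since $\ZZ_\star - \ZZref$ is supported on the columns of $\VV_1$ and of $\ZZ_2^T$, it has rank at most $2m$, so Sherman--Morrison--Woodbury applies. Concretely, I would extend $\VV_1$ to an orthonormal basis $[\VV_1,\VV_2]$, write $\ZZ_\star$ as a $2\times 2$ block matrix in this basis, invert via the Schur complement on the $(2,2)$ block, and translate back using $\VV_2\VV_2^T = \II-\PP$ and $\ZZ_2(\II-\PP) = \ZZ_2$. The cross-terms then telescope into the compact form $\EE(\ZZ_1 - \ZZ_2\ZZref^{-1}\ZZ_2^T)^{-1}\EE^T$ with $\EE = \VV_1 - (\II-\PP)\ZZref^{-1}\ZZ_2^T$, plus the outside-span piece $(\II-\PP)\ZZref^{-1}(\II-\PP)$. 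A cleaner finish, which is probably what I would actually write out, is to take \eqref{eq:zstar_inv} as an ansatz and verify $\ZZ_\star\ZZ_\star^{-1} = \II$ directly, exploiting the identities $\VV_1^T\EE = \II$, $(\II-\PP)\EE = -(\II-\PP)\ZZref^{-1}\ZZ_2^T$, and $\ZZ_2(\II-\PP) = \ZZ_2$. The main obstacle is exactly this bookkeeping: several terms involving $\ZZref^{-1}$ must cancel to leave only the Schur complement and the outside-span projector; organizing the check block by block in the $(\PP,\II-\PP)$ decomposition is the cleanest route I see.
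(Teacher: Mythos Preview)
Your proposal is correct and follows essentially the same route as the paper: an SVD-based change of coordinates splits the symmetric variable into three Frobenius-orthogonal blocks, yielding the three decoupled subproblems (trivial for $\ZZ_3$, regularized least squares for $\ZZ_2$, entry-wise with the symmetry constraint for $\ZZ_1$), and the inverse is then obtained by a $2\times 2$ block inversion / Schur complement in the $[\VV_1,\VV_2]$ basis. The only cosmetic difference is that you work with the projector $\II-\PP$ rather than an explicit $\VV_2$, and you offer direct verification of \eqref{eq:zstar_inv} as an alternative finish; both are equivalent to what the paper does.
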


The type-I update uses the matrix $\ZZ_\star^{-1}$, using $\AA = \DX$ and $\DD = \DG$. The type-II uses instead $\ZZ_\star$, with $\AA = \DG$ and $\DD = \DX$. 

The next proposition shows the complexity of performing one matrix-vector multiplication with $\ZZ_\star$ and its inverse. The bottleneck of the method is the SVD of a $\R^{m\times d}$ matrix, whose complexity is $O(m^2d)$, thus linear in the dimension.

\begin{proposition}
    The complexity of evaluating $\ZZ_\star \vv$ and  $\ZZ_\star^{-1} \vv$ is $O(m^2d)$, assuming $m\ll d$ and that the complexity of $\ZZref\vv$ and $\ZZref^{-1}\vv$ is at most $O(m^2d)$.
\end{proposition}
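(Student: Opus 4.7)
The plan is to go summand-by-summand through the explicit formulas \eqref{eq:zstar} and \eqref{eq:zstar_inv} and show that, with multiplications parenthesized so as to never materialize any $d\times d$ object, each term costs at most $O(m^2 d)$. Throughout, I use that $\VV_1 \in \R^{d\times m}$, the diagonal $\Sigma \in \R^{m\times m}$, $\ZZ_1 \in \R^{m\times m}$, and $\ZZ_2 \in \R^{m\times d}$, so all contractions against these quantities are cheap; the only potentially expensive operations are applications of $\ZZref$ or $\ZZref^{-1}$, which by hypothesis cost at most $O(m^2 d)$.

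For $\ZZ_\star \vv$, I bound each of the four summands in \eqref{eq:zstar} separately. The term $\VV_1 \ZZ_1 \VV_1^T \vv$ is computed right-to-left: $\VV_1^T \vv \in \R^m$ in $O(md)$, then $\ZZ_1(\VV_1^T\vv)$ in $O(m^2)$, then $\VV_1(\cdot)$ in $O(md)$. The two cross terms $\VV_1\ZZ_2 \vv$ and $\ZZ_2^T \VV_1^T \vv$ are handled analogously and cost $O(md)$. For the residual term $(\II-\PP)\ZZref(\II-\PP)\vv$ I exploit $(\II-\PP)\ww = \ww - \VV_1(\VV_1^T \ww)$, which is $O(md)$, sandwiching a single application of $\ZZref$ whose cost is $O(m^2 d)$ by assumption. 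Adding the four bounds gives $O(m^2 d)$.

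For $\ZZ_\star^{-1} \vv$, I proceed similarly on \eqref{eq:zstar_inv}, provided the $m\times m$ central matrix $\MM_\star := \bigl(\ZZ_1 - \ZZ_2\ZZref^{-1}\ZZ_2^T\bigr)^{-1}$ is precomputed once (a one-time setup step after $\DX, \DG$ are formed, and not part of the per-matvec cost counted here). A matvec then goes: compute $\EE^T \vv = \VV_1^T \vv - \ZZ_2\bigl(\ZZref^{-1}\bigl((\II-\PP)\vv\bigr)\bigr)$, where the inner projection is $O(md)$, the application of $\ZZref^{-1}$ is $O(m^2 d)$, and the contraction by $\ZZ_2$ is $O(md)$; multiply the resulting $m$-vector by $\MM_\star$ in $O(m^2)$; and finally apply $\EE$ back into $\R^d$ via $\EE\ww = \VV_1\ww - (\II-\PP)\bigl(\ZZref^{-1}(\ZZ_2^T \ww)\bigr)$, again $O(m^2 d)$. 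The trailing $(\II-\PP)\ZZref^{-1}(\II-\PP)\vv$ is structurally identical to the $\ZZref$ term in the forward case. Summing yields $O(m^2 d)$.

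The main obstacle is a purely bookkeeping one: every line of the derivation must be checked so that no $d\times d$ matrix (in particular $\PP$, or products like $\AA\DD^T$, or an explicit $\ZZref$) is ever formed. The one place where I must be explicit about setup versus per-matvec cost is the precomputation of $\MM_\star$ and of $\ZZref^{-1}$ applied to the $m$ columns of $\ZZ_2^T$; these require $O(m)$ applications of $\ZZref^{-1}$ and are therefore $O(m^3 d)$ as a one-time cost, which is consistent with the proposition since it bounds the per-vector evaluation cost only.
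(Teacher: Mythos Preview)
The paper does not actually supply a proof of this proposition; it is stated immediately after the informal remark that the economic SVD of the thin matrix $\AA^T$ is the bottleneck at cost $O(m^2 d)$. Your term-by-term accounting is correct and fills in the detail the paper omits, so in substance there is nothing to compare against.

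One small bookkeeping point is worth tightening. You claim the cross terms $\VV_1\ZZ_2\vv$ and $\ZZ_2^T\VV_1^T\vv$ cost $O(md)$, which implicitly assumes $\ZZ_2$ is stored as an explicit $m\times d$ matrix. But forming $\ZZ_2$ (and, for the same reason, $\ZZ_1$) requires the block $\VV_1^T\ZZref$, i.e.\ $m$ applications of $\ZZref$, which under the stated hypothesis already costs $O(m^3 d)$. So the one-time $O(m^3 d)$ setup you flag for $\MM_\star$ in the inverse case is not ``the one place'' this arises; it appears in the forward formula too. Alternatively, if you apply $\ZZ_2$ on the fly via its defining expression (one $\ZZref$-matvec per evaluation), the cross terms cost $O(m^2 d)$ rather than $O(md)$, and then no $O(m^3 d)$ setup is needed for the forward direction beyond forming $\ZZ_1$. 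Either route gives the stated $O(m^2 d)$ per-matvec bound. In the regime the paper actually has in mind ($\ZZref$ a scalar multiple of the identity, as in Algorithms~\ref{algo:symmetric_multisecant_qn_type_1_full}--\ref{algo:symmetric_multisecant_qn_type_2_full}), the setup collapses to $O(m^2 d)$ and the SVD is indeed the bottleneck, matching the paper's remark.
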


\paragraph{Robustness} The symmetric multisecant update can be used in two different modes, one that lets $\lambda\rightarrow 0$, the other, biased but more robust, that sets $\lambda > 0$. 

The update formula is slightly simpler when $\lambda = 0$. However, due to the presence of matrix inversion, this may lead to instability issues in some cases, similarly to the BFGS method when
\[
    (\xx_{k+1}-\xx_k)^T(\nabla f(\xx_{k+1})-\nabla f(\xx_k))\approx 0,
\]
i.e., when the step and difference of gradients are close to being orthogonal. In BFGS, such issues are tackled by a filtering step, discarding the update if the scalar product goes below some threshold. Unfortunately, when the gradient is corrupted by some noise, the impact on the BFGS update can be huge. 

In the case where $\lambda > 0$, we can show that our update is robust when $\AA$ and $\DD$ are corrupted.
\begin{proposition}\label{prop:stab}
    Let $\ZZ_\star(\lambda)$ be defined as the solution of \eqref{eq:zstar} for some $\lambda$, and $\ZZ_\star(\lambda) = \lim_{\lambda \rightarrow 0} \ZZ_{\lambda}$. Let $\tilde \AA$, $\tilde \DD $ be a corrupted version of $\AA$ and $\DD$ where
    \[
        \|\AA-\tilde \AA\| \leq \delta_\AA, \quad  \|\DD-\tilde \DD\| \leq \delta_\DD.
    \]
    Finally, let $\tilde \ZZ_\star(\lambda)$ be the solution of \eqref{eq:zstar} using $\tilde \AA$ and $\tilde \CC$. Then, we have
    \[
        \| \tilde \ZZ_\star(\lambda) - \ZZ_\star(0) \| \leq \underbrace{\| \ZZ_\star(\lambda) - \ZZ_\star(0)\|}_{\textbf{Bias}} + \underbrace{\| \tilde \ZZ_\star(\lambda) - \ZZ_\star(\lambda) \|}_{\textbf{Stability}},
    \]
    where
    \begin{align}
    \label{eq:prop_stab1}
        \| \ZZ_\star(\lambda) - \ZZ_\star(0)\| & \leq \frac{\lambda \| \ZZ_{\star}(0)-\ZZref \|}{\sigma_{\min}^2(\AA)+\lambda} ,\\ 
        \label{eq:prop_stab2}
        \| \tilde \ZZ_\star(\lambda) - \ZZ_\star(\lambda) \|  & \leq \mathcal{O}\left(\frac{\delta_\AA + \delta_\DD}{\lambda}\right).
    \end{align}
\end{proposition}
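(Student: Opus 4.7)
The plan is to apply the triangle inequality to split the error into the bias $\|\ZZ_\star(\lambda) - \ZZ_\star(0)\|$ and the stability $\|\tilde\ZZ_\star(\lambda) - \ZZ_\star(\lambda)\|$, then bound each piece separately using the first-order optimality condition of \eqref{eq:reg_sym_procrustres}. Taking the symmetric part of the unconstrained gradient yields the Sylvester-type equation
\begin{equation*}
    \ZZ\AA\AA^T + \AA\AA^T\ZZ + \lambda\ZZ = \DD\AA^T + \AA\DD^T + \lambda\ZZref,
\end{equation*}
whose left-hand side I denote $\mathcal{T}_\lambda^\AA(\ZZ)$. As a self-adjoint operator on symmetric matrices, $\mathcal{T}_\lambda^\AA$ has spectrum $\{\sigma_i^2(\AA) + \sigma_j^2(\AA) + \lambda\}$, hence is positive definite for $\lambda > 0$ with $\|(\mathcal{T}_\lambda^\AA)^{-1}\|_{\mathrm{op}} \leq 1/\lambda$.

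For the bias \eqref{eq:prop_stab1}, I would diagonalize $\AA\AA^T = \UU\Lambda\UU^T$ and pass to the $\UU$-basis. Setting $M = \UU^T\ZZ_\star(\lambda)\UU$, $C = \UU^T(\DD\AA^T + \AA\DD^T)\UU$ and $R = \UU^T\ZZref\UU$, the Sylvester equation decouples entrywise to $(\sigma_i^2 + \sigma_j^2 + \lambda)M_{ij} = C_{ij} + \lambda R_{ij}$. A direct computation then gives
\begin{equation*}
    (M(\lambda) - M(0))_{ij} = \frac{\lambda}{\sigma_i^2 + \sigma_j^2 + \lambda}\bigl(R_{ij} - M(0)_{ij}\bigr),
\end{equation*}
where in the degenerate case $\sigma_i = \sigma_j = 0$ one checks that $C_{ij} = 0$ (since then $\AA^T\uu_i = \AA^T\uu_j = 0$), so $M(0)_{ij} = R_{ij}$ and the difference vanishes. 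Bounding the scalar factor uniformly by $\lambda/(\sigma_{\min}^2(\AA) + \lambda)$ and using unitary invariance of the Frobenius norm delivers \eqref{eq:prop_stab1}.

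For the stability bound \eqref{eq:prop_stab2}, I would write the two Sylvester equations satisfied by $\ZZ_\star(\lambda)$ and $\tilde\ZZ_\star(\lambda)$ (the latter using $\tilde\AA, \tilde\DD$) and subtract them to obtain
\begin{equation*}
    \mathcal{T}_\lambda^{\tilde\AA}\bigl(\tilde\ZZ_\star(\lambda) - \ZZ_\star(\lambda)\bigr) = \bigl(\mathcal{T}_\lambda^\AA - \mathcal{T}_\lambda^{\tilde\AA}\bigr)(\ZZ_\star(\lambda)) + (\tilde\DD\tilde\AA^T - \DD\AA^T) + (\tilde\AA\tilde\DD^T - \AA\DD^T).
\end{equation*}
Applying $(\mathcal{T}_\lambda^{\tilde\AA})^{-1}$, whose operator norm is at most $1/\lambda$, it remains to control the right-hand side: the operator difference satisfies $\|\mathcal{T}_\lambda^\AA - \mathcal{T}_\lambda^{\tilde\AA}\|_{\mathrm{op}} \leq 2\|\AA\AA^T - \tilde\AA\tilde\AA^T\| = O(\delta_\AA)$ via $\AA\AA^T - \tilde\AA\tilde\AA^T = (\AA - \tilde\AA)\AA^T + \tilde\AA(\AA - \tilde\AA)^T$, while $\|\tilde\DD\tilde\AA^T - \DD\AA^T\| = O(\delta_\AA + \delta_\DD)$ by the triangle inequality. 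Combining yields \eqref{eq:prop_stab2}. The main obstacle I anticipate is ensuring that $\|\ZZ_\star(\lambda)\|$ stays uniformly bounded in $\lambda$, so that the constants hidden in the $\mathcal{O}$-notation of \eqref{eq:prop_stab2} do not implicitly blow up; this follows from the bias bound together with the trivial estimate $\|\ZZ_\star(0)\| \leq \|\ZZref\| + \|\ZZ_\star(0) - \ZZref\|$, but should be made explicit in the final write-up.
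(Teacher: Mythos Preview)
Your proposal is correct and, in places, tighter than the paper's own argument. Both proofs split into bias and stability. For the bias, the paper works from the explicit block decomposition of Theorem~\ref{thm:reguralized_procrustes} (the $\ZZ_1$, $\ZZ_2$ and $(\II-\PP)\ZZref(\II-\PP)$ pieces), bounds each block separately via the same Hadamard/entrywise identities you derive, and after triangle-inequality bookkeeping obtains the bound with a constant~$5$; your Sylvester-equation route diagonalises once and gets constant~$1$ directly, matching the statement exactly. For the stability term, the paper vectorises the symmetric constraint to reduce \eqref{eq:reg_sym_procrustres} to an unconstrained regularised least-squares problem and then invokes a generic RLS perturbation lemma; your approach stays at the operator level and uses $\|(\mathcal{T}_\lambda^{\tilde\AA})^{-1}\|_{\mathrm{op}}\le 1/\lambda$. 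These are the same argument in different coordinates (your Sylvester equation is precisely the normal equation of the paper's vectorised problem), so the difference is presentational. Your remark that $\|\ZZ_\star(\lambda)\|$ must be shown uniformly bounded is well taken; the paper hides this in the $\mathcal{O}$-notation as well.
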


This suggests that $\lambda$ should satisfy a trade-off to achieve the best performing approximation. Notice that when $\lambda = 0$ in the noise-less case, we recover the optimal $\ZZ_{\star}$, and when $\lambda \rightarrow\infty$, we have $\ZZ_{\star} = \ZZref$.

Our result is called \textit{robust} as we can bound the maximum perturbation without restriction on its magnitude. This is \textit{not} the case in \citep{Higham88Procrustes}, whose main assumption  is $\delta_\AA \leq \sigma_{\min}(\AA)$ (which is extremely restrictive), where $\sigma_{\min}$ is the smallest non-zero singular value of $\AA$.

Since the singular values of $\AA$ are, in practice, often small, it is always recommended to set a small $\lambda$: we will show latter, in the numerical experiments, that even for quadratic functions (i.e., in the ``perturbation-free regime''), a small value of $\lambda$ drastically changes the final result, as this makes the method robust to numerical noise.

\paragraph{Scaling of $\lambda$.} The parameter $\lambda$ has to be scaled w.r.t.\ the problem input. It is clear, from Theorem \ref{thm:reguralized_procrustes}, that the role of $\lambda$ is to regularize the matrix inversion by lower-bounding the eigenvalues of the inverted matrix. Therefore, we advise to set $\lambda = \bar\lambda \|\AA^T\AA\|_2$, i.e., proportional to $\|\AA^T\AA\|_2$. This way, assuming $\sigma_{\min}$ small, the conditioning of $(\AA^T\AA+\lambda\II)^{-1}$ is upper-bounded by $1+1/\bar\lambda$.

\section{Numerical Experiment}

This section compares our symmetric multisecant algorithms to existing methods in the literature. We present in this section only a few experiments concerning stochastic-related experiments: We first compare the quality of the estimate of the Hessian (and its inverse). Then, we compare the speed of convergence when using this estimate to estimate the Newton-step in the case where the gradient is stochastic.

\paragraph{Hessian Recovery} Consider the problem of recovering the inverse of a symmetric Hessian $\QQ^{-1}$ of a quadratic function, that satisfies
\[
    \QQ^{-1}\DG=\DX, \quad \QQ=\QQ^T.
\]
However, we have only access to $\tilde{\DG}$, a corrupted version of $\DG$. This notably happens when the oracle provides stochastic gradients.

In our case, we consider the worst-case $\ell_2$ corruption
\[
    \tilde{\DG} = \UU_{\DG} \max\{\Sigma_{\DG}-\epsilon\cdot \sigma_{1}(\DG),\,0\} \VV_{\DG}^T,
\]
where $\UU_{\DG} \Sigma_{\DG}\VV_{\DG}^T$ is the SVD of $\DG$, and $\epsilon$ is the relative perturbation intensity. When $\epsilon = 1$, the matrix $\tilde{\DG}$ is full of zeros.

We estimate $\QQ^{-1}$ using different techniques, that we compare using the relative residual error
\[
    \text{error}(\QQ^{-1}_{\text{est}}) = \| \QQ^{-1}_{\text{est}}\DG - \DX  \| / \|\DX\|.
\]
Note that, in our error function, we use the noise-free version of $\DG$.

Our baseline is the diagonal estimate, corresponding to the inverse of the Lipchitz constant of $\QQ$, typically used as a step-size in the gradient method. We compare $\ell$-BFGS, Multisecant Broyden updates \citep{fang2009two} and our Type-1 and Type-2 multisecant algorithms, solving respectively \eqref{eq:zstar_inv} and \eqref{eq:zstar} with $\AA = \tilde{\DG} $, $\DD=\DX$, $\BB_0 = \HH_0^{-1} = \|\QQ\|$. The number of secant equations is $50$ and the dimension of the problem is $250$. The results are reported in Figure \ref{fig:recovery}.

\paragraph{Optimization problem} We aim to solve
\begin{equation}
    \min_{\xx\in\R^{d}} f(\xx) \defas \frac{1}{N}\sum_{i=0}^N \ell(\aa_i^T\xx,\bb_i) + \frac{\tau}{2} \|\xx\|^2, 
\end{equation}
where $\ell(\cdot,\cdot)$ is a loss function. The pair $(\AA,\bb)$ is a dataset, where $\aa_i \in \mathbb{R}^d$ is a data point composed by $d$ features, and $b_i$ is the label of the $i^{th}$ data point.

Here, we present the specific case where $\ell$ is a quadratic loss, on the Madelon \citep{guyon2008feature} dataset, with $\lambda = 10^{-2}\|\AA\|$. We solve it using SAGA \citep{defazio2014saga} stochastic estimates of the gradient, with a batch size of 64. We also have other experiments on other datasets, other losses and also on deterministic estimate of the gradient in Appendix \ref{sec:numerics}. We also show the evolution of the spectrum of $\HH_k$ and $\BB_k^{-1}$ in Figure \ref{fig:spectrum_iterates}, Appendix \ref{sec:numerics}.

\begin{figure}[t!]
    \centering
    \includegraphics[width=1\linewidth]{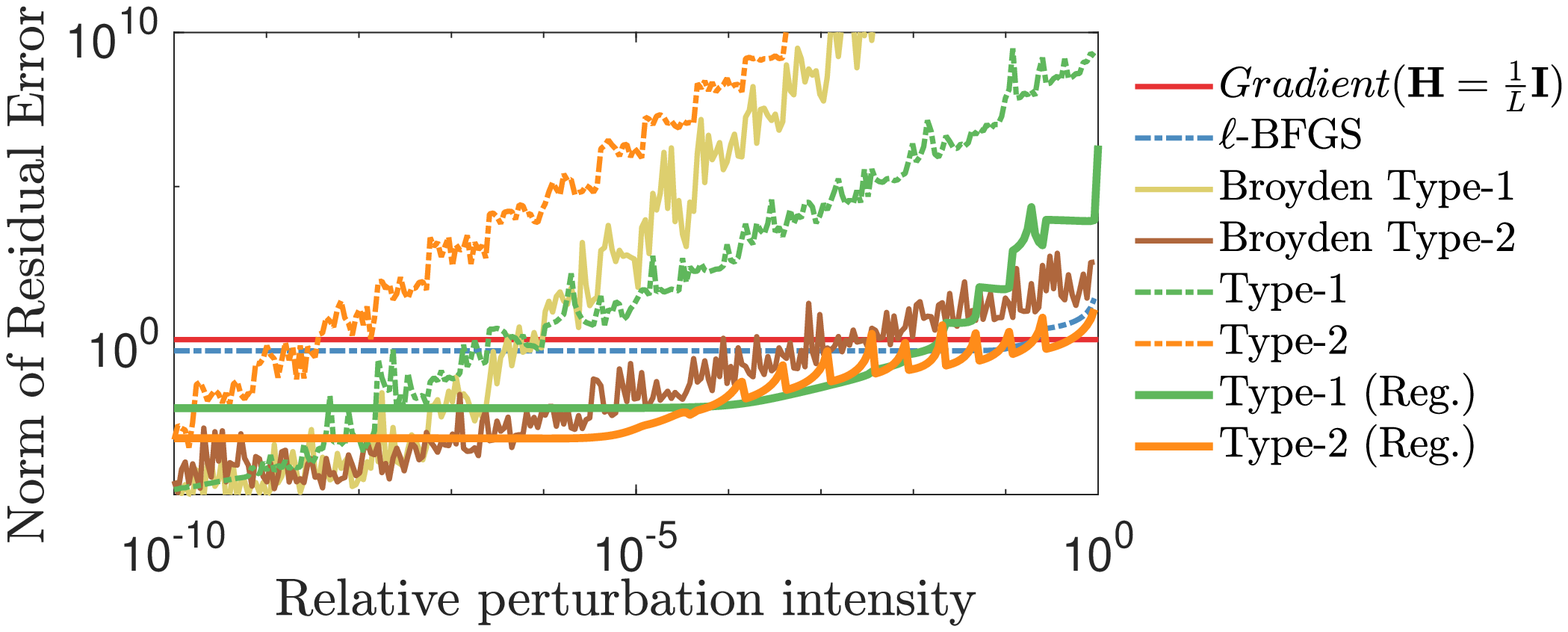}
    \caption{Comparison of different methods to estimate a symmetric matrix. We see that symmetric multisecant methods perform well in a small-noise regime, but quickly get out of control for larger perturbations. This is not the case for their regularized counterpart ($\lambda = 10^{-10}$), clearly showing a more stable behavior. BFGS performs poorly compared to multisecant algorithms, since it can only satisfy one secant equation at a time. Finally, the type-II multisecant Broyden method seems stable, but does not recover a symmetric matrix.}
    \includegraphics[width=0.75\linewidth]{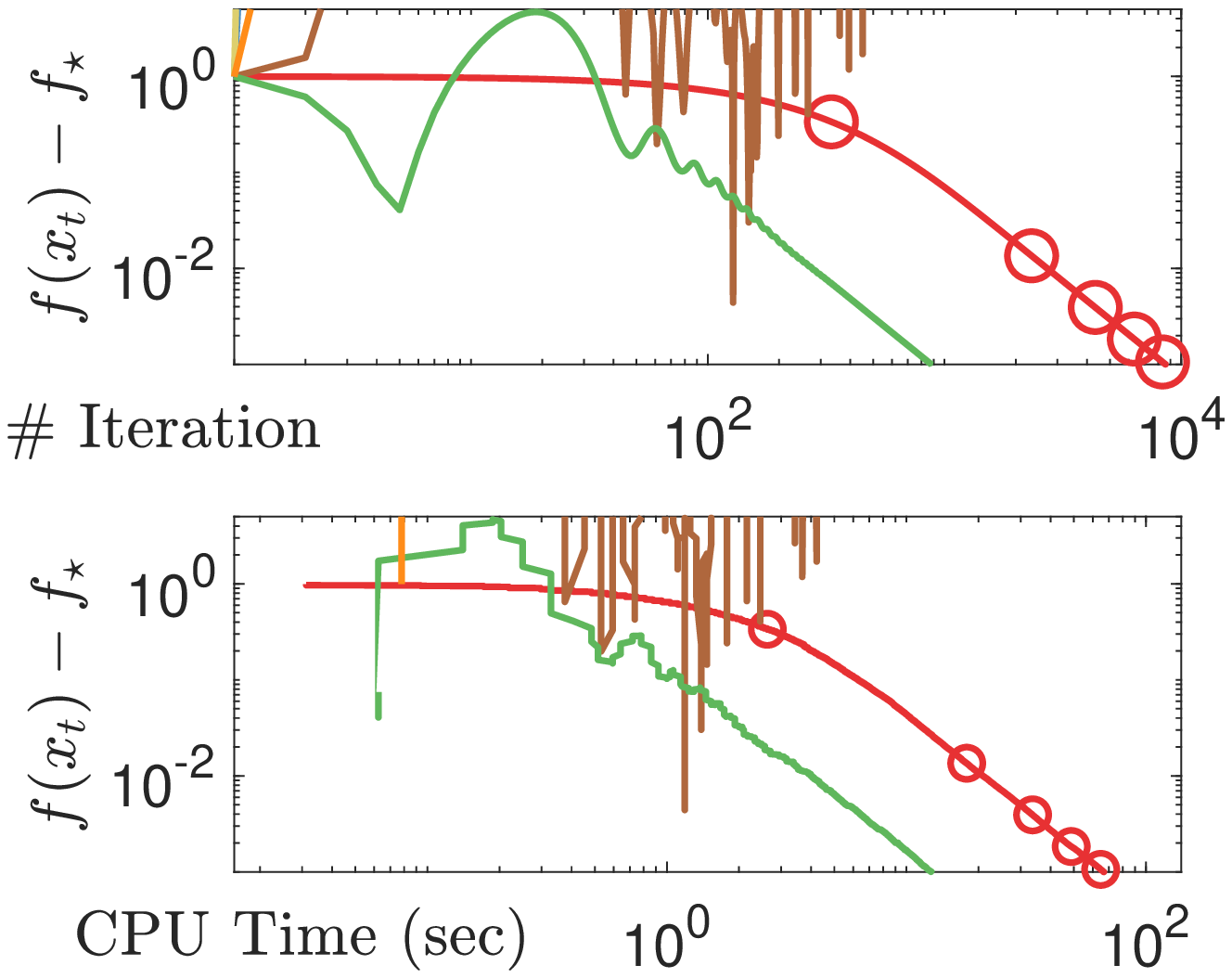}
    \caption{Comparison of the stability of qN methods with stochastic gradients on Madelon dataset. We report the function value of the average of the iterates. The batch size is $64$ points. Since the function is stochastic, we used only unitary stepsizes. The memory is $25$, and the relative regularization $\bar\lambda=10^{-2}$. The condition number is $10^3$. $\ell-$BFGS and the Type-I multisecant Broyden are divergent in this situation. With unitary stepsizes, the regularized symmetric multisecant Type-I method is slightly faster than stochastic gradient.}
    \label{fig:recovery}
\end{figure}

\section{Discussion and Future Directions}

We briefly discuss our contributions and propose possible improvements. Although our approach performs sufficiently well to be competitive with current qN updates, the authors believe the method can be improved in several aspects.

Contrary to BFGS, the update \eqref{eq:smu-1_extended} (resp.\ \eqref{eq:smu-2_extended}) does not guarantee its positive-definiteness when applied to a smooth and strongly convex function. However, for large enough $\lambda$ the matrix is p.s.d.\ given that $\HHref$ (resp.\ $\BBref$) is also positive-definite. Also, it is possible to project a small matrix in \eqref{eq:zstar_inv} (resp.\ \eqref{eq:zstar}) to ensure positive definiteness. We discuss this in more details in Appendix \ref{sec:projection_qn}. The ideal way would be to solve the symmetric Procrustes problem with a semi-definite constraint, but this is still considered as an open problem \citep{Higham88Procrustes}.

A direct consequence of the non positive-definiteness is the lack of robustness guarantees for the Type-I method, that inverts a matrix that is  possibly not positive definite. Therefore, it is probably impossible to bound the smallest eigenvalue, unless we use the robust projection trick in Appendix \ref{sec:projection_qn}. Surprisingly however, in our experiments the Type-I method seems to be the most stable among all updates.

Moreover, we considered here a plain method with \textit{no preconditioner}. In BFGS and DFP updates, the preconditioner $\WW$ is \textit{any} matrix such that $\WW\DX=\DG$ where $\DX$ and $\DG$ are \textit{vectors}. This matrix is used implicitly in the update: all occurrences of $\WW\DX$ are replaced by $\DG$, in a way that $\WW$ disappears. We cannot use a similar trick here, since such matrices do not exist in general when $\DX$ and $\DG$ are matrices \citep{schnabel1983quasi}. We propose in Appendix~\ref{sec:precon_update}
possible options to include such preconditioners that may potentially improve the method.

It is also possible to consider a general qN step, that takes the direction $\HH \GG \vv$ (or $\BB^{-1} \GG \vv$), where $\vv$ is a vector that sums to one, instead of taking the direction  computed with the latest gradient, $\HH \nabla f(\xx_k)$. In the special case where $\vv$ is full of zeros but one as the last element, this reduces to the standards qN step. We discuss this strategy in Appendix \ref{sec:generalized_qn_step}, and we suspect this technique may reduce even more the impact of the noise on the qN step if $\vv$ is chosen to be the averaging vector $\textbf{1}_m/m$, for instance.

The complexity of the method is somewhat worse than current qN methods: $O(m^2d)$ instead of $O(md)$. The authors believe it may be possible to reduce the complexity by a factor $m$ by using a low-rank SVD update \citep{Brand06FastSVD} and by changing our direct formulas in Theorem \ref{thm:reguralized_procrustes} into recursive ones.

Another interesting direction is the study of the the matrix $\CC$ that forms $\DX$ and $\DG$. We suspect that, in the case where those matrices are corrupted, choosing the right $\CC$ may affect the stability of the method. For instance, it is possible to design $\CC$ to set more weight on some selected secant equations that may be more recent, or that contain less noise.

We proposed a novel method with distinct theoretical properties, including symmetry, optimality on quadratics with \textit{unitary stepsize}, and robustness, and which performs encouragingly well in practice. In view of the new questions that multisecant methods raise, we hope our work can add to efforts for the design of possibly other, better-performing \mbox{quasi-Newton} schemes.

\clearpage
{
    \small
    \bibliography{utils/bibliography_file.bib}
}
\clearpage

\appendix

\onecolumn

\section{Robust Symmetric Multisecant Algorithms} \label{sec:multisecant_algo}


\begin{algorithm}[h!t]
  \caption{Type-I Symmetric Multisecant step}\label{algo:symmetric_multisecant_qn_type_1_full}
  \begin{algorithmic}[1]
    \REQUIRE Function $f$ and gradient $\nabla f$, initial approximation of the Hessian $\BBref$, maximum memory $m$ (can be $\infty$), relative regularization parameter $\bar \lambda$.
    \STATE Compute $g_0 = \nabla f(x_0)$ and perform the initial step $\xx_1 = \xx_0-\BBref^{-1} \gg_0$
    \FOR{$t=1,2,\ldots$}
    	\STATE Form the matrices $\DX$ and $\DG$ using the $m$ last pairs $(x_i,\nabla f(x_i))$.
    \STATE Compute the qN direction $\dd$ as $\dd_t = -\BB^{-1} g_t$, where
    \begin{align*}
         \BB^{-1} & = \EE\left( \ZZ_1-\ZZ_2\BBref^{-1}\ZZ_2^T  \right)^{-1}\EE^T + (\II-\PP)\BBref^{-1}(\II-\PP), \\
        [\UU,\bSigma,\VV_1] & = \textbf{SVD}(\DX,\,\texttt{'econ'})\\
        \ZZ_1   &= \SS \odot \left[ \VV_1^T \left( \DX\DG^T + \DG\DX^T + \lambda\BBref \right)\VV_1\right],\\
        \SS     &= \frac{1}{\bSigma^2 \textbf{1}\textbf{1}^T + \textbf{1}\textbf{1}^T \bSigma^2 + \lambda  \textbf{1}\textbf{1}^T}, \\
        \PP     & = \VV_1\VV_1^T,\\
        \ZZ_2   &= (\bSigma^2 + \lambda\II)^{-1}  \VV_1^T(\DX\DG^T+\lambda \ZZref)(\II-\PP)\\
        \EE     & = \VV_1 - (\II-\PP)\ZZref^{-1}\ZZ_2^T.
    \end{align*}
    \STATE Perform an approximate-line search: $\xx_{t+1} = \xx_t + h_t\dd_t, \quad h_t \approx \argmin_h f\big( \xx_t + h_t\dd_t \big)$.
    \ENDFOR
  \end{algorithmic}
\end{algorithm}

\begin{algorithm}[h!t]
  \caption{Type-II Symmetric Multisecant step}\label{algo:symmetric_multisecant_qn_type_2_full}
  \begin{algorithmic}[1]
    \REQUIRE Function $f$ and gradient $\nabla f$, initial approximation of the Hessian $\HHref$, maximum memory $m$ (can be $\infty$), relative regularization parameter $\bar \lambda$.
    \STATE Compute $g_0 = \nabla f(x_0)$ and perform the initial step $\xx_1 = \xx_0-\HHref \gg_0$
    \FOR{$t=1,2,\ldots$}
    	\STATE Form the matrices $\DX$ and $\DG$ using the $m$ last pairs $(x_i,\nabla f(x_i))$.
    \STATE Compute the qN direction $\dd$ as $\dd_t = -\HH^{-1} g_t$, where
    \begin{align*}
        \HH & = \VV_1 \ZZ_1 \VV_1^T + \VV_1\ZZ_2  + \ZZ_2^T\VV_1^T + (\II-\PP)\HHref(\II-\PP),\\
        [\UU,\Sigma,\VV_1] & = \textbf{SVD}(\DG^T,\,\texttt{'econ'}),\\
        \ZZ_1 &= \SS \odot \left[ \VV_1^T \left( \DG\DX^T + \DX\DG^T + \lambda\HHref \right)\VV_1\right],\\
        \SS   &= \frac{1}{\Sigma^2 \textbf{1}\textbf{1}^T + \textbf{1}\textbf{1}^T \Sigma^2 + \lambda  \textbf{1}\textbf{1}^T}, \\
        \PP & = \VV_1\VV_1^T,\\
        \ZZ_2 &= (\Sigma^2 + \lambda\II)^{-1}  \VV_1^T(\DG\DX^T+\lambda \ZZref)(\II-\PP)
    \end{align*}
    \STATE Perform an approximate-line search: $\xx_{t+1} = \xx_t + h_t\dd_t, \quad h_t \approx \argmin_h f\big( \xx_t + h_t\dd_t \big)$.
    \ENDFOR
  \end{algorithmic}
\end{algorithm}

\clearpage
\section{Positive Definite Estimates} \label{sec:projection_qn}

\subsection{Schur Complement and Robust Projection}

We quickly discuss here a strategy to make the estimate $\HH$ or $\BB^{-1}$ positive definite. If we rewrite $\ZZ$ from Theorem \ref{thm:reguralized_procrustes}, we have
\[
    \ZZ_{\star} = \argmin_{\ZZ=\ZZ^T} \| \ZZ \AA - \DD \|_F^2 + \frac{\lambda}{2}\| \ZZ-\ZZref \|_F^2,
\]
where the matrices $\ZZ_2,\; \ZZref, \; \VV_1$ are defined in \ref{thm:reguralized_procrustes}, and the matrix $\PP=\VV_1\VV_1^T$ is a projector. Let $\VV_2$ be the orthonormal complement of $\VV_1$, i.e., $\II-\PP = \VV_2\VV_2^T$. We can write $\ZZ_\star$ as follow,
\[
    \ZZ_\star = \begin{bmatrix}
    \VV_1 | \VV_2
    \end{bmatrix}\begin{bmatrix}
    \ZZ_1 & \ZZ_2\VV_2 \\
    \VV_2^T\ZZ_2^T & \VV_2^T\ZZref\VV_2
    \end{bmatrix}\begin{bmatrix}
    \VV_1 | \VV_2
    \end{bmatrix}^T
\]
By the Schur complement, the matrix is positive semi-definite if and only if
\[
    \VV_2^T\ZZref\VV_2 \succeq 0 \quad \text{and} \quad \ZZ_1 - (\ZZ_2\VV_2)(\VV_2^T\ZZref\VV_2)(\ZZ_2\VV_2)^T \succeq 0
\]
Since $\VV_2^T\VV_2 = \II$, and because we start with a positive definite $\ZZref$, the only condition is $\ZZ_1 \succeq \ZZ_2\ZZref\ZZ_2^T$. The matrix $\ZZ_1$ is small ($m\times m$) and symmetric, therefore the projection of its eigenvalues to ensure the positive definiteness is cheap.

To project the matrix, let the variable $\bChi$ and $\bChi_0 = \ZZ_1 - \ZZ_2\ZZref\ZZ_2^T$. We have to solve
\[
    \min_{\bChi} \| \bChi - \bChi_0 \|_F \quad s.t. \;\;\bChi \succeq \sigma \II.
\]
This way, we ensure that $\ZZ \succeq \sigma$. Let $\UU \Lambda \UU^T$ the eigenvalue decomposition of $\bChi_0$. the solution $\bChi_\star$ reads
\[
    \bChi_{\star} = \UU \max\{ \Lambda, \, \sigma \II \} \UU^T \quad \text{ (maximum element-wise)}.
\]
We retrieve the modified matrix $\ZZ_1^{+}$ as 
\[
    \ZZ_1^{\sigma} = \bChi_{\star} + \ZZ_2\ZZref\ZZ_2^T.
\]
We call this projection "robust" as we project the matrix s.t. the eigenvalues of $\ZZ$ are strictly positive, if $\sigma > 0$.

\subsection{Robust Positive Definite Type-I Multisecant Update}

We propose here a Robust version of the Multisecant Type-I update. The major stability problem in the  Type-I update is the lack of guarantee that the eigenvalues of $\ZZ$ (i.e., $\BB$) are away from zero. This means, when we will invert $\ZZ$, the eigenvalues of the matrix can be arbitrarily large. On the other side, large eigenvalues of $\ZZ$ are not a problem, since after inversion they will be very close to zero. That means we do not need to compute a regularized version of $\ZZ$, i.e., we do not need to set $\lambda > 0$ to compute $\ZZ$. 

All together, we propose the following strategy: We compute all required matrices to form $\ZZ_\star^{-1}$, but can replace the matrix $\ZZ_1$ by $\ZZ_1^{\sigma}$. This controls the norm of $\ZZ_\star^{-1}$, and ensure its positive definiteness. We let the detailed analysis of the robustness of the method for future work.

\subsection{Robust Positive Definite Type-II Multisecant Update}

Here, the idea is simpler. As we already have the robustness property, it suffice to use the matrix $\ZZ_1^{\sigma}$ directly in the update formula of $\ZZ_{\star}$. Again, we let the detailed analysis of this method for future work.

\clearpage
\section{Preconditioned Updates}\label{sec:precon_update}

We discuss in this section several strategies for the choice of the preconditioner $\WW$, presented in Section \ref{sec:preconditioner}. We present here the example for the Type-II method, but everything also applies to the Type-I. We recall that the preconditioner matrix $\WW$ is an estimate of the Hessian, and is applies as follow,
\[
    \MM = \WW^{\alpha} \HH \WW^{(1-\alpha)}
\]
Then, we solve the problem with $\WW^{-\alpha}\DX$ instead of $\DX$, and with $\WW^{(1-\alpha)} \DG$ instead of $\DG$. The estimate $\HH$ is then recovered by solving $\HH = \WW^{-\alpha}\MM\WW^{(\alpha-1)}$.

\subsection{Last estimate}
Since we have computed all matrices $\ZZ_1,\, \ZZ_2,\, \ldots$ for form $\HH_{k-1}$, it is easy to form $\WW=\HH_{k-1}$ and $\WW^{-1} = \HH_{k-1}^{-1}$ to create $\HH_k$, given Theorem \ref{thm:reguralized_procrustes}. Since we only have access to $\HH$ or $\HH^{-1}$, we have to set $\alpha = 1$ or $\alpha = 0$.

\subsection{Successive Preconditioning}

As before, we can use the information stored in the secant equation to compute the preconditioner $\WW$. However, instead of using the previous secant equation, we use the current ones. We have two possibilities here: we can either use the Type-I approximation to compute $\WW$, or the type-II, then compute $\HH$ with this preconditioner. For each of these possibilities, we can use $\WW$ on the left, or the right of $\HH$. At the end, we have 4 possibilities:
\begin{align*}
    \WW = \texttt{Procrustes}(\DX,\, \DG, \, \HHref),\quad \HH & = \texttt{Procrustes}(\WW^{-1}\DG,\, \DX, \, \HHref\WW) \WW  & \text{(Type-I, $\alpha = 0$)},\\
    \HH & = (\WW)^{-1}\texttt{Procrustes}(\DG,\, \WW\DX, \, \WW\HHref)   & \text{(Type-I, $\alpha = 1$)},\\
    \WW^{-1} = \texttt{InvProcrustes}(\DG,\, \DX, \, \HHref^{-1}),\quad \HH & = \texttt{Procrustes}(\WW^{-1}\DG,\, \DX, \, \HHref\WW) \WW  & \text{(Type-II, $\alpha = 0$)},\\
    \HH & = (\WW)^{-1}\texttt{Procrustes}(\DG,\, \WW\DX, \, \WW\HHref)   & \text{(Type-II, $\alpha = 1$)}.\\
\end{align*}

In fact, we can iteratively compute several $\WW$ (since the SVD is already computed, it's only a matter of matrix-vector multiplications). We give here the example of the Type-I, $\alpha = 0$ preconditioner,
\[
    \WW_{i} = \texttt{Procrustes}(\WW_{i-1}^{-1}\DX,\, \DG, \, \HHref\WW_{i-1})\WW_{i-1} \; \text{or} \; \WW_{i} = \WW_{i-1}^{-1}\texttt{Procrustes}(\DX,\, \WW_{i-1}\DG, \, \WW_{i-1}\HHref).
\]

We do not know if this process is convergent, or if it is useful to do several iteration to find the preconditioner. We let these investigations as future work.

\subsection{Semi-Implicit Preconditioning}

We discuss here a semi-implicit strategy, inspired by the preconditioner of BFGS and DFP. Indeed, we assume that there exist a matrix $\WW$ such that
\[
    \WW\DX = \DG.
\]
In such case, we have 4 possibilities for the preconditioned secant equations,
\begin{align*}
    (\WW \HH) \DG = \WW\DX,\\
    (\HH \WW)\WW^{-1} \DG = \DX,\\
    (\WW^{-1} \BB) \DX = \WW^{-1}\DG,\\
    (\BB \WW^{-1})\WW \DX = \DG,
\end{align*}
which gives, if we use the implicit property of $\WW$,
\begin{align*}
    (\WW \HH) \DG = \DG,\\
    (\HH \WW) \DX = \DX,\\
    (\WW^{-1} \BB) \DX = \DX.\\
    (\BB \WW^{-1})\DG = \DG,
\end{align*}

We give here the example when $\WW$ multiplies the secant equation on the left. We left the full study for future work.

\begin{theorem}\label{thm::QN}
The solution of the Type-II semi-implicit preconditioned update is given by
\begin{equation}
    \min_{\HH=\HH^T}\| \WW(\HH-\HHref) \|  \quad \text{s.t.} \quad \WW\HH\DG = \DG \label{eq:preconditioned_problem}
\end{equation}
where $\DG$ is a full column-rank matrix and $\HHref$ a symmetric matrix is given by 
\begin{equation}
    \HH = \WW^{-1}\DG \TT_1^{-1}\DG^T\WW^{-1} + (\II-\PP_1)^T \HHref(\II-\PP_1)
\end{equation}
where
\begin{align*}
    \TT_1 = \DG^T\WW^{-1}\DG, \qquad \text{and} \quad \PP_1 = \DG \TT^{-1}_1\DG^T\WW^{-1} \text{ is a projector}.
\end{align*}
The Type-I solves instead
\[
    \min_{\BB=\BB^T}\| \WW^{-1}(\BB-\BBref) \|  \quad \text{s.t.} \quad \WW^{-1}\BB\DX = \DX,
\]
whose inverse reads
\[
    \BB^{-1} = \DX\TT_2^{-1}\DX^T + \BBref^{-1} - \BBref^{-1}\WW\DX (\DX^T\WW\BBref^{-1}\WW\DX)^{-1}\DX^T\WW\BBref^{-1},
\]
where 
\[
    \TT_2 = \DX^T\WW\DX.
\]
\end{theorem}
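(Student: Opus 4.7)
The plan is to reduce both statements to the standard Frobenius projection of a symmetric matrix onto an affine subspace cut out by one matrix equation, for which the projector is known in closed form, and then to transport the answer back through a symmetric similarity change of variable. The only preliminary subtlety is interpretive: for the stated formulas to be minimizers, the norm $\|\WW(\HH-\HHref)\|$ in \eqref{eq:preconditioned_problem} is to be read as the weighted norm $\|\HH-\HHref\|_{\WW} = \|\WW^{1/2}(\HH-\HHref)\WW^{1/2}\|$ from Section \ref{sec:notations}, and similarly $\|\WW^{-1}(\BB-\BBref)\|$ as $\|\BB-\BBref\|_{\WW^{-1}}$; a direct computation on a $2{\times}2$ example shows that a literal Frobenius reading would give a different minimizer.

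For the Type-II claim I would introduce $\tilde\HH = \WW^{1/2}\HH\WW^{1/2}$, $\tilde\HHref = \WW^{1/2}\HHref\WW^{1/2}$, and $\tilde\DG = \WW^{-1/2}\DG$. Symmetry is preserved, the objective becomes $\|\tilde\HH-\tilde\HHref\|$, and the constraint $\WW\HH\DG=\DG$ rewrites as $\tilde\HH\tilde\DG = \tilde\DG$; the problem becomes the Frobenius projection of $\tilde\HHref$ onto $\{\tilde\HH = \tilde\HH^T : \tilde\HH\tilde\DG = \tilde\DG\}$. With $\tilde\PP = \tilde\DG(\tilde\DG^T\tilde\DG)^{-1}\tilde\DG^T$ the orthogonal projector onto $\Span\{\tilde\DG\}$, the minimizer is $\tilde\HH_\star = \tilde\PP + (\II-\tilde\PP)\tilde\HHref(\II-\tilde\PP)$: symmetry and feasibility are immediate, and optimality holds because $\tilde\HHref - \tilde\HH_\star = -\tilde\PP + \tilde\PP\tilde\HHref + \tilde\HHref\tilde\PP - \tilde\PP\tilde\HHref\tilde\PP$ is a sum of terms of the form $\Phi\tilde\DG^T + \tilde\DG\Phi^T$, which is Frobenius-orthogonal to every symmetric $\tilde\Delta$ with $\tilde\Delta\tilde\DG=0$. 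Transporting back via $\HH_\star = \WW^{-1/2}\tilde\HH_\star\WW^{-1/2}$ and using $\tilde\DG^T\tilde\DG = \DG^T\WW^{-1}\DG = \TT_1$ together with the identity $\WW^{-1/2}(\II-\tilde\PP) = (\II-\PP_1)^T\WW^{-1/2}$, one obtains $\WW^{-1/2}\tilde\PP\WW^{-1/2} = \WW^{-1}\DG\TT_1^{-1}\DG^T\WW^{-1}$ and $\WW^{-1/2}(\II-\tilde\PP)\tilde\HHref(\II-\tilde\PP)\WW^{-1/2} = (\II-\PP_1)^T\HHref(\II-\PP_1)$, reproducing exactly the stated formula.

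For the Type-I claim the mirror substitution $\tilde\BB = \WW^{-1/2}\BB\WW^{-1/2}$, $\tilde\BBref = \WW^{-1/2}\BBref\WW^{-1/2}$, $\tilde\DX = \WW^{1/2}\DX$ turns the constraint $\WW^{-1}\BB\DX = \DX$ into $\tilde\BB\tilde\DX = \tilde\DX$ and the objective into $\|\tilde\BB-\tilde\BBref\|$, and the same projection gives $\tilde\BB_\star = \tilde\PP' + (\II-\tilde\PP')\tilde\BBref(\II-\tilde\PP')$ with $\tilde\PP' = \tilde\DX\TT_2^{-1}\tilde\DX^T$. Back-substitution together with $\WW\DX = \DG$ collapses the first term to $\DG\TT_2^{-1}\DG^T$, giving $\BB_\star = \DG\TT_2^{-1}\DG^T + (\II-\QQ)\BBref(\II-\QQ^T)$ with $\QQ = \DG\TT_2^{-1}\DX^T$. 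To invert $\BB_\star$ in the claimed form, I would expand $\BB_\star - \BBref$ and match coefficients to write it as a rank-$2m$ update $\UU C \VV^T$ with $\UU = [\DG,\,\BBref\DX]$, $\VV^T = \begin{pmatrix}\DG^T\\ \DX^T\BBref\end{pmatrix}$, and
\[
C = \begin{pmatrix}\TT_2^{-1}+\TT_2^{-1}\DX^T\BBref\DX\,\TT_2^{-1} & -\TT_2^{-1}\\ -\TT_2^{-1} & 0\end{pmatrix}.
\]
Then the Sherman-Morrison-Woodbury identity gives $\BB_\star^{-1} = \BBref^{-1} - \BBref^{-1}\UU(C^{-1}+\VV^T\BBref^{-1}\UU)^{-1}\VV^T\BBref^{-1}$, and the key computation is that the $\TT_2$ cross-blocks of $\VV^T\BBref^{-1}\UU$ exactly cancel the $-\TT_2^{-1}$ entries of $C^{-1}$, so $C^{-1}+\VV^T\BBref^{-1}\UU = \diag\bigl(\DG^T\BBref^{-1}\DG,\,-\TT_2\bigr)$. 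Expanding the resulting product and replacing $\DG$ by $\WW\DX$ throughout produces the theorem's formula for $\BB_\star^{-1}$.

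The main obstacle is the Sherman-Morrison-Woodbury bookkeeping for the Type-I inverse: picking the right rank-$2m$ decomposition and verifying the block-diagonal collapse of $C^{-1}+\VV^T\BBref^{-1}\UU$ is where the algebra is most delicate. A secondary but essential point is clarifying the meaning of the weighted norms in the statement; without this, the symmetric similarity substitution does not reduce the problem to a standard Frobenius projection, and the rest of the argument does not go through.
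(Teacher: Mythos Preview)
The paper states this theorem without proof, so there is nothing to compare against directly. Your argument is correct and follows the standard route for such results: reduce to an unweighted Frobenius projection onto the affine set of symmetric matrices satisfying a single linear constraint via the symmetric similarity $\HH\mapsto\WW^{1/2}\HH\WW^{1/2}$ (respectively $\BB\mapsto\WW^{-1/2}\BB\WW^{-1/2}$), solve that projection in closed form with the orthogonal projector $\tilde\PP$, and transport back. The intertwining identities $\WW^{-1/2}(\II-\tilde\PP)=(\II-\PP_1)^T\WW^{-1/2}$ and $(\II-\tilde\PP)\WW^{-1/2}=\WW^{-1/2}(\II-\PP_1)$ that you use are exactly what is needed.

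Your interpretive remark is also correct and essential: as literally written, $\|\WW(\HH-\HHref)\|$ does \emph{not} coincide with the weighted norm $\|\HH-\HHref\|_\WW$ of Section~\ref{sec:notations}, and for non-diagonal $\WW$ the two objectives have different minimizers over the symmetric feasible set; only the weighted-norm reading produces the formula claimed in the theorem. This is worth flagging explicitly.

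For the Type-I inverse, your Woodbury bookkeeping checks out. With $\UU=\VV=[\DG,\BBref\DX]$ and your $C$, one finds
\[
C^{-1}=\begin{pmatrix}0&-\TT_2\\-\TT_2&-\TT_2-\DX^T\BBref\DX\end{pmatrix},
\]
and because $\DG^T\DX=\DX^T\WW\DX=\TT_2$ (this uses $\WW=\WW^T$) the off-diagonal blocks of $C^{-1}+\VV^T\BBref^{-1}\UU$ cancel, leaving $\diag(\DG^T\BBref^{-1}\DG,\,-\TT_2)$. Expanding then gives
\[
\BB_\star^{-1}=\DX\TT_2^{-1}\DX^T+\BBref^{-1}-\BBref^{-1}\DG(\DG^T\BBref^{-1}\DG)^{-1}\DG^T\BBref^{-1},
\]
which is the stated formula after $\DG=\WW\DX$. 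The only small additions worth making explicit are the standing hypotheses $\WW\succ 0$ (so $\WW^{1/2}$ exists and the change of variable preserves symmetry) and $\WW=\WW^T$ (used in the identity $\DG^T\DX=\TT_2$); both are implicit in the paper's preconditioner setup but are needed in your argument.
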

The major problem here is to obtain the matrix $\WW$ or $\WW^{-1}$, which can be approximated using one of the two techniques presented in the previous subsections. Moreover, it would be interesting to consider a robust version of the preconditioned update.

\clearpage
\section{Generalized qN step} \label{sec:generalized_qn_step}

We describe here the generalized qN update (Algorithm \ref{algo:generalized_qN_direction}) and qN step (Algorithm \ref{algo:generalized_qN_step}).

\begin{algorithm}[ht]
  \caption{Generalized qN direction}\label{algo:generalized_qN_direction}
  \begin{algorithmic}[1]
    \REQUIRE Matrices $\DG$, $\DX$, regularization $\lambda$, reference matrices $\HHref=\BBref^{-1}$, direction $\ww$.
    \STATEx \textbf{Parameters:} Loss function $\calL$, Regularization function $\calR$, constraint set $\calC$.
    \STATE Solve the problem
    \begin{align*}
        \BB & = \argmin_{\BB\in\calC} \calL\left(\BB\DX,\,\DG\right) + \frac{\lambda}{2}\calR\left(\BB,\, \BBref\right) & \text{(Type-I)} \\
        \HH & = \argmin_{\HH \in \calC} \calL\left(\HH\DG,\,\DX\right) + \frac{\lambda}{2}\calR\left(\HH, \,\HHref\right) &  \text{(Type-II)}
    \end{align*}
    \ENSURE qN direction $\dd = \BB^{-1}\ww$ or $ \dd = \HH\ww$.
  \end{algorithmic}
\end{algorithm}

\begin{algorithm}[ht]
  \caption{Generalized qN step}\label{algo:generalized_qN_step}
  \begin{algorithmic}[1]
    \REQUIRE Sequence of $m+1$ pairs iterates-gradient
    \[
        \{(\xx_0,\,\gg_0),\;(\xx_1,\,\gg_1), \ldots, (\xx_m,\,\gg_m)\}, \qquad \text{where} \;\;\gg_i = \nabla f(\xx_i).
    \]
    \STATEx \textbf{Parameters:} Matrix of differences $\CC\in\R^{m+1,m}$ of rank $m$, vector of coefficients $\vv\in\R^{m+1}$, such that
    \[
        \textbf{1}_{m+1}^T\CC = 0, \qquad \vv^T\textbf{1}_{n+1} = 1.
    \]
    \STATE Form the matrices $\DX$ and $\DG$ as
    \[
        \DX = \XX \CC, \quad \DG = \GG\CC.
    \]
    \STATE Form the gradient direction $\ww$ as
    \[
        \ww = \GG \vv
    \]
    \STATE Call Algorithm \ref{algo:generalized_qN_direction} with $\DG,\,\DX,\ww$ (and other parameters), and retrieve the qN direction $\dd$.
    \STATE Form the next iterate $\xx_+$ using approximate line-search,
    \[
        \xx_{+} = \XX \vv - h^* \dd, \quad \text{where} \;\; h^* \approx \argmin_{h} f(\XX\vv - h\dd).
    \]
  \end{algorithmic}
\end{algorithm}

Algorithm \ref{algo:generalized_qN_step} is inspired by the fact that, if $\QQ$ is the true hessian such that
\[
    \QQ^{-1} \GG = \XX-\XX_\star, \quad where \;\; \XX_\star = \xx_{\star}\textbf{1}^T,
\]
when, if $\HH\approx \QQ^{-1}$ (equivalently $\BB0^{-1}\approx \QQ^{-1}$), we have
\[
    \XX - \HH \GG \approx \XX_\star.
\]
Multiplying both size by $\vv$, where $\vv^T\textbf{1} = 1$, we have $\XX_\star \vv = \xx_\star$ and
\[
    \underbrace{(\XX - \HH \GG)\vv  = \XX\vv - \HH\ww}_{\text{Generalized qN step}} \approx \xx_\star.
\]
\clearpage
\section{Convergence analysis on quadratics} \label{sec:convergence_quad}

We now analyze the convergence speed of the generalized qN step (Algorithm \ref{algo:generalized_qN_step}) when applied on a quadratic function.

\subsection{Setting}

\paragraph{Objective function.} We consider the minimization problem
\begin{align} \label{eq:quadratic_function}
    \min_{\xx} f(\xx) \defas \frac{1}{2}(\xx-\xx_\star)^T\QQ(\xx-\xx_\star) + f_{\star}.
\end{align}
Notice that \ref{eq:quadratic_function} is equivalent to $f(\xx) = \xx^T\QQ\xx + b^T\xx + c$, but the notation in \eqref{eq:quadratic_function} is more convenient. Since the function $f$ is quadratic, we have the following relations,
\begin{equation} \label{eq:relation_dx_dg_quadratics}
    \QQ\DX = \DG, \quad \QQ(\XX-\XX_{\star}) = \GG.
\end{equation}

\paragraph{Algorithm.} We consider the algorithm
\begin{equation}
    \xx_{k+1} = (\XX_k -\HH_k \GG_k)\vv_k, \qquad \text{where} \quad \XX_k = [x_0,\,\ldots,\, x_k], \quad \GG_k = [g_0,\,\ldots,\, g_k], \quad \vv_k : \vv_k^T\textbf{1}_{k+1} = 1, \label{eq:generalized_qn_step}
\end{equation}
and $\HH_k$ is formed by Algorithm \ref{algo:generalized_qN_direction}.

\paragraph{Assumptions} We assume
\begin{itemize}
    \item The spectrum of the true Hessian $\QQ$ is bounded by $\ell \II \preceq \QQ \preceq L\II$, $0<\ell<L$.
    \item (Simplifying assumption) We use only the notation $\HH_k$ for the approximation of the inverse of the Hessian at the iteration $k$, in opposition to making the distinction between $\HH_k$ and $\BB_k^{-1}$.
    \item We assume that the qN approximation satisfies \textit{exactly} the secant equations, i.e.,
    \[
        \HH_k \DG _k= \DX_k.
    \]
    \item The qN method is used with \textit{full memory}, i.e., $\XX_k$ contains all iterates from $0$ to $k$ and grows indefinitely.
    \item The matrices $\DX$ and $\DG$ are full column rank.
\end{itemize}

\subsection{Generic formula of \textbf{H}}

In the case where $\HH$ satisfies exactly the secant equation, the generic formula of $\HH$ reads
\begin{equation} \label{eq:generic_formula}
    \HH_k = \DX_k \DG_k^\dagger + \tilde \Theta_k (\II-\PP_k), \quad \PP_k = \DG_k\DG_k^\dagger,
\end{equation}
where $\Theta_k$ is a matrix that depends on the initialization $\HHref$, the constraints set $\calC$ and the regularization function $\calR$ (but not on the loss since $\HH$ satisfies exactly the secant equations). The notation $\DG_k^\dagger$ is any left pseudo-inverse of $\DG$ that satisfies
\[
    \DG_k^\dagger\DG_k = \textbf{I}_k,
\]
which exists since $\DG_k$ is full column rank. The matrix $\PP$ is a projector such that $\PP \DG = \DG$ and $\PP^2 = \PP$, which is \textit{not} symmetric because it's not an orthonormal projection (unlike most projection matrices). Finally, the matrix $\tilde \HH$ depends on the initialization and constraints of the qN method.

Indeed, if $\HH_k$ satisfies \eqref{eq:generic_formula}, we have that $\HH_k$ satisfies the secant equations since
\[
    \HH \DG = \DX_k \underbrace{\DG_k^\dagger\DG}_{=\II} + \Theta_k \underbrace{(\II-\PP_k)\DG}_{=0} = \DX.
\]

\subsection{Independence of \textbf{v}}

We first show that the generalized qN step \eqref{eq:generalized_qn_step} is (surprisingly) \textit{independent} of the choice of $\vv$. We omit the subscript $k$ in this section for simplicity.

\begin{proposition}[Invariance under $\vv$] \label{prop:invariance_v}
    Let $\tilde \xx_{+}$ and $\xx_+$ be formed by \eqref{eq:generalized_qn_step} using resp. $\tilde \vv$ and $\vv$. Then, $\tilde \xx = \xx$.
\end{proposition}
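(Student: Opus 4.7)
The plan is to show that any two admissible weight vectors $\vv$ and $\tilde\vv$ differ by a vector in the column span of $\CC$, and that the secant equations $\HH\DG = \DX$ kill exactly such differences. This makes the argument essentially a one-line calculation after a brief dimension-counting observation.

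First, I would record the setup. Both vectors satisfy $\vv^T \textbf{1}_{m+1} = \tilde\vv^T \textbf{1}_{m+1} = 1$, so $(\vv - \tilde\vv)^T \textbf{1}_{m+1} = 0$, i.e.\ $\vv - \tilde\vv$ lies in the null space of $\textbf{1}_{m+1}^T$. This null space has dimension $m$ in $\R^{m+1}$. On the other hand, $\CC \in \R^{(m+1)\times m}$ has rank $m$ and satisfies $\textbf{1}_{m+1}^T \CC = 0$, so the columns of $\CC$ form a basis for this null space. Consequently there exists $\aa \in \R^m$ with
\[
    \vv - \tilde\vv = \CC \aa.
\]

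Next I would combine this with the secant identity. Using the definitions $\DX = \XX\CC$ and $\DG = \GG\CC$, and the standing assumption $\HH \DG = \DX$, I compute
\begin{align*}
    \xx_+ - \tilde\xx_+
    &= (\XX - \HH\GG)(\vv - \tilde\vv) \\
    &= (\XX - \HH\GG)\CC \aa \\
    &= (\XX\CC - \HH\GG\CC)\aa \\
    &= (\DX - \HH\DG)\aa = 0.
\end{align*}
This is exactly the claim $\xx_+ = \tilde\xx_+$.

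I do not expect any real obstacle here; the only subtle point is verifying that the columns of $\CC$ span $\ker(\textbf{1}_{m+1}^T)$, which follows immediately from the rank-$m$ assumption on $\CC$ stated in the convergence setup. Everything else is a direct use of the secant equation, and no properties of the specific construction of $\HH$ (beyond satisfying $\HH\DG = \DX$) are needed, which explains why the invariance holds across all instantiations of the generalized qN framework in the paper.
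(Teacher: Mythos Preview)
Your proof is correct and follows essentially the same approach as the paper: write $\vv-\tilde\vv=\CC\aa$ using that the columns of $\CC$ span $\ker(\textbf{1}_{m+1}^T)$, then use $\HH\DG=\DX$ to conclude $(\XX-\HH\GG)\CC\aa=0$. The paper's argument is identical up to notation.
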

\begin{proof}
    We first write the difference between $\xx_+$ and $\tilde \xx_+$,
    \[
        \xx_+-\tilde \xx_+ = (\XX -\HH \GG)\underbrace{(\vv - \tilde \vv )}_{\Delta \vv}.
    \]
    However, $\Delta \vv = \vv-\tilde \vv$ is a vector that sum to $0$. Since $\CC$ is a matrix such that
    \[
        \textbf{1}^T \CC = 0, \quad \CC\text{ is full column rank},
    \]
    this means $\CC$ is a basis for all vectors that sum to zero. Therefore, there exists a vector of coefficients $\balpha$ such that $\CC\balpha = \Delta\vv$. Rewriting the difference, we obtain
    \[
        \xx_+-\tilde \xx_+ = (\XX -\HH \GG)\CC\balpha.
    \]
    However, $\GG\CC = \DG$ and $\XX\CC = \DX$. Since $\HH\DG = \DX$, the difference is zero, which prove the statement.
\end{proof}

\subsection{Krylov subspace structure of the iterates}

Before proving the rate of convergence of the qN step, we show that the iterates follows a Krylov structure.
\begin{proposition} \label{prop:span_krylov}
    Assume that, for all $i=0\ldots k$, we have
    \[
        \xx_{i} \in \xx_0 + \tilde \HH \Span\{ \nabla f(\xx_0),\ldots, \nabla f(\xx_{i-1}) \}.
    \]
    In such case,
    \[
        \xx_{i} - \xx_\star \in \xx_0-\xx_\star + \Span\{\tilde \HH \QQ (\xx_0-\xx_\star), (\tilde \HH \QQ)^2 (\xx_0-\xx_\star), \ldots, (\tilde \HH \QQ)^{i-1} (\xx_0-\xx_\star) \}
    \]
\end{proposition}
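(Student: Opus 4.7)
My plan is to prove Proposition \ref{prop:span_krylov} by induction on $i$, using the key algebraic fact that for the quadratic objective \eqref{eq:quadratic_function} we have $\nabla f(\xx) = \QQ(\xx - \xx_\star)$ for every $\xx$. This identity is exactly what converts ``gradients at the iterates'' into ``powers of $\tilde\HH \QQ$ applied to the initial error $\xx_0 - \xx_\star$''.

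For the base case $i = 1$, the hypothesis gives $\xx_1 \in \xx_0 + \tilde\HH\,\Span\{\nabla f(\xx_0)\}$, and since $\nabla f(\xx_0) = \QQ(\xx_0 - \xx_\star)$ we immediately conclude $\xx_1 - \xx_\star \in (\xx_0 - \xx_\star) + \Span\{\tilde\HH\QQ(\xx_0 - \xx_\star)\}$, as required.

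For the inductive step, assume the conclusion holds for every $j = 1, \ldots, i-1$. Then for each such $j$ we may write
\[
    \xx_j - \xx_\star = (\xx_0 - \xx_\star) + \sum_{\ell=1}^{j-1} c_{j,\ell}\,(\tilde\HH\QQ)^\ell(\xx_0 - \xx_\star)
\]
for some scalars $c_{j,\ell}$. Applying $\QQ$ on the left yields
\[
    \nabla f(\xx_j) = \QQ(\xx_0 - \xx_\star) + \sum_{\ell=1}^{j-1} c_{j,\ell}\,\QQ(\tilde\HH\QQ)^\ell(\xx_0 - \xx_\star),
\]
and then applying $\tilde\HH$ gives $\tilde\HH\,\nabla f(\xx_j) \in \Span\{(\tilde\HH\QQ)^\ell(\xx_0-\xx_\star) : 1\le \ell \le j\}$. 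Taking the union over $j = 0, 1, \ldots, i-1$, we obtain
\[
    \tilde\HH\,\Span\{\nabla f(\xx_0), \ldots, \nabla f(\xx_{i-1})\} \subseteq \Span\{(\tilde\HH\QQ)^\ell(\xx_0-\xx_\star) : 1\le \ell \le i-1\}.
\]
Combining this with the assumption $\xx_i - \xx_0 \in \tilde\HH\,\Span\{\nabla f(\xx_0),\ldots,\nabla f(\xx_{i-1})\}$ and writing $\xx_i - \xx_\star = (\xx_0 - \xx_\star) + (\xx_i - \xx_0)$ closes the induction.

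I do not expect a real obstacle here: the proof is a textbook Krylov-subspace argument, and the quadratic structure makes everything explicit. The only thing to be careful about is bookkeeping the index shift (the $\xx_0 - \xx_\star$ affine offset lives outside the span, and multiplying by $\QQ$ bumps the degree up by one while multiplying by $\tilde\HH$ keeps it), which is handled cleanly by induction.
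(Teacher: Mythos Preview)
Your proof is correct and is essentially the same argument as the paper's: both proceed by induction on $i$, using the quadratic identity $\nabla f(\xx)=\QQ(\xx-\xx_\star)$ to turn each $\tilde\HH\,\nabla f(\xx_j)$ into a combination of powers of $\tilde\HH\QQ$ applied to $\xx_0-\xx_\star$. The paper simply writes out $i=0,1,2$ explicitly and then says ``repeat the process up to $i$,'' whereas you package the same computation as a formal inductive step.
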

\begin{proof}
    We prove the result iteratively. For $i=0$, we have
    \[
        \xx_0-\xx_\star = \II (\xx_0-\xx_\star).
    \]
    For $i=1$,
    \[
        \xx_1-\xx_\star \in \xx_0-\xx_\star + \tilde \HH\Span\{ \nabla f(\xx_0) \}
    \]
    Since $\nabla f(\xx_0) = \QQ (\xx_0-\xx_{\star})$,
    \[
        \xx_1-\xx_\star\in \xx_0-\xx_\star + \tilde \HH\Span\{ \QQ (\xx_0-\xx_{\star}) \} \in\xx_0-\xx_\star +  \Span\{ \tilde \HH\QQ (\xx_0-\xx_{\star}) \}.
    \]
    For $i=2$,
    \begin{align*}
        \xx_1-\xx_\star &\in \xx_0-\xx_\star + \tilde \HH\Span\{ \QQ (\xx_0-\xx_{\star}), \,  \QQ (\xx_1-\xx_{\star})\} \\
        & \in \xx_0-\xx_\star + \tilde \HH\Span\{ \QQ (\xx_0-\xx_{\star}), \,  \QQ \left( \xx_0-\xx_\star +  \Span\{ \tilde \HH\QQ (\xx_0-\xx_{\star}\}) \right)\}\\
        & \in \xx_0-\xx_\star + \tilde \HH\Span\{ \QQ (\xx_0-\xx_{\star}), \,  \QQ \left( \tilde \HH\QQ (\xx_0-\xx_{\star}) \right)\}\\
        & \in \xx_0-\xx_\star +  \Span\{ \tilde\HH\QQ (\xx_0-\xx_{\star}), \,  \tilde\HH\QQ \left( \tilde \HH\QQ (\xx_0-\xx_{\star}) \right)\}\\
        & \in \xx_0-\xx_\star +  \Span\{ \tilde\HH\QQ (\xx_0-\xx_{\star}), \,  (\tilde \HH\QQ)^2 (\xx_0-\xx_{\star}) \}
    \end{align*}
    We can repeat the process up to $i$.
\end{proof}

\subsection{Rate of convergence}

We now analyse the rate of convergence of algorithm \eqref{eq:generalized_qn_step} in term of the distance to the solution. 

\begin{theorem} \label{thm:rate_conv}
    Assume that, for all $i=0\ldots k$, we have
    \[
        \xx_{i} \in \xx_0 + \tilde \HH \Span\{ \nabla f(x_0),\ldots, \nabla f(x_{i-1}) \}.
    \]
    Moreover, assume that 
    \[
        \tilde \HH\QQ \text{ is psd, and } \kappa = \frac{\|\tilde \HH \QQ\|}{\|(\tilde \HH \QQ)^{-1}\|} \text{ is bounded.}
    \]
    In such case, the accuracy of the $k-th$ qN step is bounded by
    \[
        \|\xx_k-\xx_\star\| \leq \|\II-\HH_k\QQ\|  \left(\frac{1-\sqrt{\kappa^{-1}}}{1+\sqrt{\kappa^{-1}}}\right)^k \|\xx_0-\xx_\star\|
    \]
\end{theorem}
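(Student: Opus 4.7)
The plan is to couple the Krylov-subspace inclusion of Proposition~\ref{prop:span_krylov} with the $\vv$-invariance of Proposition~\ref{prop:invariance_v}, reducing the convergence analysis to the classical minimax Chebyshev bound for polynomials on the spectrum of $\tilde\HH\QQ$. Expanding the quasi-Newton step \eqref{eq:generalized_qn_step} and applying the quadratic identity $\QQ(\XX_{k-1}-\XX_\star)=\GG_{k-1}$ from \eqref{eq:relation_dx_dg_quadratics} yields, for every $\vv$ with $\textbf{1}^T\vv=1$, the representation
\begin{equation*}
    \xx_k - \xx_\star \;=\; (\II-\HH_{k-1}\QQ)\,(\XX_{k-1}-\XX_\star)\vv .
\end{equation*}
By Proposition~\ref{prop:invariance_v} the left-hand side is independent of $\vv$, so I may choose $\vv$ freely to make the right-hand side small. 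Passing to operator norms,
\begin{equation*}
    \|\xx_k-\xx_\star\| \;\leq\; \|\II-\HH_{k-1}\QQ\|\cdot \min_{\vv\colon \textbf{1}^T\vv=1} \|(\XX_{k-1}-\XX_\star)\vv\|.
\end{equation*}

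Next, Proposition~\ref{prop:span_krylov} lets me write $\xx_i-\xx_\star = p_i(\tilde\HH\QQ)(\xx_0-\xx_\star)$ with $p_i(0)=1$ and $\deg p_i \leq i$, so that $(\XX_{k-1}-\XX_\star)\vv = q_\vv(\tilde\HH\QQ)(\xx_0-\xx_\star)$, where $q_\vv := \sum_i v_i p_i$ satisfies $q_\vv(0)=1$ and $\deg q_\vv \leq k-1$. Assuming the Krylov iteration is non-degenerate, so that $\{p_i\}_{i=0}^{k-1}$ are affinely independent, the attainable family of $q_\vv$ sweeps the entire affine set $\mathcal{P} := \{q\colon \deg q \leq k-1,\ q(0)=1\}$. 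A standard Chebyshev minimax computation on the interval $[\lambda_{\min}(\tilde\HH\QQ),\lambda_{\max}(\tilde\HH\QQ)]$ then yields
\begin{equation*}
    \min_{q\in\mathcal{P}}\|q(\tilde\HH\QQ)\| \;\leq\; \Big(\tfrac{1-\sqrt{\kappa^{-1}}}{1+\sqrt{\kappa^{-1}}}\Big)^{k-1},
\end{equation*}
and substituting back recovers the stated rate (up to the mild $k \leftrightarrow k-1$ and $\HH_k \leftrightarrow \HH_{k-1}$ indexing conventions in the theorem statement).

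The main obstacle is justifying the affine independence of $\{p_i\}_{i=0}^{k-1}$. The standard workaround is finite termination: if the Krylov sequence degenerates at some step $j<k$, then $\nabla f(\xx_j)=0$ and $\xx_j=\xx_\star$ already, so the bound holds trivially for all subsequent iterates. A secondary subtlety is that the product $\tilde\HH\QQ$ is not itself symmetric; however, under the hypothesis that $\tilde\HH\QQ$ be p.s.d.\ it is similar (via $\QQ^{1/2}$) to the symmetric matrix $\QQ^{1/2}\tilde\HH\QQ^{1/2}$, so the Chebyshev polynomial bound still applies after a controlled similarity transformation whose constant can be absorbed into the $\|\II-\HH_{k-1}\QQ\|$ prefactor or the initial-error term.
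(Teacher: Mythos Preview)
Your proposal is correct and follows essentially the same approach as the paper: expand the step via $\QQ(\XX-\XX_\star)=\GG$, use Proposition~\ref{prop:invariance_v} to optimize freely over $\vv$, apply Proposition~\ref{prop:span_krylov} to reduce to a Krylov polynomial in $\tilde\HH\QQ$, and conclude with the Chebyshev minimax bound. Your remarks on finite termination in the degenerate case and on the similarity $\tilde\HH\QQ \sim \QQ^{1/2}\tilde\HH\QQ^{1/2}$ are refinements the paper itself glosses over (it simply asserts $\GG$ full rank and assumes $\tilde\HH\QQ$ symmetric in the proof), and your observation about the $k\leftrightarrow k-1$ indexing is a genuine inconsistency present in the paper's own statement versus its proof.
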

\begin{proof}
    If we expand the expression, we obtain
    \begin{align}
        \xx_{k+1}   & =  \left( \XX_k - \HH_k \GG_k\right)\vv - \xx_{\star}, \nonumber \\
                    & = \left( \II - \HH_k \GG_k\right)\vv_k, \nonumber\\
                    & = \left( \II - \HH_k\QQ\right) \left( \XX_k - \XX_\star \right) \vv_k. \label{grad_qn_step}
    \end{align}
    By Proposition \ref{prop:invariance_v}, we can take any $\vv_k$ such that $\vv^T\textbf{1} = 1$. In particular, we chose $\vv_k=\vv^\star_k$ such that
    \[
        \vv_k^\star \defas \argmin_{\vv : \vv^T\textbf{1} = 1} \| \left( \XX_k - \XX_\star \right)\vv \|_2^2
    \]
    Therefore,
    \[
        \| \nabla f(\xx_{k+1}) \| \leq \| \II - \HH_k\QQ \| \|\left( \XX_k - \XX_\star \right) \vv_k\| =  \| \II - \HH_k \| \cdot  \min_{\vv:\vv^T\textbf{1}=1}\|\left( \XX_k - \XX_\star \right) \vv\|.
    \]
    By definition of $\left( \XX_k - \XX_\star \right)$, we have
    \[
        \left( \XX_k - \XX_\star \right)\vv_k = \left( \sum_{i=0}^k \vv_i\left(\xx_0-\xx_\star + \Span\{ \nabla f(\xx_0),\ldots,\nabla f(\xx_i) \}  \right)  \right).
    \]
    Since $\vv$ sum to one,
    \[
        \left( \XX_k - \XX_\star \right)\vv_k = \xx_0-\xx_\star + \left( \sum_{i=0}^k \vv_i \Span\{ \nabla f(\xx_0),\ldots,\nabla f(\xx_i) \}    \right).
    \]
    By definition of a $\Span$,
    \[
        \left( \XX_k - \XX_\star \right)\vv_k \in  \xx_0-\xx_\star + \Span\{ \nabla f(\xx_0),\ldots,\nabla f(\xx_i) \}.
    \]
    By Proposition \ref{prop:span_krylov},
    \[
        \left( \XX_k - \XX_\star \right)\vv_k \in  \xx_0-\xx_\star + \Span\{\tilde \HH \QQ (\xx_0-\xx_\star), (\tilde \HH \QQ)^2 (\xx_0-\xx_\star), \ldots, (\tilde \HH \QQ)^{i-1} (\xx_0-\xx_\star) \}.
    \]
    Notice that, because $\GG$ is full rank the $\Span$ is a basis, therefore there is a one-to-one correspondence between the span and $\vv_k$ (i.e., there exists a unique vector $\vv_k$ such that $\vv_k^T\textbf{1}=1$ such that $(\XX_k-\XX_\star)\vv_k$ is a vector of the $\Span$). Using the definition of the $\Span$,
    \[
        \left( \XX_k - \XX_\star \right)\vv_k = \Pi_k(\tilde\HH\QQ)(\xx_0-\xx_\star), \quad \Pi_k \text{ is a polynomial of degree at most $k$, such that }\Pi_k(0) = 1.
    \]
    Therefore,
    \[
        \| \nabla f(\xx_{k+1}) \| \leq \| \II - \HH_k\QQ \|   \cdot \min_{\Pi : \deg(\Pi) \leq k,\, \Pi(0)=1} \left\|\Pi_k(\tilde\HH\QQ)(\xx_0-\xx_\star)\right\|
    \]
    Now, assume that $\tilde \HH \QQ$ is symmetric, p.s.d., and let $\kappa$ be its condition number, i.e.,
    \[
        \kappa = \frac{\|\tilde \HH \QQ\|}{\|(\tilde \HH \QQ)^{-1}\|}.
    \]
    Then, standard result from Krylov subspace gives the bound
    \[
         \min_{\Pi : \deg(\Pi) \leq k,\, \Pi(0)=1} \left\|\Pi_k(\tilde\HH\QQ)(\xx_0-\xx_\star)\right\| \leq \left(\frac{1-\sqrt{\kappa^{-1}}}{1+\sqrt{\kappa^{-1}}}\right)^k \|\xx_0-\xx_\star\|,
    \]
    for $k \leq d$, and converges exactly to 0 when $k \geq d$, which prove the statement.
\end{proof}

\subsection{Example of qN method satisfying the assumptions} \label{sec:example_qn_methods}

We show here that standard qN method satisfies the assumptions of Theorem \ref{thm:rate_conv}. We first show a simpler condition for the method that ensure it satisfies the assumptions of Theorem \ref{thm:rate_conv}.

\begin{proposition}\label{prop:simple_condition}
    Let $\HH$ be any matrix that satisfies the secant equation, which means
    \[
        \HH = \DX\DG^{\dagger} + \Theta (\II-\PP), \quad \DG^{\dagger} : \DX\DG^{\dagger}\DG = \DX, \quad \PP : \PP\DG = \DG.
    \]
    If 
    \[
        \Theta(\II-\PP)\GG\vv \in \tilde{\HH}  \Span\{\GG\},
    \]
    then $\xx_+ \in \xx_0 + \tilde \HH \Span \{ \GG \}$. Moreover, if $\tilde \HH$ is symmetric positive definite then the method satisfies the assumption of Theorem \ref{thm:rate_conv}.
\end{proposition}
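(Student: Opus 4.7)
The plan is to proceed by induction on the iteration index, using the generic decomposition $\HH = \DX\DG^{\dagger} + \Theta(\II-\PP)$ together with the structural fact $\textbf{1}^{T}\CC = 0$ (which is what guarantees that the columns of $\DX = \XX\CC$ are linear combinations of \emph{differences} of iterates). The base case is $\xx_0 = \xx_0 + \tilde\HH \cdot 0$, which is trivially in $\xx_0 + \tilde\HH\Span\{\GG\}$.

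For the inductive step, suppose $\xx_i - \xx_0 \in \tilde\HH\Span\{\GG\}$ for all $i \leq k$, and write the next iterate as $\xx_{k+1} = \XX\vv - \HH\GG\vv$, where $\vv^T\textbf{1}=1$ (here $\XX,\GG$ stack the first $k+1$ iterates/gradients). I would split $\xx_{k+1} - \xx_0$ into two contributions:
\begin{align*}
    \xx_{k+1} - \xx_0 \;=\; \underbrace{(\XX\vv - \xx_0)}_{(A)} \;-\; \underbrace{\HH\GG\vv}_{(B)}.
\end{align*}
Since $\vv^T\textbf{1}=1$, term $(A)$ equals $\sum_i v_i(\xx_i-\xx_0)$, and by the induction hypothesis each $\xx_i-\xx_0$ lies in $\tilde\HH\Span\{\GG\}$, so $(A)\in\tilde\HH\Span\{\GG\}$. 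For $(B)$, substitute the generic formula:
\begin{align*}
    \HH\GG\vv \;=\; \DX\,(\DG^{\dagger}\GG\vv) \;+\; \Theta(\II-\PP)\GG\vv.
\end{align*}
The second summand lies in $\tilde\HH\Span\{\GG\}$ directly by the hypothesis of the proposition. For the first, note $\DX\,(\DG^{\dagger}\GG\vv) = \XX\CC\boldsymbol{\alpha}$ for $\boldsymbol{\alpha}=\DG^{\dagger}\GG\vv$, and since $\textbf{1}^{T}\CC=0$ the vector $\CC\boldsymbol{\alpha}$ has coordinates summing to zero. Hence $\XX\CC\boldsymbol{\alpha}$ is a linear combination of the differences $\xx_i-\xx_0$ (equivalently, $\XX\CC\boldsymbol{\alpha} = \XX(\CC\boldsymbol{\alpha}) - \xx_0\,\textbf{1}^T(\CC\boldsymbol{\alpha})$, where the second term vanishes), and the induction hypothesis again places this in $\tilde\HH\Span\{\GG\}$. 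Combining, $\xx_{k+1} - \xx_0 \in \tilde\HH\Span\{\GG\}$, which closes the induction and proves the first claim.

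For the second claim, the induction above in fact gives the sharper statement that for each $i\leq k$, $\xx_i - \xx_0 \in \tilde\HH\Span\{\nabla f(\xx_0),\ldots,\nabla f(\xx_{i-1})\}$, matching the hypothesis of Theorem~\ref{thm:rate_conv}. It then remains to check the spectral requirement on $\tilde\HH\QQ$: when $\tilde\HH$ is symmetric positive definite, $\tilde\HH\QQ$ is similar to the symmetric positive definite matrix $\tilde\HH^{1/2}\QQ\tilde\HH^{1/2}$, so it has real positive spectrum and a well-defined finite condition number $\kappa$, which is exactly what Theorem~\ref{thm:rate_conv} needs.

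The main obstacle I anticipate is the treatment of $\DX\DG^{\dagger}\GG\vv$: superficially $\DG^{\dagger}\GG\vv$ is an arbitrary vector, so one has to exploit the column-sum-zero property of $\CC$ to recognize that multiplying $\XX$ by $\CC\boldsymbol{\alpha}$ produces a combination of \emph{differences}, which is what makes the induction go through. Everything else (the decomposition of $\HH$, the invariance of the step under $\vv$, and the spectral check) is essentially bookkeeping on top of structure already established earlier in the appendix.
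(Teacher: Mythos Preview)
Your proposal is correct and follows essentially the same approach as the paper: both arguments proceed by induction, both use the generic decomposition $\HH=\DX\DG^{\dagger}+\Theta(\II-\PP)$, and both hinge on $\textbf{1}^{T}\CC=0$ to ensure the $\DX\DG^{\dagger}\GG\vv$ contribution is a combination of differences of iterates. The only cosmetic difference is that the paper first merges $\XX\vv-\DX\DG^{\dagger}\GG\vv$ into $\XX\ww$ with $\textbf{1}^{T}\ww=1$ and then invokes the induction hypothesis once, whereas you treat $(\XX\vv-\xx_0)$ and $\DX\DG^{\dagger}\GG\vv$ separately; your additional remark on the spectral condition for $\tilde\HH\QQ$ is also a clean way to justify the final sentence of the proposition.
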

\begin{proof}
    We start by expanding the generalized qN step,
    \begin{align*}
        \xx_{+} & = \XX\vv - \DX\DG^{\dagger}\GG\vv - \Theta(\II-\PP)\GG\vv \\
        & = \XX\underbrace{\left( \II - \CC\DG^{\dagger}\GG\right)\vv}_{=\ww} - \Theta(\II-\PP)\GG\vv \\
        & = \XX\ww - \Theta(\II-\PP)\GG\vv.
    \end{align*}
    Notice that $\textbf{1}^T\ww = 1$, since
    \[
        \textbf{1}^T\ww = \textbf{1}^T \left( \II - \CC\DG^{\dagger}\GG\right)\vv = \underbrace{\textbf{1}^T\vv}_{=1} - \underbrace{\textbf{1}^T\CC}_{=0}\DG^{\dagger}\GG\vv.
    \]
    We now show the property recursively. The property is true at $\xx_0$, and assume it's true up to $k$. Therefore,
    \[
        \XX\ww = \XX_k\ww_k = \sum_{i=0}^{k} \ww_i\xx_i \in \underbrace{\sum_{i=0}^{k} \ww_i}_{=1}\xx_0 + \sum_{i=0}^{k} \ww_i\tilde \HH\Span\{\GG_{i-1}\} \quad \text{(recursivity assumption)},
    \]
    Which means $\XX\ww \in \xx_0 + \tilde \HH \Span\{\GG_{k-1}\} $. Therefore, if $\Theta(\II-\PP)\GG\vv \in \tilde{\HH}  \Span\{\GG\}$, we have $\xx_+ \in \xx_0 + \tilde \HH \Span \{ \GG \}$.
\end{proof}

\subsubsection{Multisecant Broyden Type-I}

\paragraph{TL;DR} The method satisfies Theorem \ref{thm:rate_conv} if $\BBref$ is symmetric positive definite.

The Multisecant Broyden Type-I reads
\[
    \BB^{-1} = \BB_0^{-1} + (\DX-\BB_0^{-1}\DG)(\DX^T\BB_0^{-1}\DG)^{-1}\DX^T\BB_0^{-1}
\]
After reorganization,
\[
    \BB^{-1} = \DX \DG^{\dagger} + \BB_0^{-1}(\II\DG\DG^{\dagger}), \quad \DG^{\dagger} = (\DX^T\BB_0^{-1}\DG)^{-1}\DX^T\BB_0^{-1}.
\]
We clearly identity $\Theta(\II-\PP) = \BBref^{-1}(\II-\DG\DG^{\dagger})$. After expansion,
\begin{align*}
    \Theta(\II-\PP)\GG\vv = \BBref^{-1}(\II-\DG\DG^{\dagger})\GG\vv & = \BBref^{-1}\GG(\II-\CC\DG^{\dagger}\GG)\vv,\\
    & = \BBref^{-1}\GG \tilde \vv,\\
    & \in \BBref^{-1} \Span\{ \GG \}.
\end{align*}
Defining $\tilde \HH = \BBref^{-1}$, we have $\Theta(\II-\PP)\GG\vv \in \tilde \HH \Span\{\GG\}$. If $\HHref$ is full rank, symmetric and positive definite, then by Proposition \ref{prop:simple_condition} the method satisfies Theorem \ref{thm:rate_conv}.

\subsubsection{Multisecant Broyden Type-II}

\paragraph{TL;DR} The method satisfies Theorem \ref{thm:rate_conv} if $\HHref$ is symmetric positive definite.

The Multisecant Broyden Type-II update reads
\[
    \HH = \DX\DG^{\dagger} + \HHref(\II-\DG\DG^{\dagger}).
\]
We clearly identity $\Theta(\II-\PP) = \HHref(\II-\DG\DG^{\dagger})$. After expansion,
\begin{align*}
    \Theta(\II-\PP)\GG\vv = \HHref(\II-\DG\DG^{\dagger})\GG\vv & = \HHref\GG(\II-\CC\DG^{\dagger}\GG)\vv,\\
    & = \HHref\GG \tilde \vv,\\
    & \in \HHref \Span\{ \GG \}.
\end{align*}
Defining $\tilde \HH = \HHref$, we have $\Theta(\II-\PP)\GG\vv \in \tilde \HH \Span\{\GG\}$. If $\HHref$ is full rank, symmetric and positive definite, then by Proposition \ref{prop:simple_condition} the method satisfies Theorem \ref{thm:rate_conv}.

\subsubsection{Multisecant BFGS for quadratics}

\paragraph{TL;DR} The method satisfies Theorem \ref{thm:rate_conv} if $\HHref$ is symmetric positive definite.

The multisecant BFGS for quadratics reads
\[
    \HH = \DX\DG^{\dagger} + \DX(\DG^{\dagger})^T(\II-\PP) + (\II-\PP)^T\HHref(\II-\PP) ,\quad \DG^{\dagger} = (\DX^T\DG)^{-1}\DX^T,
\]
which is symmetric if and only if $\DX^T\DG$ is a symmetric matrix. Notice that this reduces to the standard BFGS update when $\DX$ and $\DG$ are vectors. We identify $\Theta(\II-\PP)$ as
\[
    \Theta(\II-\PP) = \left(\DX(\DG^{\dagger})^T + (\II-\PP)^T\HHref\right)(\II-\PP).
\]
After expanding $\PP$,
\[
    \Theta(\II-\PP) = \left(\HHref + \DX\left((\DG^{\dagger})^T- \DG^\dagger\HHref)^T\right)\right)(\II-\PP).
\]
Since $\DX$ already belong to the span, it suffices to show
\[
    \HHref (\II-\PP) \GG\vv \in \tilde \HH \Span\{\GG\}.
\]
Following the same technique as before, we have $\tilde \HH = \HHref$. Therefore, the methods satisfies the assumptions if $\HHref$ is symmetric and positive definite.

\clearpage





\section{Symmetric Procrustes Problem}
Consider the following problem, known as Symmetric Procrustes.

\begin{theorem}\label{thm:reguralized_procrustes_app}
    Consider the Regularized Symmetric Procrustes \eqref{eq:reg_sym_procrustres} problem
    \begin{equation}
        \ZZ_{\star} = \argmin_{\ZZ=\ZZ^T} \| \ZZ \AA - \DD \|^2 + \frac{\lambda}{2}\| \ZZ-\ZZref \|^2, \label{eq:reg_sym_procrustres_app} \tag{RSP}
    \end{equation}
    where $\ZZref$ is symmetric (otherwise, take the symmetric part of $\ZZref$),  $\ZZ,\,\ZZref \in\R^{d \times d}$, and $\AA,\,\DD\in\R^{d\times m}$, $m\leq d$, $\lambda>0$. Then, the solution $\ZZ_{\star}$ is given by
    \begin{equation} \label{eq:zstar_app}\tag{Sol-RSP}
        \ZZ_{\star} = \VV_1 \ZZ_1 \VV_1^T + \VV_1\ZZ_2  + \ZZ_2^T\VV_1^T + (\II-\PP)\ZZref(\II-\PP)
    \end{equation}
    where
    \begin{align*}
        [\UU,\bSigma,\VV_1] & = \textbf{SVD}(\AA^T,\,\texttt{'econ'}),\;\; \text{(economic SVD)}\\
        \ZZ_1 &= \SS \odot \left[ \VV_1^T \left( \AA\DD^T + \DD\AA^T + \lambda\ZZref \right)\VV_1\right],\\
        \SS   &= \frac{1}{\bSigma^2 \textbf{1}\textbf{1}^T + \textbf{1}\textbf{1}^T \bSigma^2 + \lambda  \textbf{1}\textbf{1}^T}, \\
        \PP & = \VV_1\VV_1^T,\\
        \ZZ_2 &= (\bSigma^2 + \lambda\II)^{-1}  \VV_1^T(\AA\DD^T\hspace{-1ex}+\hspace{-0.5ex}\lambda \ZZref)(\II-\PP)
    \end{align*}
    The fraction in $\SS$ stands for the element-wise inversion (Hadamard inverse). The inverse $\ZZ_{\star}^{-1}$ reads
    \begin{align} \label{eq:zstar_inv_app}
        \ZZ_{\star}^{-1} \hspace{-0.5ex} & = \hspace{-0.5ex} \EE\left( \ZZ_1-\ZZ_2\ZZref^{-1}\ZZ_2^T  \right)^{\hspace{-0.5ex}-1}\hspace{-0.5ex}\EE^T + (\II-\PP)\ZZref^{-1}(\II-\PP) \nonumber\\
        \EE & = \VV_1 - (\II-\PP)\ZZref^{-1}\ZZ_2^T. \tag{Inv-RSP}
    \end{align}
\end{theorem}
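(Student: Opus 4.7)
The plan is to exploit the SVD of $\AA^T$ to work in an orthonormal basis in which the problem decouples into blocks. Let $\VV_2 \in \R^{d\times(d-m)}$ complete $\VV_1$ so that $[\VV_1\mid\VV_2]$ is orthogonal; then $\PP = \VV_1\VV_1^T$ and $\II - \PP = \VV_2\VV_2^T$. Parametrize any symmetric $\ZZ$ as
\[
    \ZZ = [\VV_1\mid\VV_2]\begin{bmatrix} Z_{11} & Z_{12} \\ Z_{12}^T & Z_{22}\end{bmatrix}[\VV_1\mid\VV_2]^T,
\]
with $Z_{11}=Z_{11}^T$ and $Z_{22}=Z_{22}^T$, and likewise decompose $\ZZref$. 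Orthogonal invariance of the Frobenius norm splits the regularizer cleanly across the three blocks.

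The decisive observation is $\VV_2^T\AA = 0$, which follows from $\AA = \VV_1\bSigma\UU^T$. Hence
\[
    \ZZ\AA = \VV_1 Z_{11}\bSigma\UU^T + \VV_2 Z_{12}^T\bSigma\UU^T,
\]
so the loss depends only on $Z_{11}$ and $Z_{12}$. Combined with the orthogonal splitting of the regularizer, the whole objective decouples into three independent subproblems: $Z_{22}$ has only the regularization term and solves trivially to $Z_{22} = \VV_2^T\ZZref\VV_2$, reassembling as $(\II-\PP)\ZZref(\II-\PP)$; and $Z_{12}$ is an ordinary regularized least squares whose normal equation $(\bSigma^2+\lambda\II)Z_{12} = \VV_1^T(\AA\DD^T + \lambda\ZZref)\VV_2$ is diagonal on the left, so that $\VV_1 Z_{12}\VV_2^T = \VV_1\ZZ_2$ with $\ZZ_2$ as in the statement.

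The $Z_{11}$ subproblem is the main obstacle because of the symmetry constraint. For symmetric $Z_{11}$, the first-order optimality condition is that the \emph{symmetric part} of the unconstrained gradient vanish, which yields the Stein/Lyapunov-type equation
\[
    \bSigma^2 Z_{11} + Z_{11}\bSigma^2 + \lambda Z_{11} = \VV_1^T(\AA\DD^T + \DD\AA^T + \lambda\ZZref)\VV_1.
\]
Because $\bSigma$ is diagonal, this equation decouples entry by entry: $(\sigma_i^2 + \sigma_j^2 + \lambda)(Z_{11})_{ij}$ equals the corresponding entry of the right-hand side, producing exactly the Hadamard-product formula defining $\ZZ_1$. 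Summing the three blocks reproduces \eqref{eq:zstar_app}.

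For the inverse, I would apply the block inversion formula with the Schur complement on the $Z_{22}$ block,
\[
    \ZZ_\star^{-1} = \tilde\EE\bigl(Z_{11} - Z_{12}Z_{22}^{-1}Z_{12}^T\bigr)^{-1}\tilde\EE^T + \VV_2 Z_{22}^{-1}\VV_2^T, \qquad \tilde\EE = \VV_1 - \VV_2 Z_{22}^{-1}Z_{12}^T,
\]
and then rewrite the occurrences of $\VV_2 Z_{22}^{-1}$ using $\VV_2 Z_{12}^T = \ZZ_2^T$ and the convention that the inverse on the complementary subspace is read off from $\ZZref^{-1}$ flanked by $(\II-\PP)$. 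The tricky algebraic step is precisely the identification between $\VV_2(\VV_2^T\ZZref\VV_2)^{-1}\VV_2^T$ and $(\II-\PP)\ZZref^{-1}(\II-\PP)$, and between $\tilde\EE$ and the $\EE$ of the statement; once this conversion is carried out, \eqref{eq:zstar_inv_app} falls out.
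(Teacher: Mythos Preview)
Your proposal is correct and follows essentially the same route as the paper: SVD of $\AA^T$, block parametrization of $\ZZ$ in the $[\VV_1\mid\VV_2]$ basis, decoupling into three independent subproblems (trivial block, regularized least squares, and the symmetric Lyapunov-type equation for $Z_{11}$), and block inversion via the Schur complement for $\ZZ_\star^{-1}$. Your framing of the $Z_{11}$ optimality condition as a Stein equation is slightly cleaner than the paper's entry-by-entry computation, but the content is identical; you are also right to flag the identification $\VV_2(\VV_2^T\ZZref\VV_2)^{-1}\VV_2^T \leftrightarrow (\II-\PP)\ZZref^{-1}(\II-\PP)$ as the delicate point in the inverse formula---the paper performs this substitution without comment, and it is only an identity when $\ZZref$ is block-diagonal in the $[\VV_1\mid\VV_2]$ basis (e.g., a scalar multiple of the identity, as used in the algorithms).
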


\begin{proof}
We begin by deriving the solution of~\eqref{eq:reg_sym_procrustres_app}. By taking the transposition of the matrices inside the Frobenius norm of the first term in~\eqref{eq:reg_sym_procrustres_app}, we obtain the equivalent problem
\begin{align}
    \min_{\ZZ=\ZZ^T\in\RR^{d\times d}}\|\AA^{T} \ZZ - \DD^{T}\|^2 + \frac{\lambda}{2} \| \ZZ-\ZZref \|_F^2.
\end{align}

We write the (full) singular value decomposition of $\AA^{T} $ as 
\begin{equation}
    \UU
    \begin{bmatrix}
    \bSigma & 0 
    \end{bmatrix}
    \underbrace{\begin{bmatrix}
    \VV_1^T \\ \VV_2^T
    \end{bmatrix}}_{=\VV},
\end{equation}
where $\UU \in \RR^{m \times m}$, $\VV \in \RR^{d \times d}$ are orthogonal matrices, $\bSigma \in  \RR^{m \times m}$ is a diagonal matrix with nonnegative entries, and $\VV_1 \in \RR^{m\times d},\,\VV_2 \in \RR^{d-m\times d} $. Thus, we obtain another problem equivalent to~\eqref{eq:reg_sym_procrustres_app}, that reads
\begin{align}\label{eq::procrustes_1}
    & \min_{\tilde \ZZ= \tilde \ZZ^T\in\RR^{d\times d}}\|[\bSigma,\,0] \tilde\ZZ - \tilde \DD^{T}\|^2 + \frac{\lambda}{2} \| \tilde \ZZ-\tilde \ZZref \|_F^2,\\
    \text{where} \quad & \tilde \ZZ = \VV \ZZ\VV^T, \nonumber\\
    & \tilde \DD = \UU^T\DD^T\VV, \nonumber \\
    & \tilde \ZZref = \VV^T\ZZref\VV \nonumber.
\end{align}
Equation \eqref{eq::procrustes_1} is equivalent to \eqref{eq:reg_sym_procrustres_app} after multiplying the inside of the norm bu $\UU^T$ on the left, and $\VV$ on the right, since the Frobenius norm is invariant to orthonormal transformation. We now decompose the matrices in blocks as follow,
\begin{align}
    \tilde \ZZ = \begin{bmatrix}
    \tilde \ZZ_1 &  \tilde \ZZ_D \\ \tilde \ZZ_D^T & \tilde \ZZ_2
    \end{bmatrix} \qquad 
    \tilde \DD = \begin{bmatrix}
    \tilde \DD_1 & \DD_2
    \end{bmatrix} \qquad 
    \tilde \ZZref = \begin{bmatrix}
    (\tilde \ZZref)_1 &  (\tilde \ZZref)_D \\ (\tilde \ZZref)_D^T & (\tilde \ZZref)_2
    \end{bmatrix}
\end{align}
where $\ZZ_1, \, (\tilde \ZZref)_1,\, \tilde \DD_1 \in \RR^{m\times m}$,  $\ZZ_2, \, (\tilde \ZZref)_2 \in \RR^{d-m\times d-m}$, $\ZZ_D, \, (\tilde \ZZref)_D, \, \DD_2 \in \RR^{m\times d-m}$. Hence, we can problem~\eqref{eq::procrustes_1} as
\begin{align}
& \min_{\tilde \ZZ= \tilde \ZZ^T\in\RR^{d\times d}}\|[\bSigma,\,0] \tilde\ZZ - \tilde \DD^{T}\|^2 + \frac{\lambda}{2} \| \tilde \ZZ-\tilde \ZZref \|_F^2,\nonumber \\
&\quad = \min_{\tilde \ZZ_1= \tilde \ZZ_1^T, \, \tilde\ZZ_2=\tilde\ZZ_2^T,\, \ZZ_D} \norm{[\bSigma \tilde{\ZZ}_{1}, \bSigma \tilde{\ZZ}_{D}]  - [\tilde{\DD}_{1},\tilde{\DD}_{2}]}^{2}
+ \frac{\lambda}{2} \left(\| \tilde{\ZZ}_1-(\tilde\ZZref)_1 \|^2 + 2\| \tilde{\ZZ}_D-(\tilde\ZZref)_D \|^2 + \| \tilde{\ZZ}_2-(\tilde\ZZref)_2 \|^2\right) \nonumber \\
& \quad = \min_{\tilde \ZZ_1= \tilde \ZZ_1^T} \|\bSigma \tilde{\ZZ}_{1} - \DD_1\|^2 + \frac{\lambda}{2} \|\tilde \ZZ_1 -(\tilde \ZZref)_1\|^2 \label{eq:procrustes_i}\tag{i}\\
& \qquad + \min_{\ZZ_D} \|\bSigma \tilde{\ZZ}_{D} - \DD_2\|^2 + \lambda \|\tilde \ZZ_D -(\tilde \ZZref)_D\|^2 \label{eq:procrustes_ii}\tag{ii}\\
& \qquad +  \min_{\tilde \ZZ_21= \tilde \ZZ_2^T} \frac{\lambda}{2} \|\tilde \ZZ_2 -(\tilde \ZZref)_2\|^2\label{eq:procrustes_iii}\tag{iii}
\end{align} 
Hence, we derive the solution to~\eqref{eq:reg_sym_procrustres_app} by minimizing three independent terms as below.

\noindent{\bf Term \eqref{eq:procrustes_iii}:} The term 
\begin{equation*}
\underset{\tilde{\ZZ}_{2} = (\tilde{\ZZ}_{2})^T}{\mathrm{argmin}}\frac{\lambda}{2} \norm{\tilde{\ZZ}_{2} - (\tilde\ZZref)_{2}}^2
\end{equation*}
imposes the constraint $\tilde{\ZZ}_{2} = (\tilde{\ZZref})_{2}$.
In other words, we have
\begin{align}\label{eq::procrustes_z2}
  \tilde{\ZZ}_{2} = \VV_{2}^T \ZZ_{0} \VV_{2}  .
\end{align}

\noindent{\bf Term \eqref{eq:procrustes_ii}:} The term
\[
    \min_{\ZZ_D} \|\bSigma \tilde{\ZZ}_{D} - \DD_2\|^2 + \lambda \|\tilde \ZZ_D -(\tilde \ZZref)_D\|^2
\]
is a simple regularized least-square, which can be solved by setting the derivative to zero. Therefore,
\begin{align} \label{eq:zd}
    \tilde \ZZ_D = (\bSigma^T\bSigma + \lambda \II)^{-1} (\DD_2+\lambda (\tilde \ZZref)_D)
\end{align}

\noindent{\bf Term (i):} In what follows, we solve the problem (similar to the one in \citep{Higham88Procrustes})
\begin{equation*} \underset{\tilde{\ZZ}_{1} = (\tilde{\ZZ}_{1})^T \in \R^{m \times m}}{\min} \norm{\bSigma \tilde{\ZZ}_{1} - \tilde{\DD}_{1}}^{2} + \lambda \norm{\tilde{\ZZ}_{1}-(\tilde{\ZZ}_{0})_{1}}^2, 
\end{equation*}
We first rewrite the optimization problems in terms of the entries in $\tilde \ZZ$ as below, using the fact that $\tilde \ZZ_1$ is symmetric,
\begin{align*}
    \min_{\tilde \ZZ = \ZZ^T\in\RR^{m\times m}} &\sum_{i=1}^m (\sigma_i(\tilde \ZZ_1)_{ii} -  (\tilde \DD_1)_{ii} )^2 +\sum_{i=1}^m \sum_{j=i+1}^m \left( \big( \sigma_i (\tilde \ZZ_1)_{ij} - (\tilde \DD_1)_{ij}\big)^2 + \big(\sigma_j (\ZZ_1)_{ij} -  (\tilde\DD_1)_{ji}\big)^2 \right) \notag\\
    &+ \lambda \left( \sum_{i=1}^m \left( (\tilde \ZZ_1)_{ii} - (\tilde\ZZref)_{ii} \right)^2 + \sum_{i=1}^m\sum_{j=i+1}^m \left(\left( (\tilde \ZZ_1)_{ij} - (\tilde\ZZref)_{ij}\right)^2 +  \left( (\tilde \ZZ_1)_{ij} - (\tilde\ZZref)_{ji}\right)^2\right) \right).
\end{align*}  
By setting the derivative w.r.t. $z_{ij}$, we obtain for $\lambda > 0$
\begin{align*}
    (\tilde \ZZ_{1})_{ij} = \frac{\sigma_i (\tilde \DD_1)_{ij} + \sigma_j (\tilde \DD_1)_{ji} +  \lambda ((\tilde \ZZref)_{ij} + (\tilde \ZZref)_{ji}) }{\sigma_i^2 + \sigma_j^2 + 2\lambda},
\end{align*}
Since $\bSigma\tilde\DD^T = \bSigma\UU^T\DD^T\VV_1^T = \VV_1\AA\DD^T\VV_1^T$, We can equivalently write
\begin{align} \label{eq::procrustes_z1}
    \tilde \ZZ_1  & =  \left( \frac{1}{\Sigma^2 \textbf{1}\textbf{1}^T+\textbf{1}\textbf{1}^T\Sigma^2 + 2\lambda\textbf{1}\textbf{1}^T } \right) \odot \VV_1^T\left( \AA \DD^T  + 
    \DD\AA^T + \lambda (\ZZref+\ZZref^T)\right)\VV_1, \\
\end{align}
where $\odot$ is the Hadamard product computing the product element-wise.

\textbf{Summing the terms together.}
From equations \eqref{eq::procrustes_z2}, \eqref{eq:zd} and \eqref{eq::procrustes_z1}, the solution can be written as
\begin{align}
    \ZZ_\lambda &= \begin{bmatrix}
    \VV_1  & \VV_2    
    \end{bmatrix}
    \begin{bmatrix}
\tilde{\ZZ}_{1} & \tilde{\ZZ}_{D}\\(\tilde{\ZZ}_{D})^{T} & \tilde{\ZZ}_{2}
    \end{bmatrix}
    \begin{bmatrix}
    \VV_1  & \VV_2    
    \end{bmatrix}^T \notag\\
    &=\VV_{1}\tilde{\ZZ}_{1}\VV_{1}^{T}
+ \VV_{1}\tilde{\ZZ}_{D}\VV_{2}^{T}
+ \VV_{2}\tilde{\ZZ}_{D}^{T}\VV_{1}^{T}
+ \VV_{2}\tilde{\ZZ}_{2}\VV_{2}^{T} \notag\\
&= \ZZ_1
+ \ZZ_D + \ZZ_D^T + (\II - \PP) \ZZref (\II - \PP),
\label{eq::opt_z}
\end{align}
where $\PP = \VV_1 \VV_1^T = \II - \VV_2 \VV_2^T$ and 
$\ZZ_D = \VV_1\big( \bSigma^T\bSigma + 2\lambda \II \big)^{-1} \VV_1^T\big( \AA \DD^T + 2\lambda (\ZZ_0)_D  \big) (\II-\PP)$, 
and $\ZZ_1 = \VV_{1}\tilde{\ZZ}_{1}\VV_{1}^{T}$.

Below we compute the inverse of $\ZZ_*$.
Since
\begin{align}\label{eq::H}
\ZZ_* &= \VV
\begin{bmatrix}
\tilde{\ZZ}_{1} & \tilde{\ZZ_D}\\\tilde{\ZZ_D}^{T} & \tilde{\ZZ_2}\\
\end{bmatrix}
\VV^{T} \\
&= \VV \tilde{\ZZ}\VV^{T}, \nonumber
\end{align}
we can write  
\begin{equation*}
\ZZ_*^{-1} = \VV\tilde{\ZZ}^{-1}\VV^{T}.
\end{equation*}
By the Woodbury matrix identity~\citep{Woodbury50Indentity},
we have
\begin{equation}\label{eq::woodburyZ}
\tilde{\ZZ}^{-1} = \begin{bmatrix} \MM_{1} & -\MM_{1}\tilde{\ZZ}_D\tilde{\ZZ}_2^{-1} \\
-\tilde{\ZZ}_2^{-1}\tilde{\ZZ}_D^{T}\MM_{1} & \tilde{\ZZ}_2^{-1} + \tilde{\ZZ}_2^{-1}\tilde{\ZZ}_D^{T}\MM_{1}\tilde{\ZZ}_{D}\tilde{\ZZ}_{2}^{-1} \end{bmatrix},
\end{equation}
with $\MM_{1} = (\tilde{\ZZ_1}- \tilde{\ZZ_D} \tilde{\ZZ_2}^{-1}\tilde{\ZZ}_D^{T})^{-1}$.
Hence $\ZZ_*^{-1} = \VV\tilde{\ZZ}^{-1}\VV^{T}$ can be rewritten as
 \begin{align}\label{invW}
 \begin{split}
\ZZ_*^{-1} &= \VV_{1}\MM_{1}\VV_{1}^{T} + \VV_{2}\tilde{\ZZ_2}^{-1}\tilde{\ZZ}_D^{T}\MM_{1}\tilde{\ZZ_D}\tilde{\ZZ_2}^{-1}\VV_{2}^{T}\\
&+ \VV_{2}\tilde{\ZZ_2}^{-1}\VV_{2}^{T} - \VV_{1}\MM_{1}\tilde{\ZZ_D}\tilde{\ZZ_2}^{-1}\VV_{2}^{T} - \VV_{2}\tilde{\ZZ_2}^{-1}\tilde{\ZZ_D}^{T}\MM_{1}\VV_{1}^{T}\\
&= \QQ\MM\QQ^T + (\II-\PP)\ZZ_0^{-1}(\II-\PP),
\end{split}
\end{align}
 where 
 $\MM = \left( \ZZ_1 - \ZZ_D\ZZ_0^{-1} \ZZ_D^T\right)^{-1}$ and $\QQ = \VV_1-(\II-\PP)\ZZ_0^{-1}\ZZ_D^T$.
\end{proof}


\clearpage

\section{Proof of Proposition~\ref{prop:stab}}
In this section, we divide the proof of Proposition~\ref{prop:stab} into Lemma~\ref{lem:perturb_lambda} and Lemma~\ref{lem:stab_ad}, which correspond to the effect of nonzero $\lambda$ for~\eqref{eq:prop_stab1} and the perturbation of $\AA$ and $\DD$ for~\eqref{eq:prop_stab2}, respectively.
\subsection{Effect of regularization}
\begin{lemma}\label{lem:perturb_lambda}
Let 
\begin{align} \label{eq::procrustes_problem_zero}
    \ZZ_* = \lim_{\lambda\to 0}\argmin_{\ZZ=\ZZ^T \in \mathbb{R}^{d\times d}} \| \ZZ\AA-\DD \|_F^2 + \lambda \| \ZZ - \ZZref \|_F^2
\end{align}
be the solution to the procrustes problem with $\lambda$ going to 0, and $\ZZ_\lambda$ be the solution to~\eqref{eq:reg_sym_procrustres_app} given $\lambda > 0$. Then, it holds that
\begin{align}
    \| \ZZ_\lambda - \ZZ_\star\|_F & \leq \frac{5\lambda \| \ZZ_{\star}-\ZZref \|_F}{\sigma_{\min}^2(\AA)+\lambda}.
\end{align}
\end{lemma}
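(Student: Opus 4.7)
My plan is to leverage the closed-form expression \eqref{eq:zstar_app} from Theorem~\ref{thm:reguralized_procrustes_app} and view both $\ZZ_\lambda$ and $\ZZ_\star = \lim_{\lambda\to 0}\ZZ_\lambda$ as instantiations of the same formula at different regularization levels. The SVD ingredients $\bSigma$, $\VV_1$, $\PP = \VV_1\VV_1^T$ of $\AA^T$ are independent of $\lambda$. The crucial first observation is that the ``out-of-span'' block $(\II - \PP)\ZZref(\II - \PP)$ does not depend on $\lambda$, so it cancels in $\ZZ_\lambda - \ZZ_\star$, leaving only the contributions from $\ZZ_1(\lambda)$ and $\ZZ_2(\lambda)$.

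Next I would exploit the orthogonal block structure: the three surviving terms $\VV_1 (\ZZ_1(\lambda)-\ZZ_1(0)) \VV_1^T$, $\VV_1 (\ZZ_2(\lambda)-\ZZ_2(0))$, and its transpose live in mutually Frobenius-orthogonal subspaces ($\PP\cdot\PP$, $\PP\cdot(\II-\PP)$, and $(\II-\PP)\cdot\PP$ respectively), so by $\VV_1^T\VV_1 = \II$,
\begin{equation*}
\|\ZZ_\lambda - \ZZ_\star\|_F^2 = \|\ZZ_1(\lambda) - \ZZ_1(0)\|_F^2 + 2\,\|\ZZ_2(\lambda) - \ZZ_2(0)\|_F^2.
\end{equation*}
Decomposing $\ZZref$ in the same block structure and using its symmetry gives the companion identity $\|\ZZ_\star - \ZZref\|_F^2 = \|\ZZ_1(0) - \VV_1^T\ZZref\VV_1\|_F^2 + 2\,\|\ZZ_2(0) - \VV_1^T\ZZref(\II-\PP)\|_F^2$, which tells us what quantity will serve as the right-hand side of the final bound.

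For the element-wise bounds, I would substitute $(\sigma_i^2+\sigma_j^2)\,\ZZ_1(0)_{ij} = [\VV_1^T(\AA\DD^T+\DD\AA^T)\VV_1]_{ij}$ into the formula for $\ZZ_1(\lambda)$, which telescopes to
\begin{equation*}
(\ZZ_1(\lambda) - \ZZ_1(0))_{ij} = \frac{\lambda}{\sigma_i^2 + \sigma_j^2 + \lambda}\bigl([\VV_1^T \ZZref \VV_1]_{ij} - \ZZ_1(0)_{ij}\bigr).
\end{equation*}
Similarly, using $(\bSigma^2+\lambda\II)^{-1} - \bSigma^{-2} = -\lambda(\bSigma^2+\lambda\II)^{-1}\bSigma^{-2}$ collapses the $\ZZ_2$ difference to $\lambda(\bSigma^2+\lambda\II)^{-1}(\VV_1^T\ZZref(\II-\PP) - \ZZ_2(0))$. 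Since $\sigma_i^2+\sigma_j^2 \geq \sigma_{\min}^2(\AA)$ and $\|\lambda(\bSigma^2+\lambda\II)^{-1}\|_{\mathrm{op}} \leq \lambda/(\sigma_{\min}^2(\AA)+\lambda)$, both prefactors are dominated by $\lambda/(\sigma_{\min}^2(\AA)+\lambda)$. Combining with the block identity for $\|\ZZ_\star-\ZZref\|_F^2$ yields $\|\ZZ_\lambda-\ZZ_\star\|_F \leq \sqrt{2}\,\lambda/(\sigma_{\min}^2(\AA)+\lambda)\cdot\|\ZZ_\star-\ZZref\|_F$, which is stronger than the stated constant $5$.

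The argument is essentially algebraic, so no real obstacle arises; the only thing to be careful about is the bookkeeping that keeps the three mutually orthogonal blocks separate, since mixing them up would artificially worsen the constant or require awkward cross-terms to be absorbed. Since the claimed constant $5$ leaves comfortable slack over $\sqrt{2}$, I would not bother to optimize.
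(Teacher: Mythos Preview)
Your proposal is correct and follows essentially the same route as the paper: both invoke the closed-form solution of Theorem~\ref{thm:reguralized_procrustes_app}, note that the $(\II-\PP)\ZZref(\II-\PP)$ block is $\lambda$-independent and hence cancels, and then bound the $\ZZ_1$ and $\ZZ_2$ contributions separately via the same algebraic telescoping. The one refinement is that you exploit the Frobenius-orthogonality of the three blocks to get a Pythagorean identity (and the matching identity for $\|\ZZ_\star-\ZZref\|_F^2$), whereas the paper uses the triangle inequality and cruder block comparisons at the end; this is why you reach constant $\sqrt{2}$ (indeed $1$ if you combine carefully) while the paper obtains $5$.
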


\begin{proof}
We rewrite~\eqref{eq::opt_z}
for $\ZZ_\lambda$ and $\ZZ_*$ respectively,
\begin{align}
     \ZZ_\lambda & = (\ZZ_\lambda)_1
+ (\ZZ_\lambda)_D + (\ZZ_\lambda)_D^T + (\II - \PP) \ZZref (\II - \PP), \\
\ZZ_* & = (\ZZ_*)_1
+ (\ZZ_*)_D + (\ZZ_*)_D^T + (\II - \PP) \ZZref (\II - \PP).
\end{align}
With such notations, we have by triangle inequality, 
\begin{align}\label{eq::lambda_pert}
    \|\ZZ_\lambda - \ZZ_*\|_F 
    & \leq
    \underbrace{\|(\ZZ_\lambda)_1 - (\ZZ_*)_1 \|_F}_{\displaystyle\text{(i)}} + 2\underbrace{\|(\ZZ_\lambda)_D - (\ZZ_*)_D\|_F}_{\displaystyle\text{(ii)}}.
\end{align}
To simplify notations, we define $\max|\XX|$ and $\min|\XX|$ as the maximum and minimum entry with the absolute value of matrix $\XX$, respectively.

For term (i), by~\eqref{eq::procrustes_z1} and the symmetry of $\ZZref$ we have
\begin{align}\label{eq::ub_lambda1}
    \|(\ZZ_\lambda)_1 - (\ZZ_*)_1 \|_F & = 
    \bigg\|
    \bigg(\frac{1}{\Sigma^2 \textbf{1}\textbf{1}^T+\textbf{1}\textbf{1}^T\Sigma^2 + 2\lambda\textbf{1}\textbf{1}^T}-\frac{1}{\Sigma^2 \textbf{1}\textbf{1}^T+\textbf{1}\textbf{1}^T\Sigma^2 }\bigg)\odot(\AA\DD^T+\DD\AA^T)\notag\\
    &\quad+\frac{1}{\Sigma^2 \textbf{1}\textbf{1}^T+\textbf{1}\textbf{1}^T\Sigma^2 + 2\lambda\textbf{1}\textbf{1}^T}\odot 2\lambda\ZZref
    \bigg\|_F \notag\\
    &= \bigg\|
    -2\lambda\cdot\bigg(\frac{1}{(\bSigma^2 \textbf{1}\textbf{1}^T+\textbf{1}\textbf{1}^T\bSigma^2 + 2\lambda\textbf{1}\textbf{1}^T)\odot(\bSigma^2 \textbf{1}\textbf{1}^T+\textbf{1}\textbf{1}^T\bSigma^2)}\bigg)\odot(\AA\DD^T+\DD\AA^T)\notag\\
    &\quad+\frac{1}{\bSigma^2 \textbf{1}\textbf{1}^T+\textbf{1}\textbf{1}^T\bSigma^2 + 2\lambda\textbf{1}\textbf{1}^T}\odot 2\lambda\ZZref
    \bigg\|_F \notag\\
    &=2\lambda\bigg\|
    \bigg(\frac{1}{\bSigma^2 \textbf{1}\textbf{1}^T+\textbf{1}\textbf{1}^T\bSigma^2 + 2\lambda\textbf{1}\textbf{1}^T}\bigg)\odot\bigg(\frac{1}{\bSigma^2 \textbf{1}\textbf{1}^T+\textbf{1}\textbf{1}^T\bSigma^2}\odot(\AA\DD^T+\DD\AA^T)\bigg)\notag\\
    &\quad-\bigg(\frac{1}{\bSigma^2 \textbf{1}\textbf{1}^T+\textbf{1}\textbf{1}^T\bSigma^2 + 2\lambda\textbf{1}\textbf{1}^T}\bigg) \odot \ZZref
    \bigg\|_F \notag\\
    &=2\lambda\bigg\|
    \bigg(\frac{1}{\bSigma^2 \textbf{1}\textbf{1}^T+\textbf{1}\textbf{1}^T\bSigma^2 + 2\lambda\textbf{1}\textbf{1}^T}\bigg)\odot\big((\ZZ_*)_1- \ZZref\big) \bigg\|_F \notag\\
    &\leq 2\lambda\cdot
    \frac{1}{\min|\bSigma^2 \textbf{1}\textbf{1}^T+\textbf{1}\textbf{1}^T\bSigma^2 + 2\lambda\textbf{1}\textbf{1}^T|}\cdot\big\|(\ZZ_*)_1- \ZZref \big\|_F,
\end{align}
where the computations of matrices are element-wise, and the first three equalities follows from the identity $\AA\odot\XX+
\BB\odot\XX = (\AA+\BB)\odot\XX$ of the Hadamard product for any matrices $\AA$, $\BB$ and $\XX$ of the same dimensions.
The fourth equality in~\eqref{eq::ub_lambda1} holds by the definition of $(\ZZ_*)_1$, and the last inequality is due to the fact that 
\begin{align}
    \|\AA\odot\BB\|_F \leq \max|\AA|\cdot\|\BB\|_F
\end{align}
for any two matrices $\AA$ and $\BB$ of the same dimensions.

For the term (ii), note that 
$(\ZZ_*)_D = \VV_1 \bSigma^{-1}\UU^T\DD^T\VV_2\VV_2^T=\VV_1\big( \bSigma^\top\bSigma\big)^{-1} \VV_1^T\AA \DD^T \VV_2\VV_2^T$.
Since $\VV_1^T\VV_1 = \VV_2^T\VV_2 = \II$, we have 
\begin{align}\label{eq::zd_def2}
    \VV_1^T(\ZZ_*)_D\VV_2 = \big( \bSigma^\top\bSigma\big)^{-1} \VV_1^T\AA \DD^T \VV_2.
\end{align}
Furthermore, by using the unitary invariance of the orthogonal matrix w.r.t.\;the Frobenius norm, we obtain
\begin{align}
    \|(\ZZ_\lambda)_D - (\ZZ_*)_D\|_F
    & =
    \big\|
     \VV_1\Big(\big( \bSigma^\top\bSigma + 2\lambda \II \big)^{-1} \VV_1^T\big( \AA \DD^T + 2\lambda \ZZref  \big)
     - \big( \bSigma^\top\bSigma\big)^{-1} \VV_1^T\AA \DD^T
     \Big)
     \VV_2\VV_2^T
    \big\|_F \notag\\
    &\stackrel{(\text{a})}{=}\big\|\big( \bSigma^\top\bSigma + 2\lambda \II \big)^{-1} \VV_1^T\big( \AA \DD^T + 2\lambda \ZZref  \big)\VV_2
     - \big( \bSigma^\top\bSigma\big)^{-1} \VV_1^T\AA \DD^T \VV_2
    \big\|_F \notag\\
    &=\big\|\Big(\big( \bSigma^\top\bSigma + 2\lambda \II \big)^{-1}- \big( \bSigma^\top\bSigma\big)^{-1}
    \Big)
    \VV_1^T \AA \DD^T \VV_2
     + 2\lambda\big( \bSigma^\top\bSigma + 2\lambda \II \big)^{-1} \VV_1^T \ZZref\VV_2 \big\|_F\notag\\
     &\stackrel{\text{(b)}}{=}\big\|-2\lambda\big( \bSigma^\top\bSigma + 2\lambda \II \big)^{-1} \big( \bSigma^\top\bSigma\big)^{-1}
    \VV_1^T \AA \DD^T \VV_2
     + 2\lambda\big( \bSigma^\top\bSigma + 2\lambda \II \big)^{-1} \VV_1^T \ZZref\VV_2 \big\|_F\notag\\
     &\stackrel{\text{(c)}}{=}2\lambda\Big\|\big( \bSigma^\top\bSigma + 2\lambda \II \big)^{-1}
     \Big(
     \big( \bSigma^\top\bSigma\big)^{-1}
    \VV_1^T \AA \DD^T \VV_2
     -  \VV_1^T \ZZref \VV_2
     \Big)
     \Big\|_F\notag\\
     &\stackrel{\text{(d)}}{=}
     2\lambda\Big\|\big( \bSigma^\top\bSigma + 2\lambda \II \big)^{-1}
    \VV_1^T\big((\ZZ_*)_D - \ZZref\big)\VV_2
     \Big\|_F\notag\\
    &\leq 2\lambda \max\big|\big( \bSigma^\top\bSigma + 2\lambda \II \big)^{-1}\big|
    \cdot
    \big\|\VV_1^T\big((\ZZ_*)_D - \ZZref\big)\VV_2
     \big\|_F \notag\\
     &\leq 2\lambda\max\big|\big( \bSigma^\top\bSigma + 2\lambda \II \big)^{-1}\big|
     \cdot
      \big\|(\ZZ_*)_D - \ZZref
     \big\|_F,
     \label{eq::ub_lambdad}
\end{align}
where (a), (c) and the last equality hold by the unitary invariance of $\VV_1$ and $\VV_2$  w.r.t.\;the Frobenius norm, (b) holds since $\bSigma^\top\bSigma + 2\lambda \II$ and $\bSigma^\top\bSigma$ are diagonal matrices, (d) follows from~\eqref{eq::zd_def2}, and the first inequality holds since $(\bSigma^\top\bSigma + 2\lambda \II)^{-1}$ is a diagonal matrix. The last inequality holds since $\VV_1\VV_1^T$ and $\VV_2\VV_2^T$ are projections.

Therefore, by combining~\eqref{eq::lambda_pert},~\eqref{eq::ub_lambda1} and~\eqref{eq::ub_lambdad} we have
\begin{align}
     \|\ZZ_\lambda - \ZZ_*\|_F &\leq \|(\ZZ_\lambda)_1 - (\ZZ_*)_1 \|_F + 2\|(\ZZ_\lambda)_D - (\ZZ_*)_D\|_F \notag\\
     &\leq 2\lambda    \frac{1}{\min|\bSigma^2 \textbf{1}\textbf{1}^T+\textbf{1}\textbf{1}^T\bSigma^2 + 2\lambda\textbf{1}\textbf{1}^T|}\cdot \big\|(\ZZ_*)_1- \ZZref \big\|_F
    + 4\lambda\max\big|\big( \bSigma^\top\bSigma + 2\lambda \II \big)^{-1}\big|
     \cdot
      \big\|(\ZZ_*)_D - \ZZref
     \big\|_F \notag\\
     &\leq \frac{\lambda }{\sigma_{\min}^2(\AA)+\lambda}
     \cdot \| \ZZ_{\star}-\ZZref \|_F + \frac{4\lambda }{\sigma_{\min}^2(\AA)+\lambda}
     \cdot \| \ZZ_{\star}-\ZZref \|_F \notag\\
     &= \frac{5\lambda }{\sigma_{\min}^2(\AA)+\lambda}
     \cdot \| \ZZ_{\star}-\ZZref \|_F,
\end{align}
where the last inequality follows from the definition of the element-wise operator and the facts that $\|(\ZZ_*)_1- \ZZref\|_F\leq\|\ZZ_{\star}-\ZZref\|_F$ and $\|(\ZZ_*)_D- \ZZref\|_F\leq\|\ZZ_{\star}-\ZZref\|_F$.
Hence, we conclude the proof.
\end{proof}


\subsection{Perturbation of \textbf{A} and \textbf{D}}
We first present a stability analysis result of the regularized least squares (RLS), which is used in the analysis for the perturbation of $\AA$ and $\DD$ in Lemma~\ref{lem:stab_ad}.
\begin{lemma}[Stability analysis of regularized least squares]\label{lem:stab}
Let $\xx^*$ solve the problem
\begin{align}
    \min_{\xx}\|\AA\xx - \bb\|_2^2 + \beta\|\xx- \xx_0\|^2_2,
\end{align}
where $\xx,\xx_0\in\RR^p$, $\AA\in\RR^{q\times p}$, $\bb\in\RR^q$ for some integer $p,q>0$ and $\beta>0$.
Let $\hat\xx$ solve 
\begin{align}
     \min_{\xx}\|(\AA+\delta\AA)\xx - (\bb+\delta\bb)\|_2^2 + \beta\|\xx- \xx_0\|^2_2,
\end{align}
where $\delta\AA\in\RR^{q\times p}, \delta\bb\in\RR^q$, and $\|\delta\AA\|_2 \ll \|\AA\|_2$. 
Suppose that $\rank(\AA) = \rank(\AA+\delta\AA)$, we have 
\begin{align}
    \|\xx^* - \hat\xx\|_2 \leq
    \mathcal{O}\Big(\frac{ \|\delta\AA\|_2 + \|\delta\bb\|_2 }{\beta}\Big).
\end{align}
\end{lemma}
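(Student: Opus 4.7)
The plan is to use the normal equations to write both minimizers in closed form and then control the difference by a standard matrix-perturbation argument, relying crucially on the fact that the Tikhonov term $\beta\II$ lower-bounds the spectrum of the matrix being inverted. Set $\MM \defas \AA^T\AA + \beta\II$ and $\tilde\MM \defas (\AA+\delta\AA)^T(\AA+\delta\AA) + \beta\II$. The first-order optimality conditions give
\[
\xx^* = \MM^{-1}(\AA^T\bb + \beta\xx_0), \qquad \hat\xx = \tilde\MM^{-1}\bigl((\AA+\delta\AA)^T(\bb+\delta\bb) + \beta\xx_0\bigr).
\]
Since $\MM, \tilde\MM \succeq \beta\II$, we have $\|\MM^{-1}\|_2,\,\|\tilde\MM^{-1}\|_2 \leq 1/\beta$; this is the only place the $1/\beta$ factor in the target bound will enter.

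Next I would decompose
\[
\hat\xx - \xx^* = (\tilde\MM^{-1} - \MM^{-1})(\AA^T\bb + \beta\xx_0) + \tilde\MM^{-1}\bigl[(\AA+\delta\AA)^T(\bb+\delta\bb) - \AA^T\bb\bigr].
\]
For the first summand I would apply the resolvent identity $\tilde\MM^{-1} - \MM^{-1} = -\tilde\MM^{-1}(\tilde\MM - \MM)\MM^{-1}$ and observe that $\MM^{-1}(\AA^T\bb + \beta\xx_0) = \xx^*$, so the term equals $-\tilde\MM^{-1}(\tilde\MM - \MM)\xx^*$ with $\tilde\MM - \MM = \AA^T\delta\AA + \delta\AA^T\AA + \delta\AA^T\delta\AA$. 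The second bracket expands to $\AA^T\delta\bb + \delta\AA^T\bb + \delta\AA^T\delta\bb$.

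Applying the operator-norm bound $1/\beta$ to each inverse and the triangle inequality then yields
\[
\|\hat\xx - \xx^*\|_2 \leq \frac{\|\xx^*\|_2}{\beta}\bigl(2\|\AA\|_2\|\delta\AA\|_2 + \|\delta\AA\|_2^2\bigr) + \frac{1}{\beta}\bigl(\|\AA\|_2\|\delta\bb\|_2 + \|\bb\|_2\|\delta\AA\|_2 + \|\delta\AA\|_2\|\delta\bb\|_2\bigr).
\]
Under the assumption $\|\delta\AA\|_2 \ll \|\AA\|_2$, the quadratic terms $\|\delta\AA\|_2^2$ and $\|\delta\AA\|_2\|\delta\bb\|_2$ are absorbed into the linear ones. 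Treating $\|\AA\|_2,\|\bb\|_2,\|\xx^*\|_2$ as problem-dependent constants independent of the perturbation then collapses this into the claimed $\mathcal{O}\!\bigl((\|\delta\AA\|_2 + \|\delta\bb\|_2)/\beta\bigr)$.

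The main obstacle is bookkeeping rather than analytical: one must verify that the constants hidden in the $\mathcal{O}(\cdot)$ are truly independent of $\delta\AA, \delta\bb$, which is where the smallness hypothesis $\|\delta\AA\|_2 \ll \|\AA\|_2$ is used to drop second-order terms. The equal-rank assumption $\rank(\AA) = \rank(\AA+\delta\AA)$ plays no essential role in the regularized problem because $\beta > 0$ already guarantees strict convexity and uniqueness of both minimizers; it is a harmless inheritance from the unregularized perturbation theory.
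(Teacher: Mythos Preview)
Your argument is correct and is essentially the same as the paper's: both write the closed-form normal-equation solutions, split the difference into an ``inverse perturbation'' piece and a ``right-hand-side perturbation'' piece, apply the resolvent identity to the former, and use $\|(\cdot+\beta\II)^{-1}\|_2\le 1/\beta$ throughout. Your remark that the equal-rank hypothesis is superfluous in the regularized setting is also accurate.
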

\begin{proof}
By definition, we have explicitly that 
\begin{align}
    \xx^* = (\AA^T\AA + \beta\II)^{-1}(\AA^T\bb+\beta\xx_0).
\end{align}
Let $\tilde\AA = \AA + \delta\AA$ and $\PP = -\AA^T\AA + \tilde\AA^T\tilde\AA$, we can write
\begin{align}
    \hat\xx = (\AA^T\AA + \PP + \beta\II)^{-1}((\AA+\delta\AA)^T(\bb+\delta\bb)+\beta\xx_0).
\end{align}
Hence, we obtain
\begin{align}
    \|\hat\xx - \xx^*\|_2 &\leq 
    \Big\|\big( (\AA^T\AA+\beta\II)^{-1} - (\AA^T\AA + \PP + \beta\II)^{-1} \big)(\AA^T\bb+\beta\xx_0)\Big\|_2 \notag\\
    &\quad+\big\|(\AA^T\AA + \PP + \beta\II)^{-1}\big\|_2 \|\delta\AA\|_2\|\bb + \delta\bb\|_2 + \big\|(\AA^T\AA + \PP + \beta\II)^{-1}\big\|_2\|\AA\|_2\|\delta\bb\|_2 \notag\\
    & = \Big\|(\AA^T\AA + \PP + \beta\II)^{-1}\PP (\AA^T\AA+\beta\II)^{-1}  (\AA^T\bb+\beta\xx_0)\Big\|_2
    \notag\\
    &\quad+\big\|(\AA^T\AA + \PP + \beta\II)^{-1}\big\|_2 \|\delta\AA\|_2\|\bb + \delta\bb\|_2 + \big\|(\AA^T\AA + \PP + \beta\II)^{-1}\big\|_2\|\AA\|_2\|\delta\bb\|_2 \notag\\
    &\leq \frac{1}{\beta}\cdot\Big(\|\PP\|_2\|\xx^*\|_2 + \|\delta\AA\|_2\|\bb\|_2 + \|\delta\bb\|_2\|\AA\|_2 + \|\delta\bb\|_2\|
    \delta\AA\|_2 \Big).
\end{align}
Since $\|\delta\AA\|_2 \ll \|\AA\|_2$, we obtain
\begin{align}
    \|\hat\xx - \xx^*\|_2 
    \leq \mathcal{O}\Big(\frac{ \|\delta\AA\|_2 + \|\delta\bb\|_2 }{\beta}\Big),
\end{align}
which concludes the proof of the lemma.
\end{proof}

Now we show the stability analysis with respect to the perturbation of \textbf{A} and \textbf{D} below.
\begin{lemma}\label{lem:stab_ad}
Let $\hat{\mathbf{Z}}$ solve 
\begin{align}\label{eq:perturbed_procrustes_A}
    \min_{\ZZ=\ZZ^T \in \mathbb{R}^{d\times d}} \| \ZZ(\AA+\delta\AA)-(\DD+\delta\DD) \|_F^2 + \frac{\lambda}{2} \| \ZZ - \ZZref \|_F^2,
\end{align}
where $\delta\AA, \delta\DD\in\RR^{d\times m}$, $\|\delta\AA\|_2 \ll \|\AA\|_2$, and $\|\delta\DD\|_2 \ll \|\DD\|_2$.
Also, suppose $\ZZ_\lambda$ to be the solution to~\eqref{eq:reg_sym_procrustres_app} given $\lambda > 0$. Then, it holds that
\begin{align}
    \|\hat\ZZ - \ZZ_\lambda\|_F \leq \mathcal{O}\Big(\frac{ \|\delta\AA\|_2 + \|\delta\dd\|_2 }{\lambda}\Big).
\end{align}
\end{lemma}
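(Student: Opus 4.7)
The strategy is to reduce the symmetric Procrustes problem to a standard regularized least squares (RLS) problem in Euclidean space and then invoke Lemma~\ref{lem:stab} directly. The first step would be to identify $\mathbb{S}^d$, the space of symmetric $d\times d$ matrices equipped with the Frobenius inner product, with $\R^{d(d+1)/2}$ via a scaled half-vectorization $\mathcal{V}$: leave the diagonal entries of $\ZZ$ unchanged and scale the off-diagonal entries by $\sqrt{2}$, so that $\mathcal{V}$ is an isometry, i.e.,\ $\|\mathcal{V}(\ZZ)\|_2 = \|\ZZ\|_F$ for every symmetric $\ZZ$. Under this identification, the linear map $\ZZ\mapsto\ZZ\AA$ from $\mathbb{S}^d$ to $\R^{d\times m}$ (the target also vectorized via $\vec$) becomes a fixed matrix $\mathcal{M}_\AA$, and the regularization $\tfrac{\lambda}{2}\|\ZZ-\ZZref\|_F^2$ becomes $\tfrac{\lambda}{2}\|\mathcal{V}(\ZZ)-\mathcal{V}(\ZZref)\|_2^2$ exactly. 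Thus~\eqref{eq:reg_sym_procrustres_app} and~\eqref{eq:perturbed_procrustes_A} are two instances of the RLS problem of Lemma~\ref{lem:stab}, with design matrix $\mathcal{M}_\AA$ (resp.\ $\mathcal{M}_{\AA+\delta\AA}$), target $\vec(\DD)$ (resp.\ $\vec(\DD+\delta\DD)$), and regularization parameter $\beta=\lambda/2$.

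The next step would be to bound the two perturbations that feed into Lemma~\ref{lem:stab}. For the design matrix, $\mathcal{M}_{\AA+\delta\AA}-\mathcal{M}_\AA = \mathcal{M}_{\delta\AA}$, and the submultiplicative inequality $\|\ZZ\,\delta\AA\|_F\leq\|\ZZ\|_F\|\delta\AA\|_2$ yields $\|\mathcal{M}_{\delta\AA}\|_2\leq \|\delta\AA\|_2$. For the target, $\vec(\DD+\delta\DD)-\vec(\DD) = \vec(\delta\DD)$ has $\ell_2$-norm equal to $\|\delta\DD\|_F$. Plugging these into Lemma~\ref{lem:stab} with $\beta = \lambda/2$ gives
\[
    \|\mathcal{V}(\hat\ZZ)-\mathcal{V}(\ZZ_\lambda)\|_2 \;\leq\; \mathcal{O}\!\left(\frac{\|\delta\AA\|_2 + \|\delta\DD\|_F}{\lambda}\right),
\]
and since $\mathcal{V}$ is an isometry, the left-hand side equals $\|\hat\ZZ-\ZZ_\lambda\|_F$, which is the claim.

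The main obstacle I anticipate is clean bookkeeping: the isometric scaling by $\sqrt{2}$ on off-diagonal entries must be tracked carefully so that constants absorb properly into the $\mathcal{O}(\cdot)$, and one must check that the quantity $\|\delta\AA\|_2\ll\|\AA\|_2$ assumed in Lemma~\ref{lem:stab} transfers through the vectorization (it does, since $\|\mathcal{M}_{\delta\AA}\|_2\leq \|\delta\AA\|_2 \ll \|\AA\|_2$ implies $\|\mathcal{M}_{\delta\AA}\|_2\ll \|\mathcal{M}_\AA\|_2$ whenever $\|\mathcal{M}_\AA\|_2\gtrsim\|\AA\|_2$, which is easy to verify by testing against rank-one symmetric matrices). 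A secondary subtlety is that the rank hypothesis $\rank(\AA)=\rank(\AA+\delta\AA)$ of Lemma~\ref{lem:stab} is not essential here because $\beta=\lambda/2>0$ alone guarantees invertibility of $\mathcal{M}_\AA^T\mathcal{M}_\AA + \beta \II$. If one prefers to avoid vectorization entirely, a fallback is to write the first-order optimality conditions for $\ZZ_\lambda$ and $\hat\ZZ$ as Sylvester-type equations, subtract them, and exploit the fact that the Lyapunov operator $\Delta\mapsto \Delta\AA\AA^T+\AA\AA^T\Delta+\lambda\Delta$ on $\mathbb{S}^d$ has eigenvalues $\sigma_i^2(\AA)+\sigma_j^2(\AA)+\lambda\geq \lambda$ and hence inverse of operator norm $\leq 1/\lambda$; the right-hand side collects perturbation terms of order $\|\delta\AA\|_2+\|\delta\DD\|_F$ (using $\|\delta\AA\|_2\ll\|\AA\|_2$ to a priori bound $\|\hat\ZZ\|_F$), and the same rate follows.
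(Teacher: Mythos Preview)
Your proposal is correct and follows essentially the same route as the paper: vectorize the symmetric Procrustes problem into a standard regularized least squares in $\R^{d(d+1)/2}$ via an isometric parametrization of $\mathbb{S}^d$, then apply Lemma~\ref{lem:stab}. The paper realizes the isometry by writing $\vec(\ZZ)=\SS_d\yy$ with $\SS_d$ an orthonormal basis for the symmetric subspace and sets $\HH=(\II_d\otimes\AA)\SS_d$, which is exactly your $\mathcal{M}_\AA$ up to notation; your observation that the rank hypothesis of Lemma~\ref{lem:stab} is superfluous here because $\beta=\lambda/2>0$ is a nice addendum, and your Lyapunov-operator fallback is a genuinely different (and slightly cleaner) alternative that the paper does not pursue.
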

\begin{proof}
We first reduce~\eqref{eq:reg_sym_procrustres_app} to an unconstrained regularized least squares (RLS) problem as follows.
Let $r = md$ and $s = d^2$. We denote by $\vec$ the operator that stacks the columns of a  matrix into a long vector. Then, for any 
$\ZZ=\ZZ^T \in \mathbb{R}^{d\times d}$ it follows that
\begin{align}
    \|\ZZ\AA - \DD\|_F + \frac{\lambda}{2}\|\ZZ - \ZZref\|_F &= \|\vec(\ZZ\AA - \DD)\|_2 + \frac{\lambda}{2}\|\vec(\ZZ - \ZZref)\|_2  \notag\\
    &= \|(\II_d\otimes\AA)\zz - \dd\|_2 + 
    \frac{\lambda}{2}\|\vec(\zz - \zzref)\|_2 
\end{align}
Here $(\II_d \otimes \AA)_{ij} = \delta_{ij} \AA\in\RR^{r\times s}$, $\zz = \vec(\ZZ)\in\RR^s, \zzref = \vec(\ZZref)\in\RR^s$, and $\dd = \vec(\DD) \in\RR^r$. We define $\SS_d$ as the matrix where the columns form an orthonormal basis for a $\bar d$-dimensional subspace of $\RR^s$, where $\bar{d}=\dfrac{d(d+1)}{2}$. 
By using the symmetry of $\XX$, letting $\zz = \SS_d\yy$, $\zzref = \SS_d\yyref$ and $\HH = (\II_d\otimes\AA)\SS_d$,
we are able to obtain an regularized LS problem equivalent to~\eqref{eq:reg_sym_procrustres_app} as follows,
\begin{align}\label{eq:rls}
    \min_{\yy\in\RR^{\bar{d}}}
    \|\HH\yy - \dd\|_2 + 
    \frac{\lambda}{2}\|\yy - \yyref\|_2.
\end{align}
Here we have used the fact that for an orthonormal matrix $\SS_d$ we have $\|\SS_d(\yy - \yyref)\|_2 = \|\yy - \yyref\|_2$.
Likewise, we can identify the perturbed problem~\eqref{eq:perturbed_procrustes_A} with perturbations 
\begin{align}
    \HH \to \HH + \delta\HH, \quad \dd \to \dd + \delta\dd, \quad \yy \to \tilde\yy
\end{align}
in~\eqref{eq:rls}, where 
\begin{align}
    \delta \HH = (\II_d\otimes \delta \AA)\SS_d, 
    \quad 
    \delta \dd = \vec(\delta \DD), 
    \quad
    \SS_d\hat\yy = \vec(\tilde\ZZ).
\end{align} 
Furthermore, the solution to~\eqref{eq:rls} can be written as 
\begin{align}
    \yy^* = \big(\HH^T\HH + \lambda\II/2 \big)^{-1}\HH^T\dd.
\end{align}
Then, the solution is perturbed to \[
\hat\yy = \yy + \delta\yy =  \big((\HH+\delta\HH)^T(\HH+\delta\HH) + \lambda\II/2 \big)^{-1}(\HH+\delta\HH)^T(\dd+\delta\dd).
\]
After the reduction of~\eqref{eq:reg_sym_procrustres_app} to~\eqref{eq:rls},
we apply Lemma~\ref{lem:stab} to~\eqref{eq:rls}, which yields
\begin{align}\label{eq:direct_order}
    \|\hat\yy - \yy^*\|_2 \leq  \mathcal{O}\Big(\frac{ \|\delta\HH\|_2 + \|\delta\dd\|_2 }{\lambda}\Big).
\end{align}
Also, by the definition of $\HH$ we have $\|\delta\HH\|_2 = \|\delta\AA\|_2$ where $\AA$ is defined in the original problem~\eqref{eq:reg_sym_procrustres_app}. Hence, \eqref{eq:direct_order} reads
\begin{align}
    \|\hat\yy - \yy^*\|_2 \leq  \mathcal{O}\Big(\frac{ \|\delta\AA\|_2 + \|\delta\dd\|_2 }{\lambda}\Big).
\end{align}
Further, we can write 
\begin{align}
    \big\|\hat\ZZ - \ZZ_\lambda\big\|_F &= 
    \|\vec\big(\hat\ZZ\big) - \vec\big(\ZZ_\lambda\big)\|_2  \notag\\
    &= \|\SS_d\tilde\yy - \SS_d\yy^*\|_2 \notag\\
    &= 
    \|\hat\yy - \yy^*\|_2 \notag\\ 
    &\leq  \mathcal{O}\Big(\frac{ \|\delta\AA\|_2 + \|\delta\dd\|_2 }{\lambda}\Big),
\end{align}
where the third equality holds since the columns of $\SS_d$ form an orthonormal basis. This concludes the proof.
\end{proof}

Furthermore, we remark that Lemma~\ref{lem:perturb_lambda} and Lemma~\ref{lem:stab_ad} together concludes the proof of Proposition~\ref{prop:stab}.

\clearpage

\section{Numerical Experiments} \label{sec:numerics}

\subsection{Datasets}

We used several UCI datasets, whose main characteristics are summarized in Table \ref{tab:datasets}. In the case of the \textit{P53 mutant} dataset, we reduce its size to avoid memory problems. We kept all labels where $y=1$ (153 instances), and merge them with the 5000 first data points.

\begin{table}[ht]
    \centering
    \begin{tabular}{llccc}
        \toprule
        Dataset name & Tag & $\#$ features & $\#$ data points & Section \\
        \midrule
        Madelon \citep{guyon2008feature} & Madelon & 500 & 4400 & \ref{sec:numexp_madelon}\\
        Internet Advertisements \citep{kushmerick1999learning} & Ad & 1558 & 3279 & \ref{sec:numexp_ad}\\
        QSAR oral toxicity \citep{ballabio2019integrated} & Qsar & 1024 & 8992 & \ref{sec:numexp_qsar}\\
        p53 Mutants Data Set \citep{danziger2006functional} & P53 mutant & 5406 & 5000 & \ref{sec:numexp_mutant}\\
        \bottomrule
    \end{tabular}
    \caption{Summary of the datasets used in the numerical experiments.}
    \label{tab:datasets}
\end{table}

\subsection{Setting}

We consider the regression problem
\begin{equation}
    \textstyle\min_{\xx\in\R^{d}} f(\xx) \defas \frac{1}{N}\sum_{i=0}^N \ell(\aa_i^T\xx,\bb_i) + \frac{\tau}{2} \|x\|_2^2, \label{eq:regression_equation}
\end{equation}
where $\ell(\cdot,\cdot)$ is either a quadratic or a logistic loss. The pair $(\AA,\bb)$ is a dataset, where $a_i \in \mathbb{R}^d$ is a data point composed by $d$ features, and $b_i$ is the label of the $i^{th}$ data point. We solve the problem using deterministic and stochastic gradient, whose parameters are described in Table \ref{tab:value_param_setting}. The optimal value of \eqref{eq:regression_equation} are obtained using the \textsc{Matlab} package \texttt{minfunc} from \citep{schmidt2005minfunc}.

\begin{table}[ht]
    \centering
    \begin{tabular}{lcc}
        \toprule
        Parameter & Deterministic setting & Stochastic setting \\
        \midrule
        $\tau$ & \texttt{1e-9} (ill-conditioned problem) & \texttt{1e-2} \\
        Descent direction & Full gradient & SAGA (see \citep{defazio2014saga}) \\
        Batch size & Full batch & 64 \\ 
        Limited memory $m$ & $10$ and $\infty$ & $25$ \\
        Line-search & None or approximate dichotomy & None \\
        $\BBref^{-1}$ and $\HHref$ (No LS) & $\frac{1}{\|A\|_2^2}$ (quad.), $\frac{1}{4\|A\|_2^2}$ (logistic) & $\frac{1}{3\max_i L_i}$ \citep{defazio2014saga} \\
        $\BBref^{-1}$ and $\HHref$ (with LS) & 1 & N/A. \\
        Rel. reg. $\bar\lambda$ (if applicable) & \texttt{1e-20} (quad), \texttt{1e-10} (logistic)  & \texttt{1e-2}\\
        Max. iteration & \texttt{250} (full batch) & \texttt{1e4} (mini-batches) \\
        \bottomrule
    \end{tabular}
    \caption{Parameters used to optimize \eqref{eq:regression_equation}}
    \label{tab:value_param_setting}
\end{table}

\subsection{Observation}

\paragraph{Unitary step VS line search.} Most of the presented method present a divergent behavior when we do not apply line search. However, it seems that the Multisecant Type-I method is the most robust one, converging for almost all instances. In fact, it seems that adding a line-search to method slow it down - probably because the optimal stepsize is close to one, but it takes time to have the guarantee. When it comes to line-search methods, there is no clear method whose speed is superior. Surprisingly, in both cases, the Type-II symmetric multisecant method seems to be the worst one (after gradient descent).

\paragraph{Stochastic optimization} As it may be expected, the symmetric multisecant type-I is the fastest method. Indeed, our updates have provably better robustness, and the type-I symmetric multisecant update is the best one amongst all method with unitary step-size. However, its performance are not much different than gradient descent. Moreover, the author indicate that the mini-batch size plays an important role in the convergence of the method, as smaller batches have too much variance. We suspect there is a trade-off to improve the speed of the method, where we should balance the size of the batch and the number of secant equations.

\clearpage

\subsection{Spectrum Recovery on Madelon (Quadratic Loss)}

\begin{figure}[ht]
    \centering
    \includegraphics[width=0.49\textwidth]{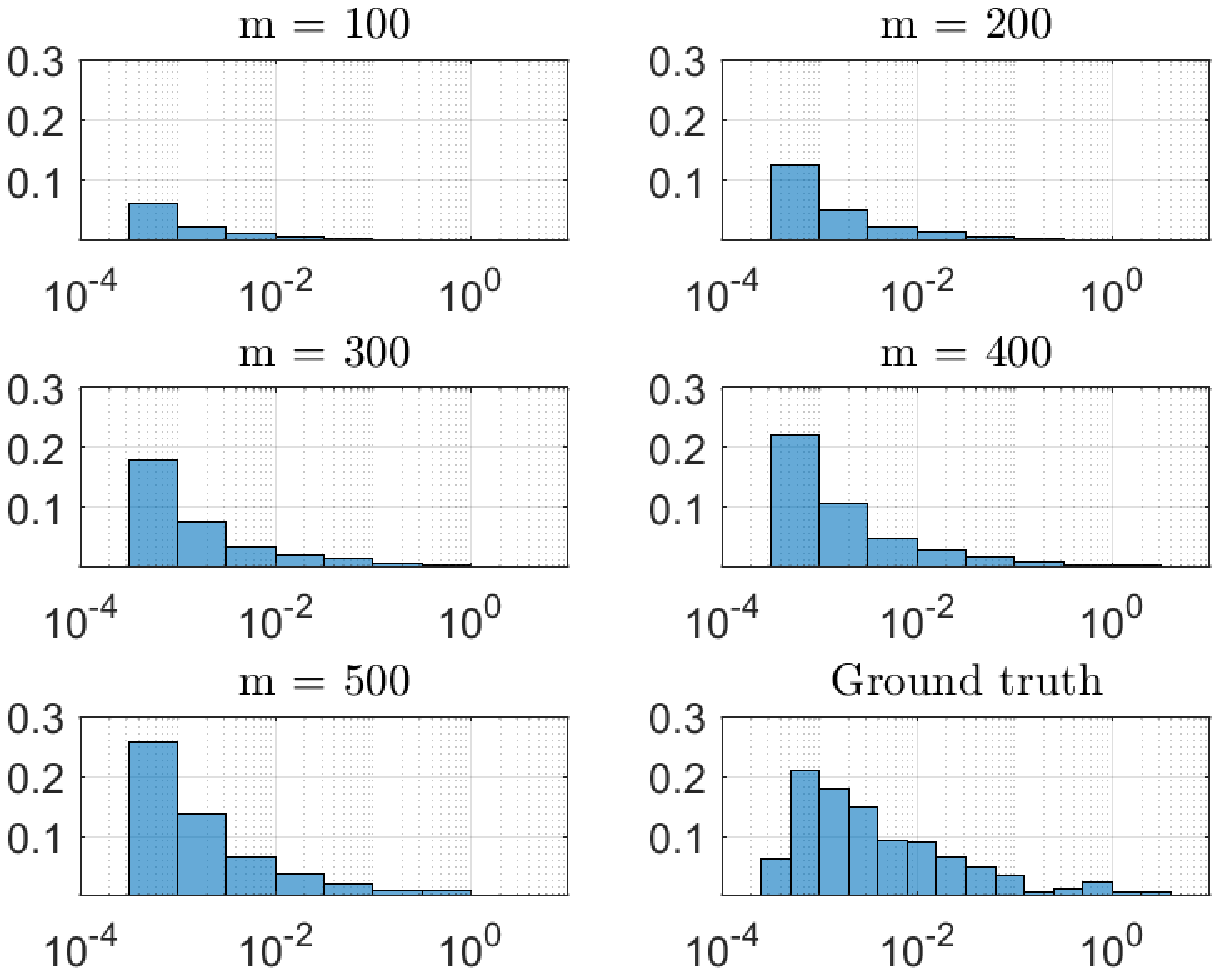}
    \includegraphics[width=0.49\textwidth]{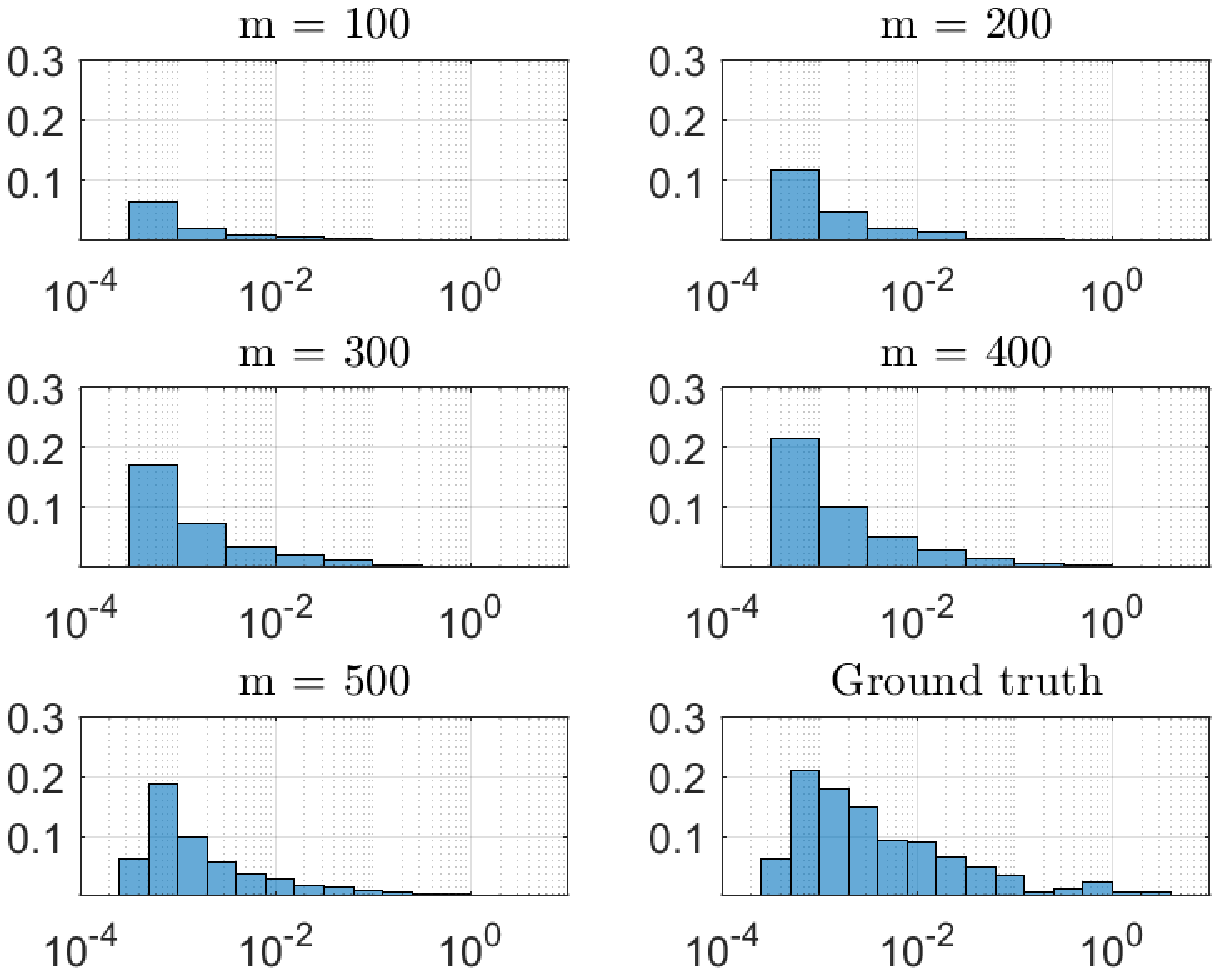}
    \includegraphics[width=0.49\textwidth]{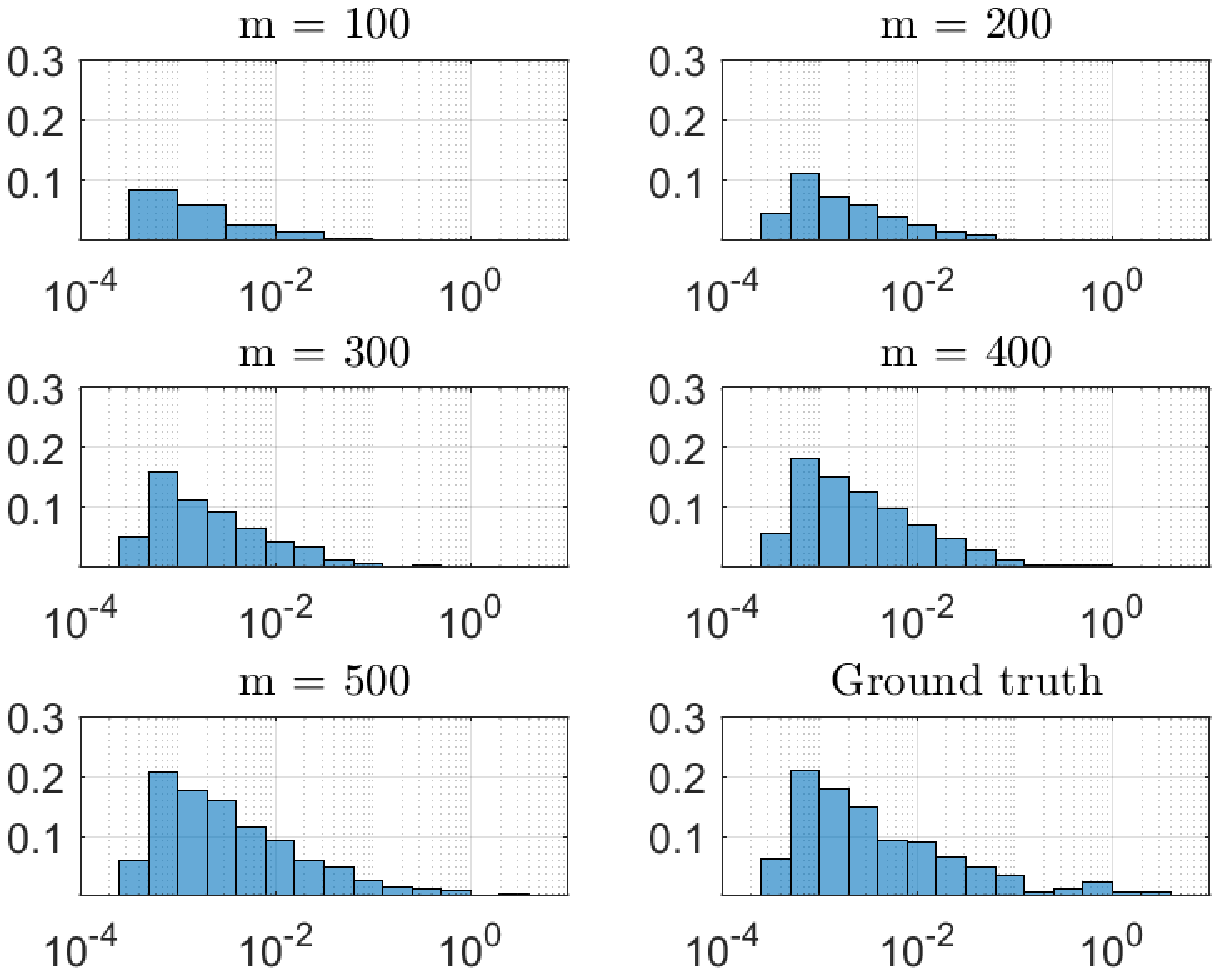}
    \includegraphics[width=0.49\textwidth]{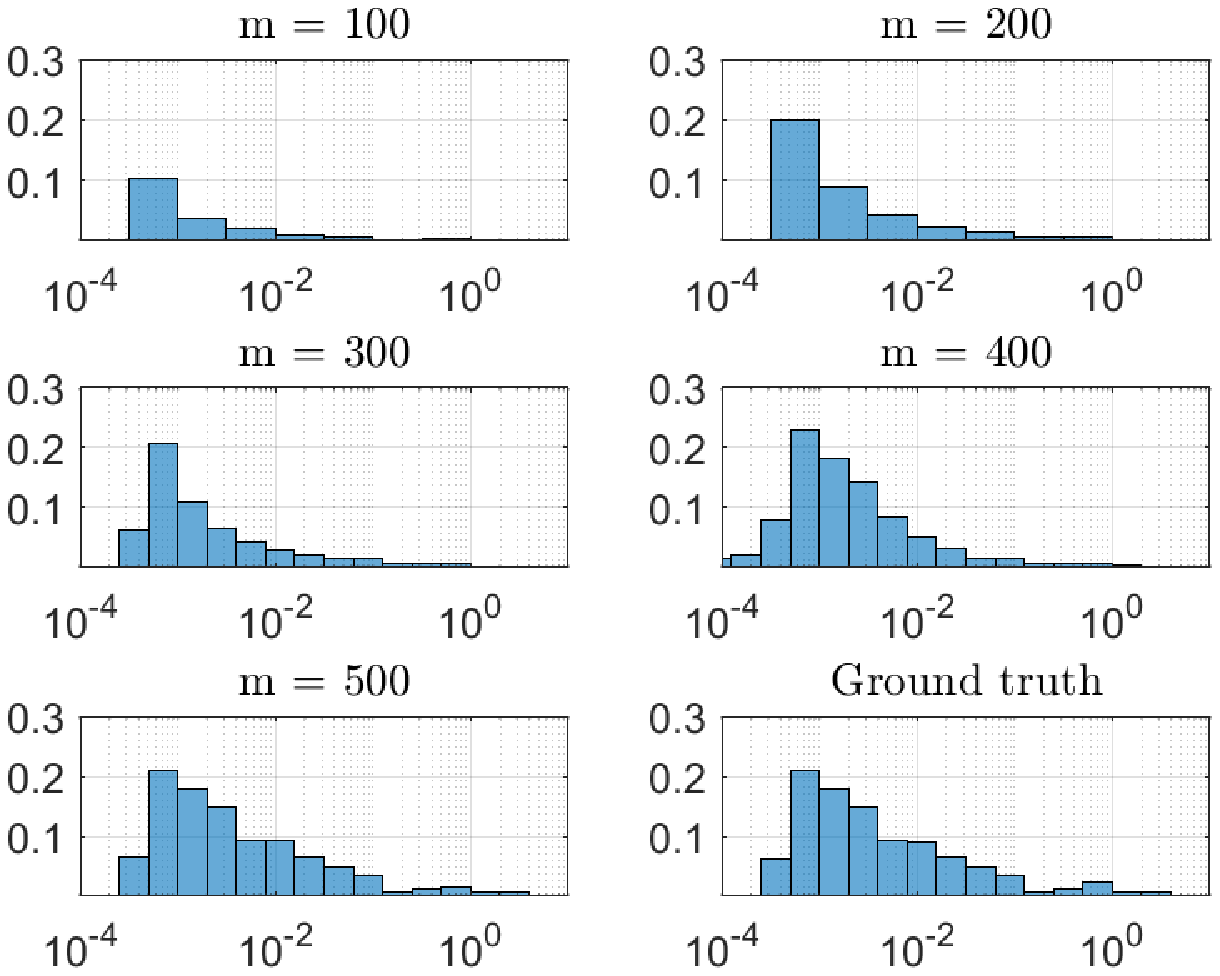}
    \caption{Histogram of the eigenvalues of the estimate $\HH_k$ or $\BB_k^{-1}$ in the function on the iteration counter (i.e., the number of secant equations), when optimizing the square loss on the Madelon dataset without regularization. \textbf{Top left:} Multisecant Broyden Type-I, \textbf{Top right:} Multisecant Broyden Type-II, \textbf{Bottom left:} Type-I symmetric multisecant, \textbf{Bottom right:} Type-II symmetric multisecant. For the non-symmetric updates, we took the real part of the eigenvalues. We removed from the histogram the spike of eigenvalues associated to $\HHref$ or $\BBref^{-1}$ (initialized at $1/L$, the smoothness constant of the function). It seems that the spectrum converges faster to the ground truth when we use symmetric updates. We did not report BFGS as the method is non-convergent with unitary stepsize.}
    \label{fig:spectrum_iterates}
\end{figure}

\clearpage

\subsection{Organization of figures}
\begin{figure}[ht]
    \centering
    \includegraphics[scale=0.55]{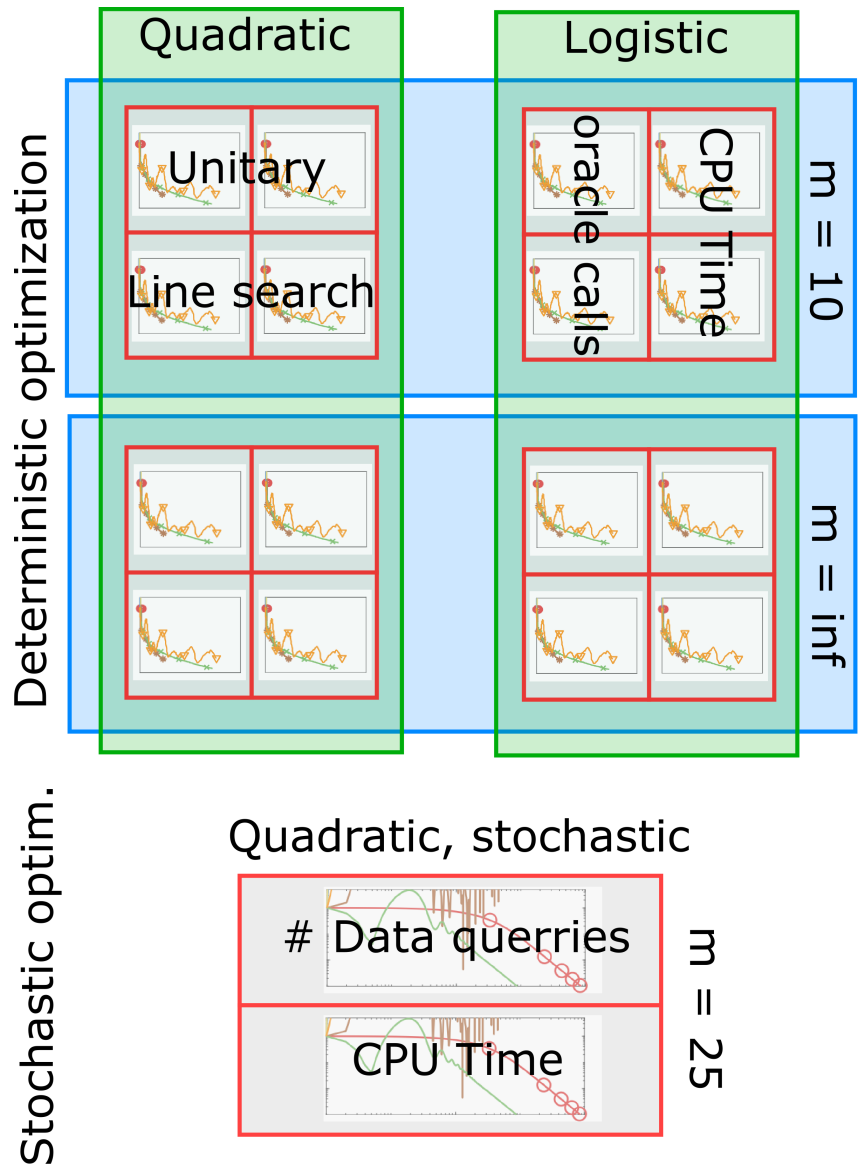}
    \caption{Organization of figures for the numerical experiments.}
    \label{fig:figures_orga}
\end{figure}

\subsection{Legend}

\begin{figure}[h!t]
    \centering
    \includegraphics[width=1\linewidth]{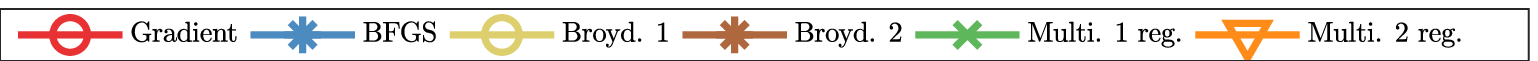}
    \caption{Legend for all subsequent figures}
    \label{fig:legend}
\end{figure}

\clearpage

\subsection{Madelon} \label{sec:numexp_madelon}
\begin{figure}[h!]
    \centering
    \includegraphics[width=0.48\textwidth]{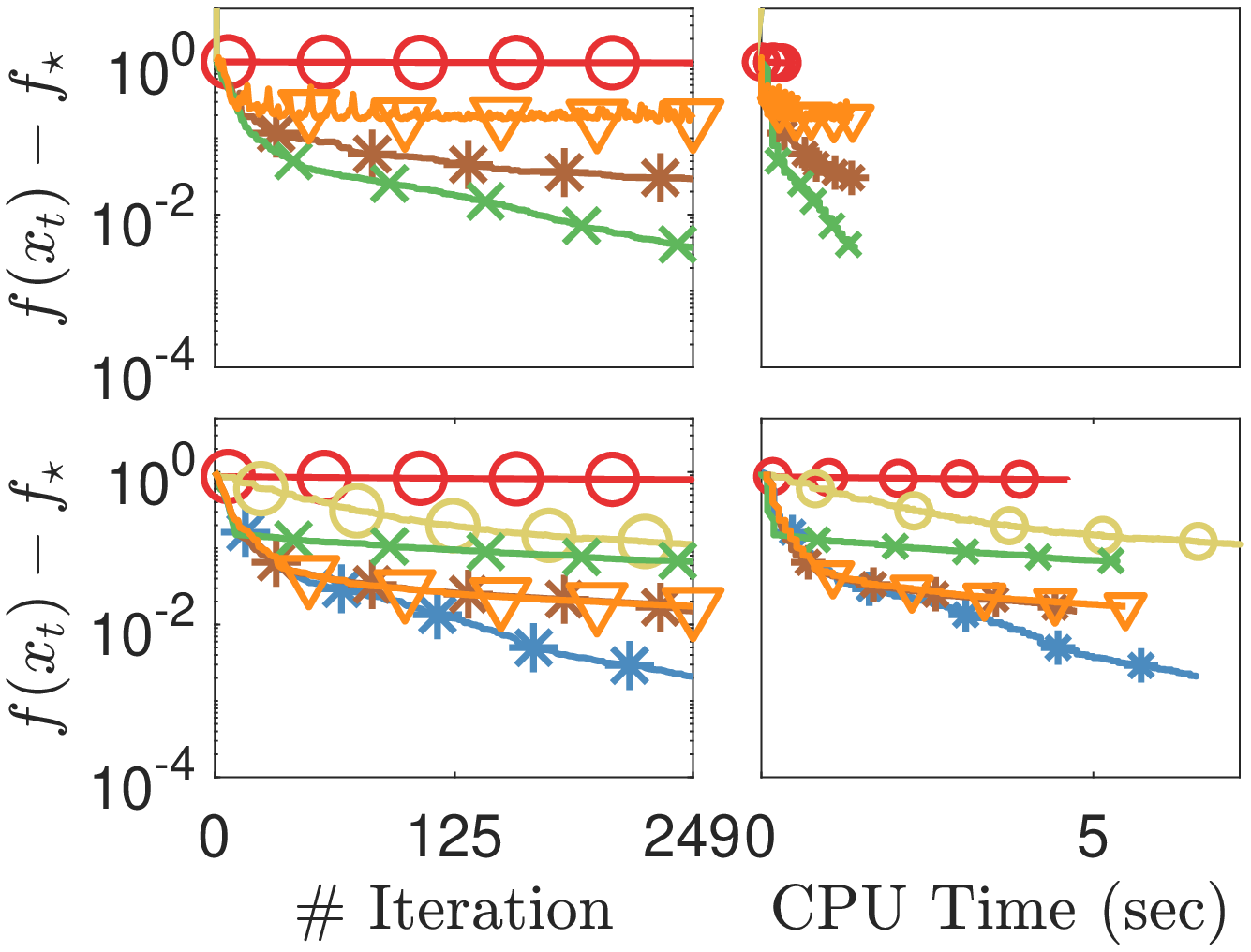}
    \includegraphics[width=0.48\textwidth]{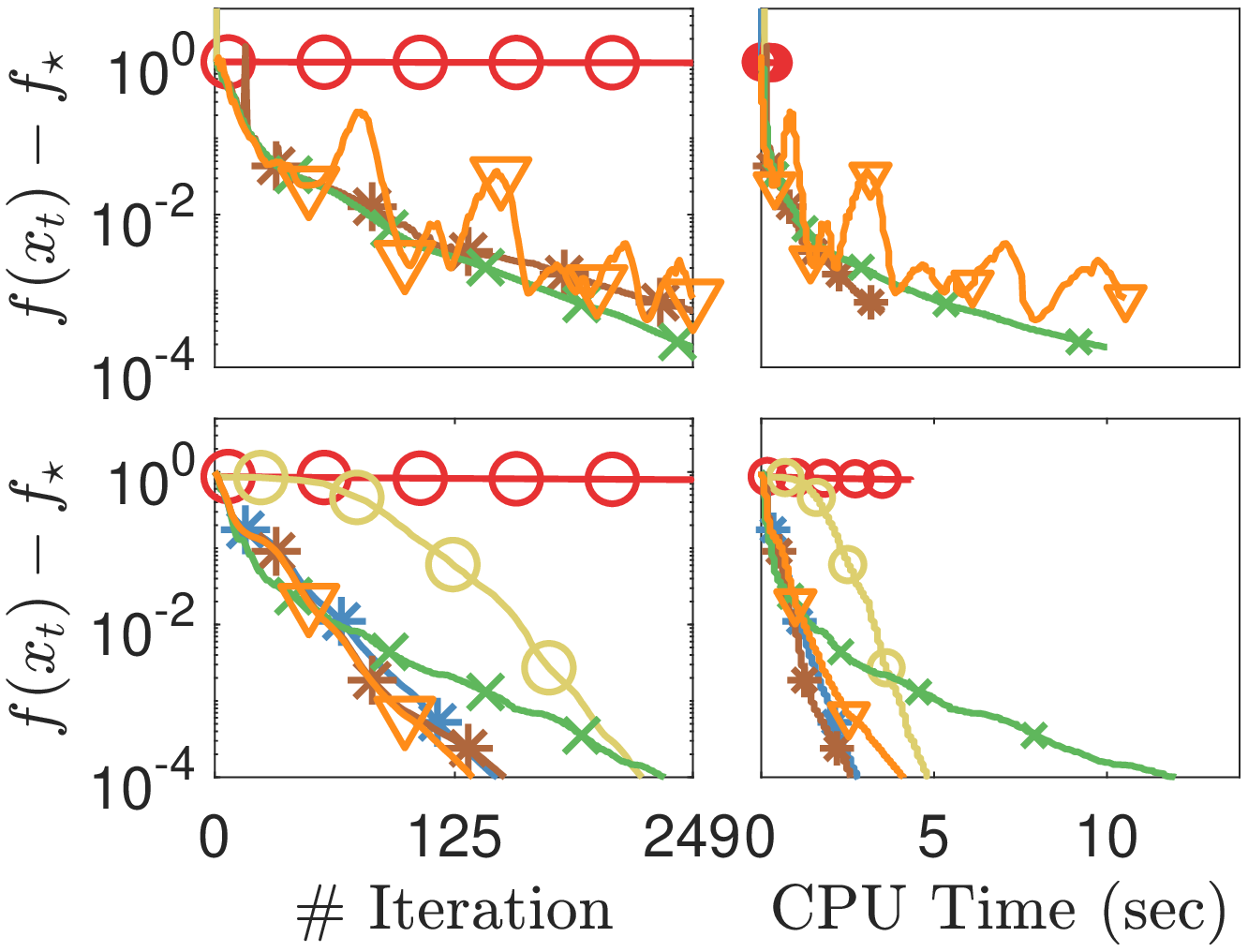}\\
    \includegraphics[width=0.48\textwidth]{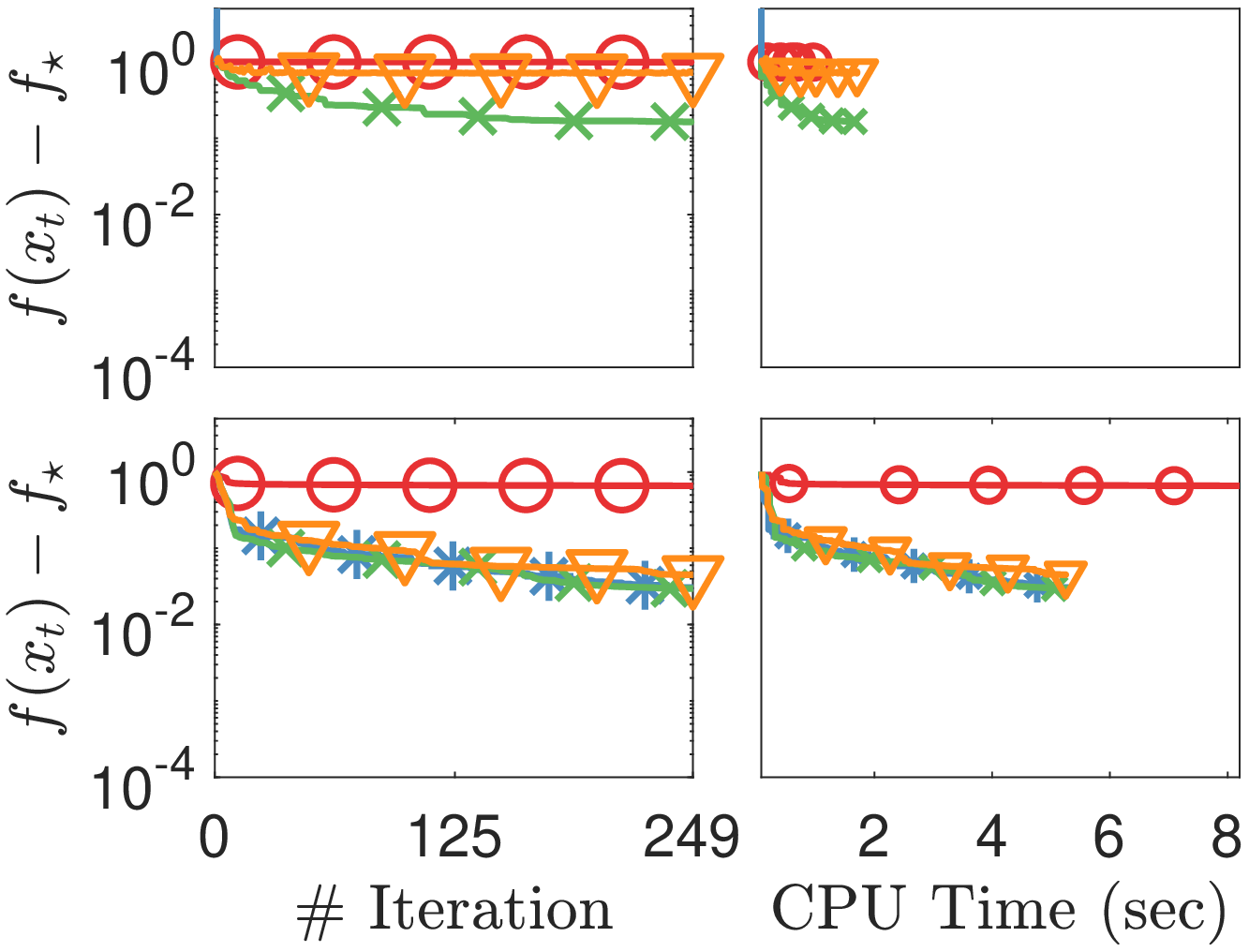}
    \includegraphics[width=0.48\textwidth]{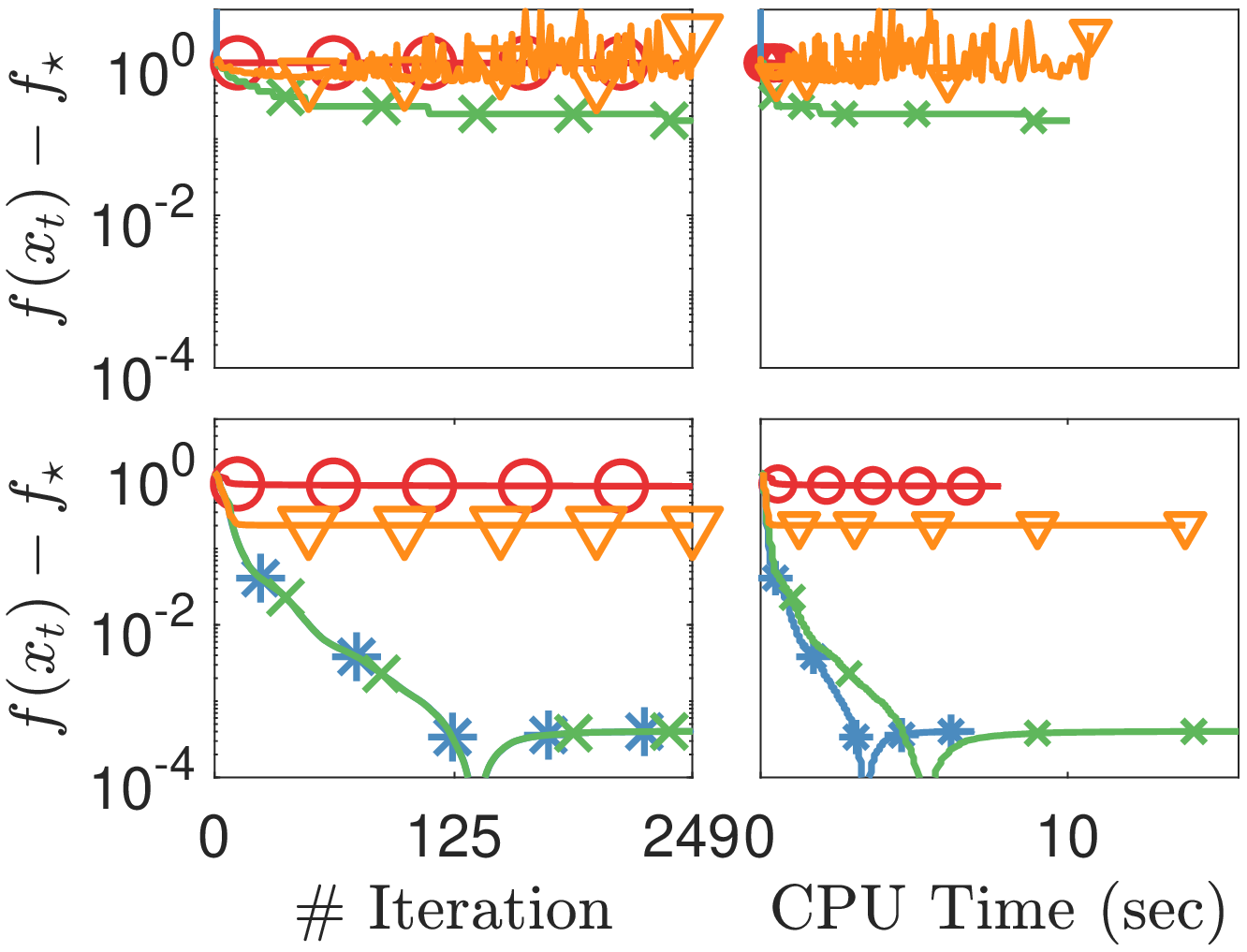}\\
    \includegraphics[width=0.5\textwidth]{figures/generated/experiment_stoch_madelon_square_m_25.eps}
\end{figure}

\clearpage
\subsection{Ad} \label{sec:numexp_ad}
\begin{figure}[h!]
    \centering
    \includegraphics[width=0.48\textwidth]{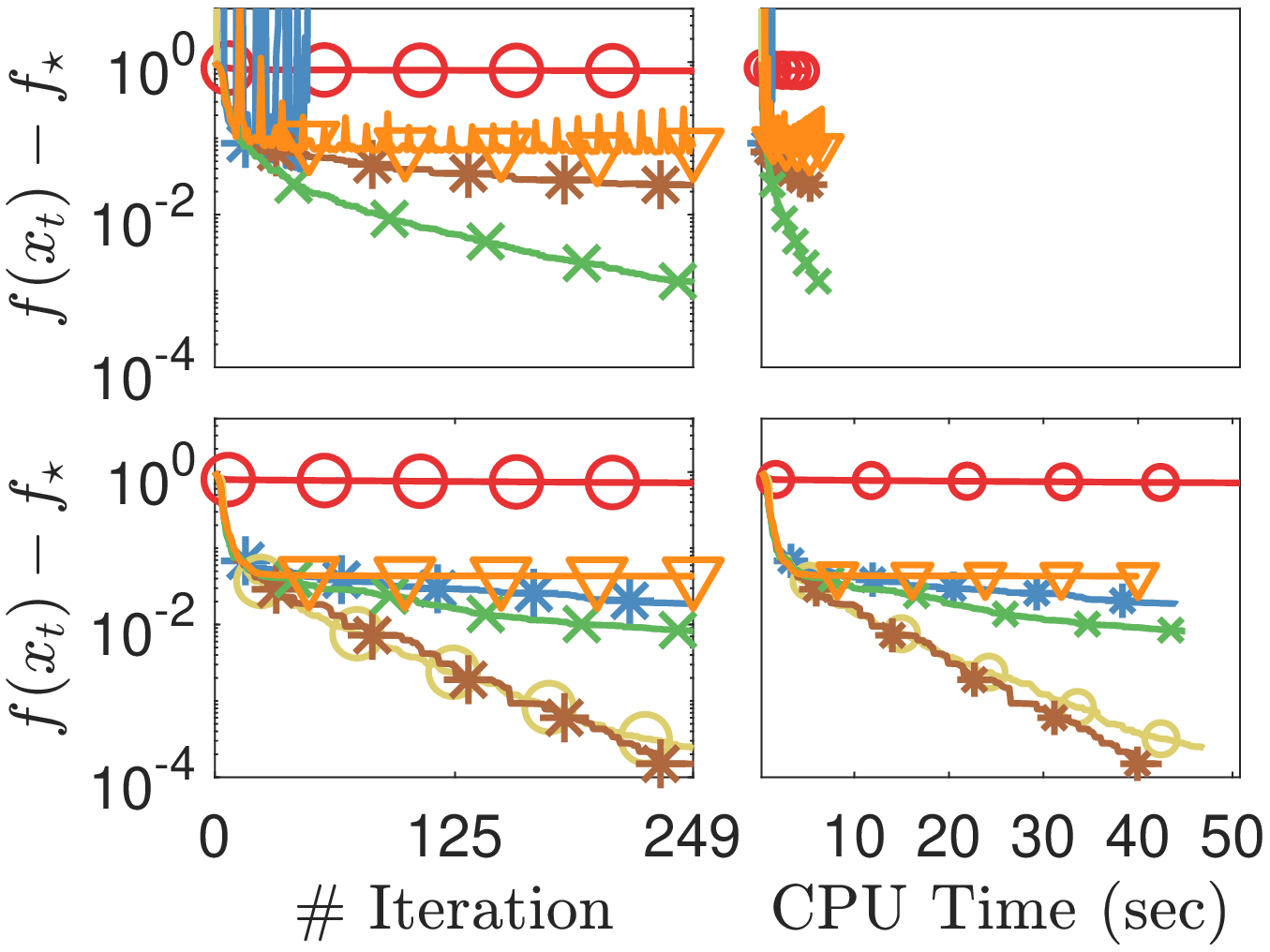}
    \includegraphics[width=0.48\textwidth]{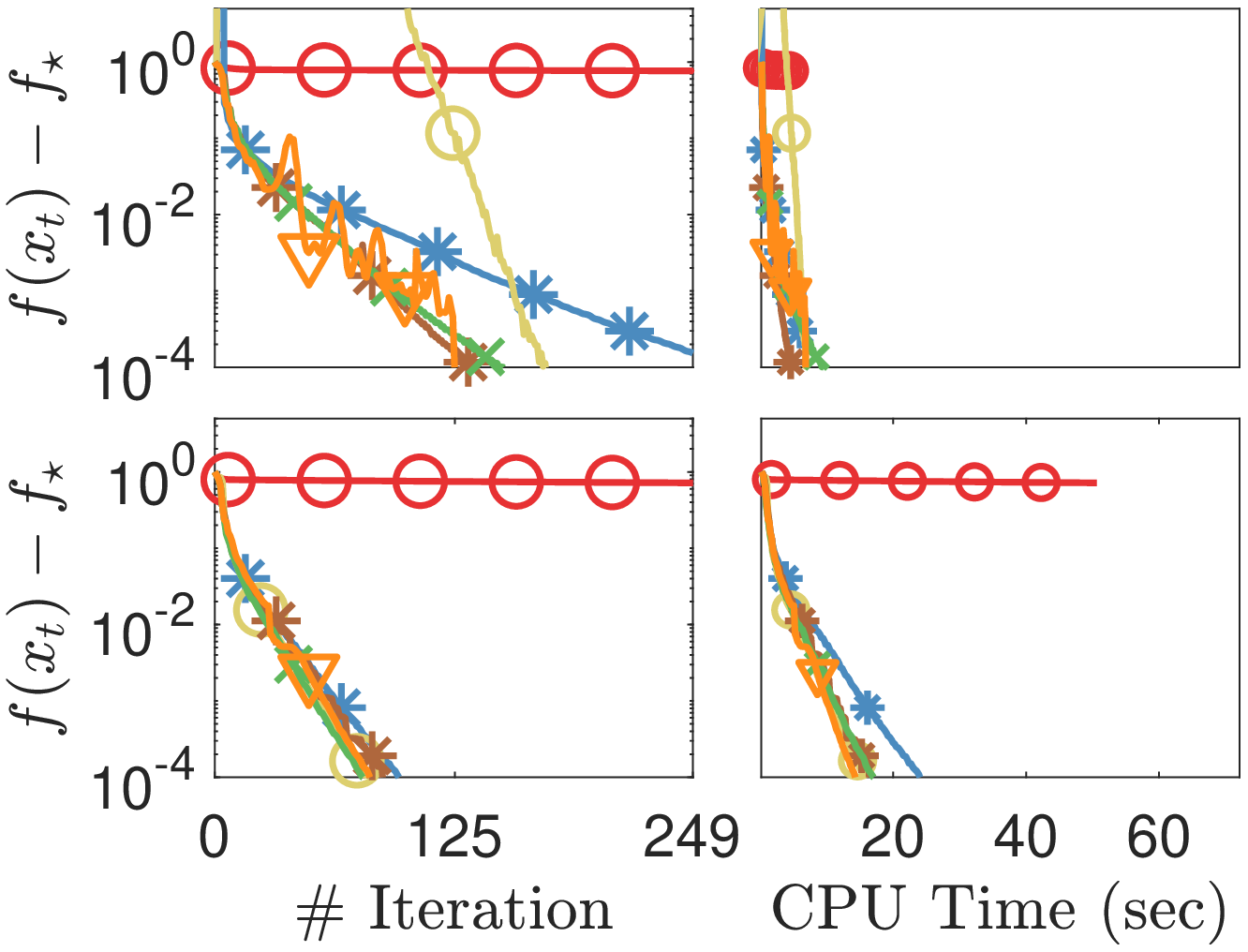}\\
    \includegraphics[width=0.48\textwidth]{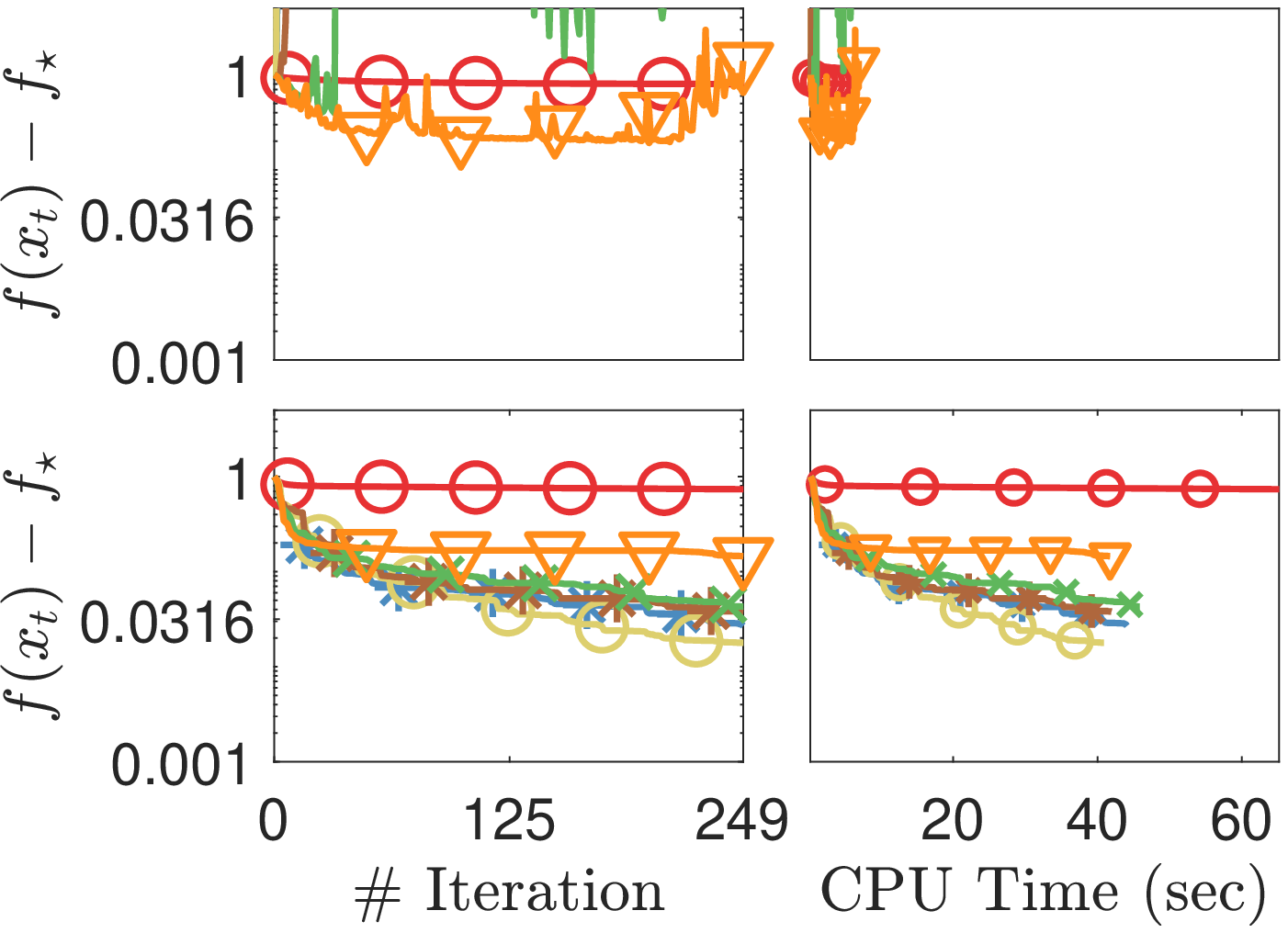}
    \includegraphics[width=0.48\textwidth]{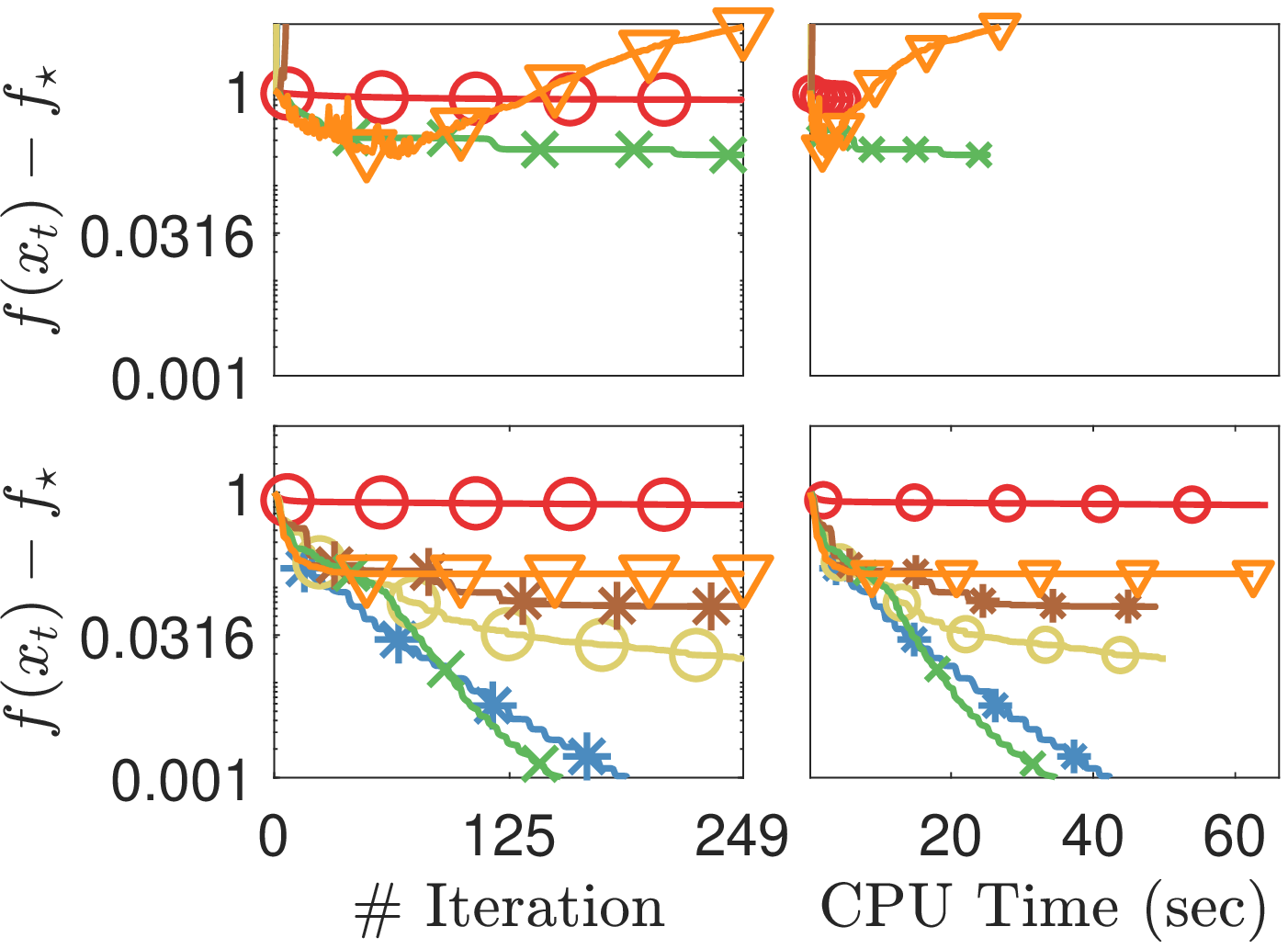}\\
    \includegraphics[width=0.5\textwidth]{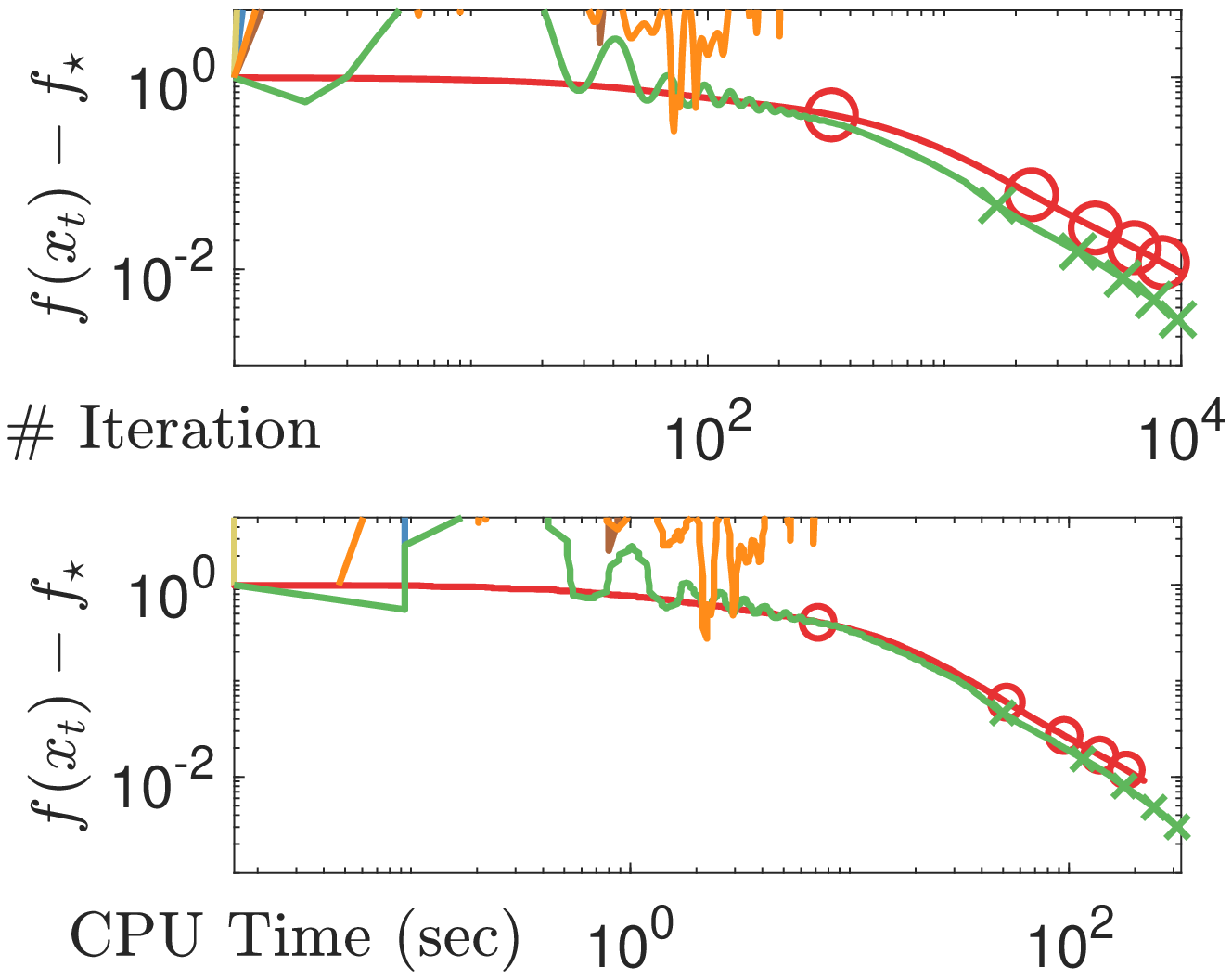}
\end{figure}

\clearpage
\subsection{Qsar} \label{sec:numexp_qsar}
\begin{figure}[h!]
    \centering
    \includegraphics[width=0.48\textwidth]{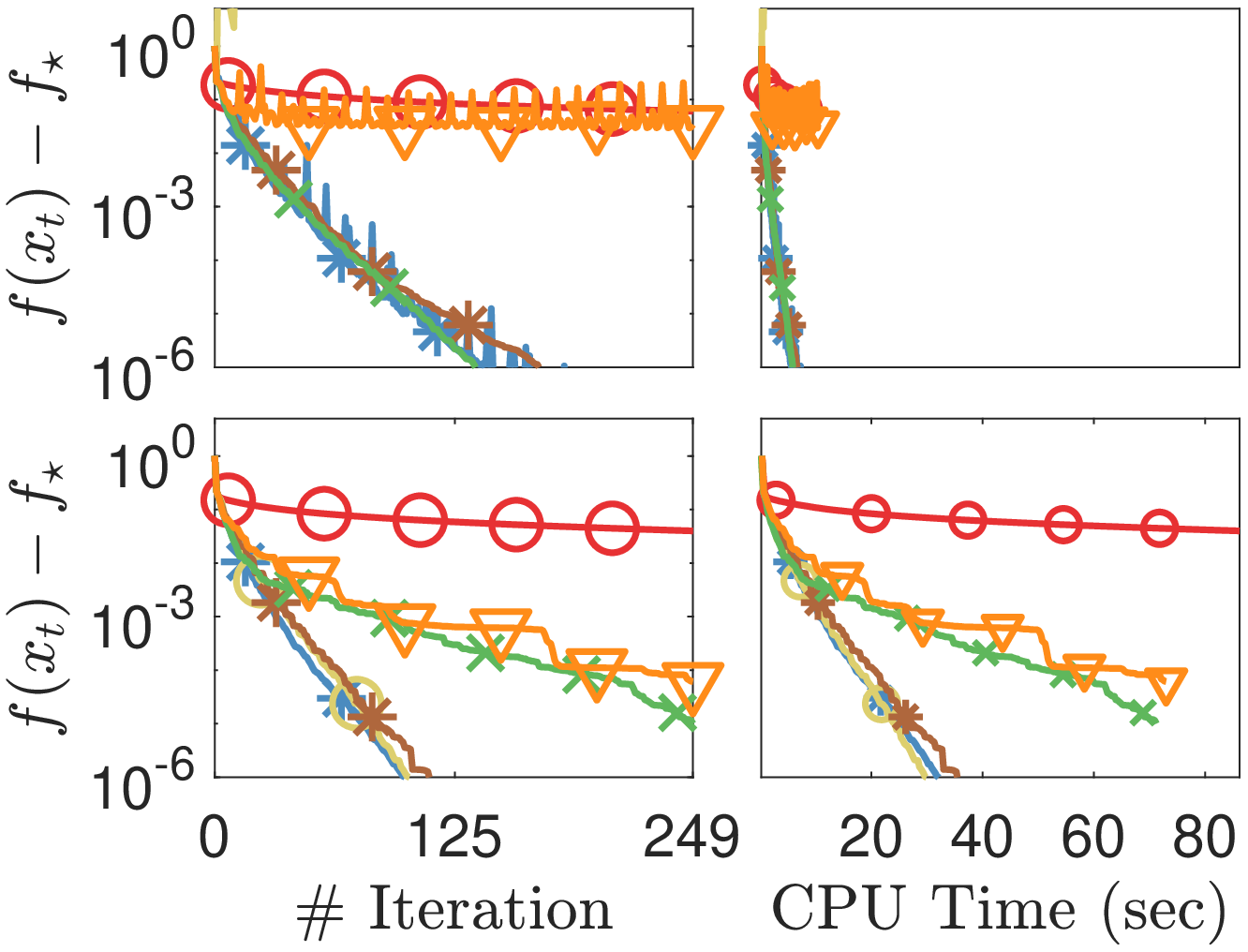}
    \includegraphics[width=0.48\textwidth]{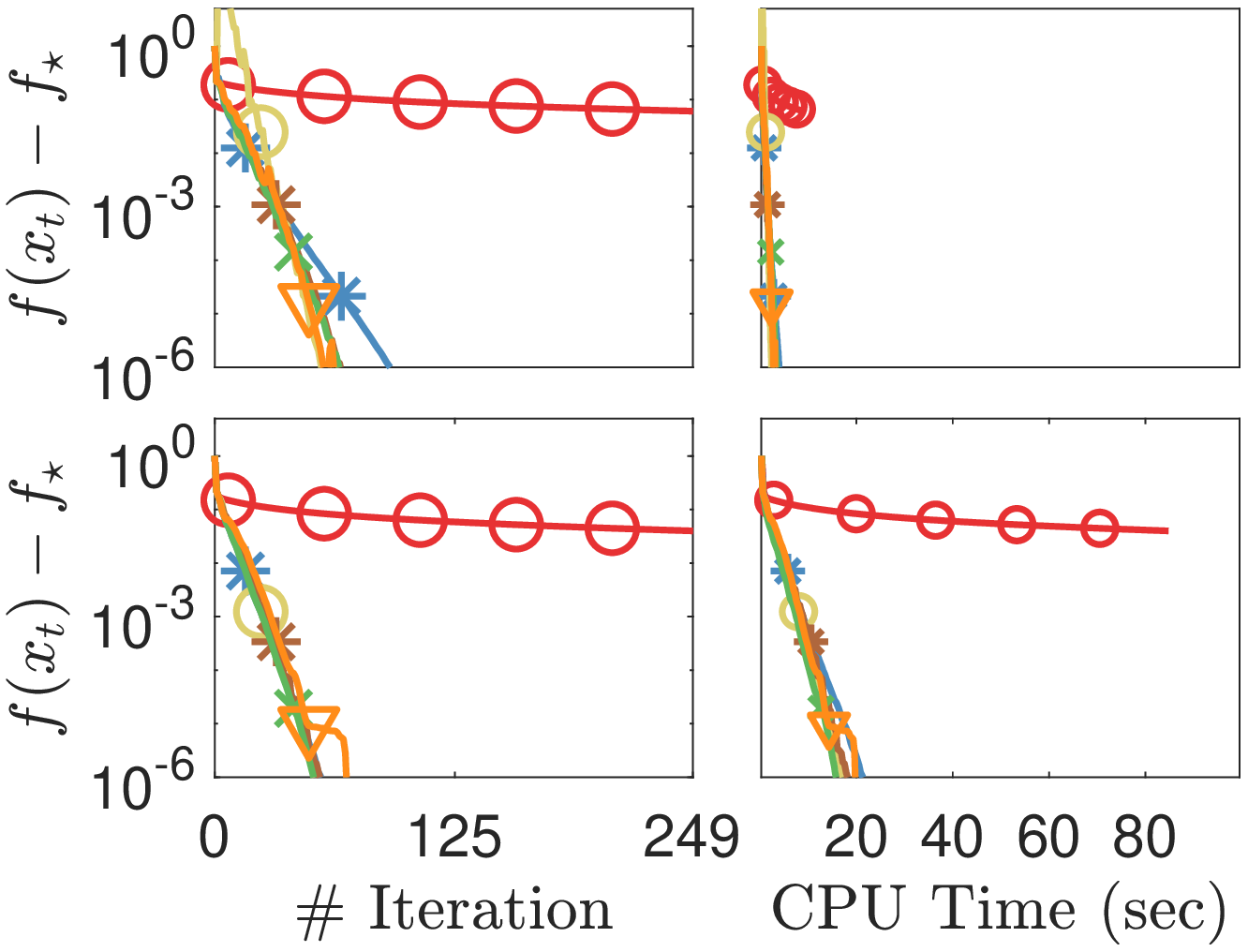}\\
    \includegraphics[width=0.48\textwidth]{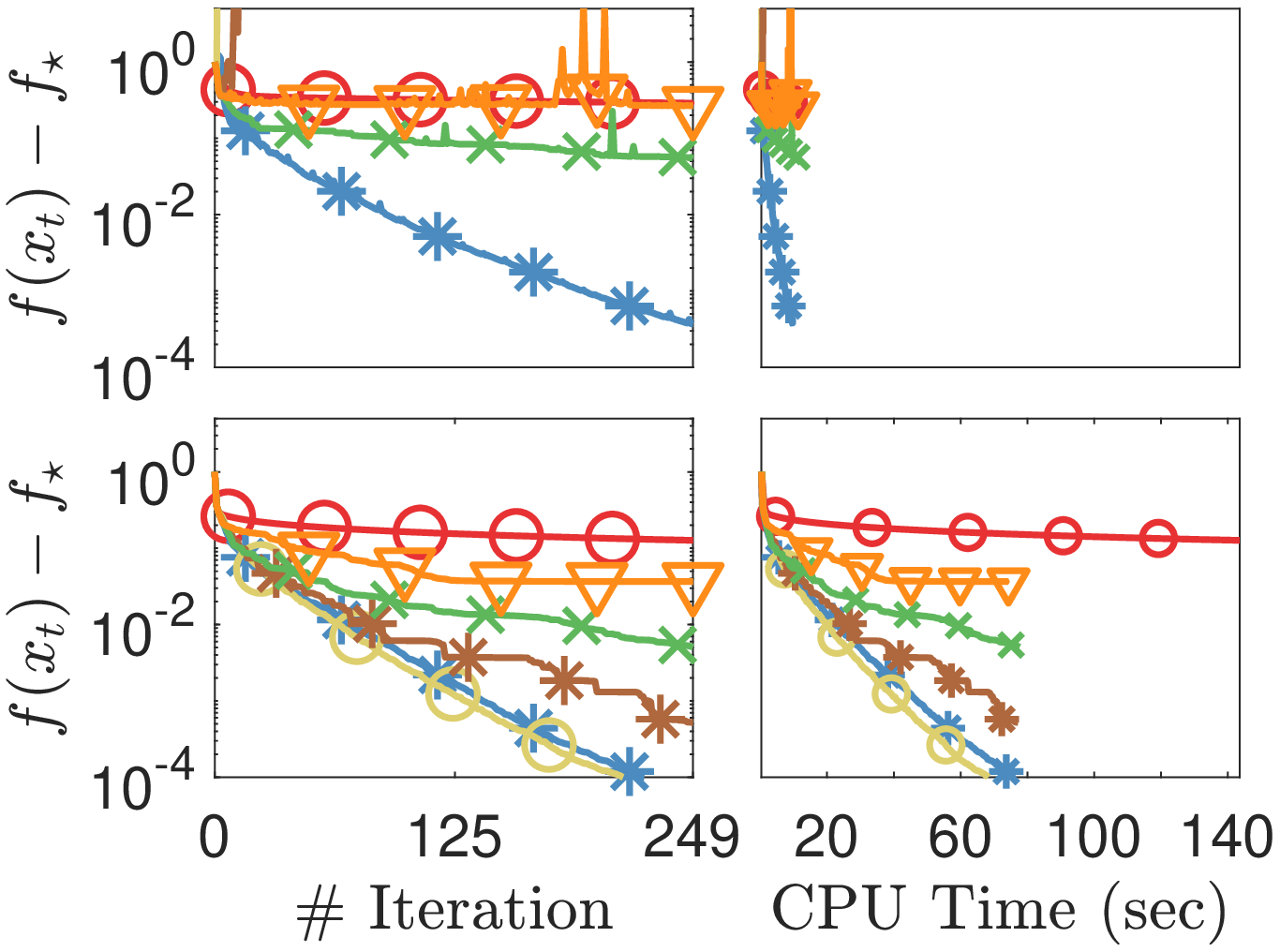}
    \includegraphics[width=0.48\textwidth]{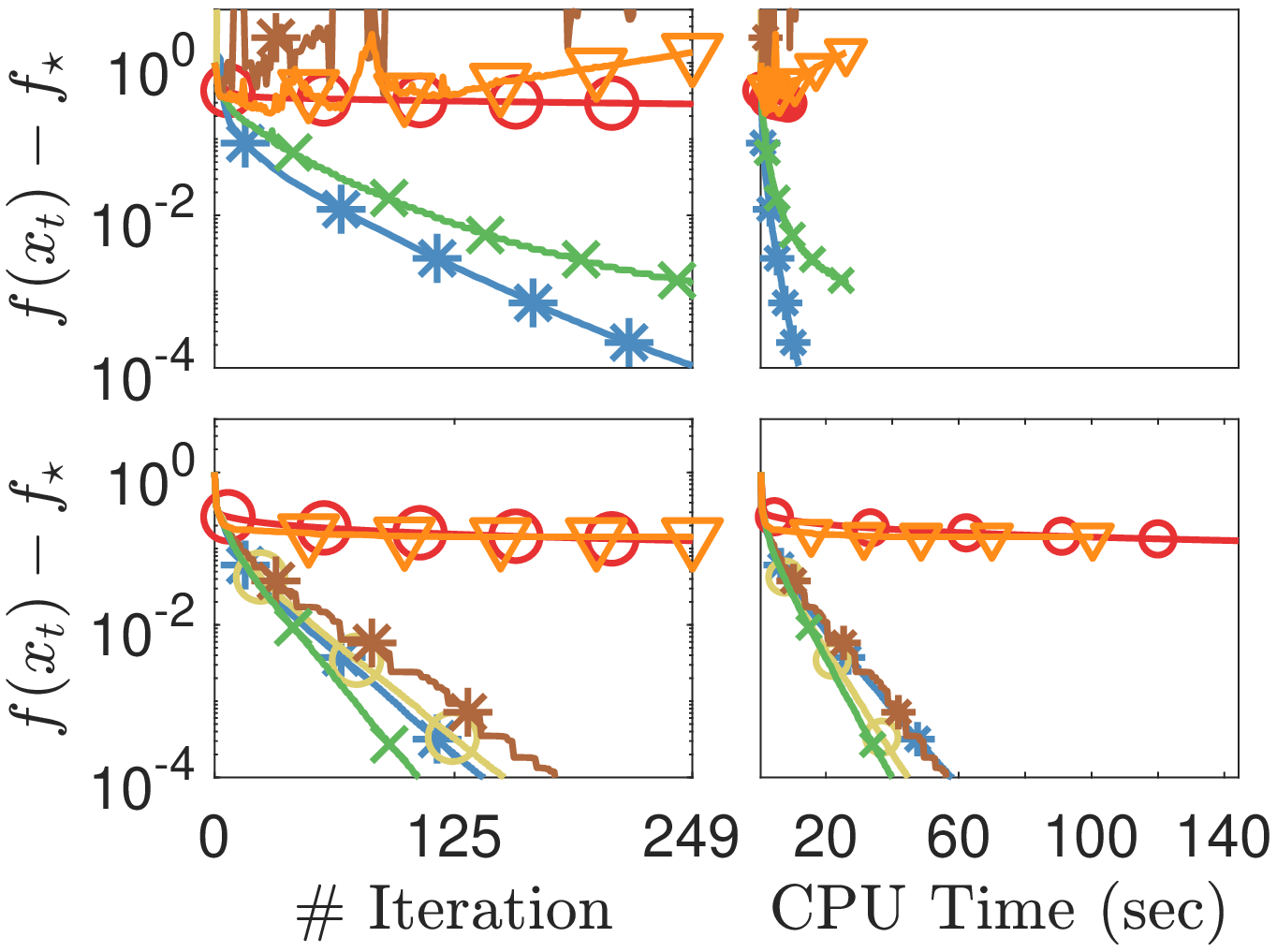}\\
    \includegraphics[width=0.5\textwidth]{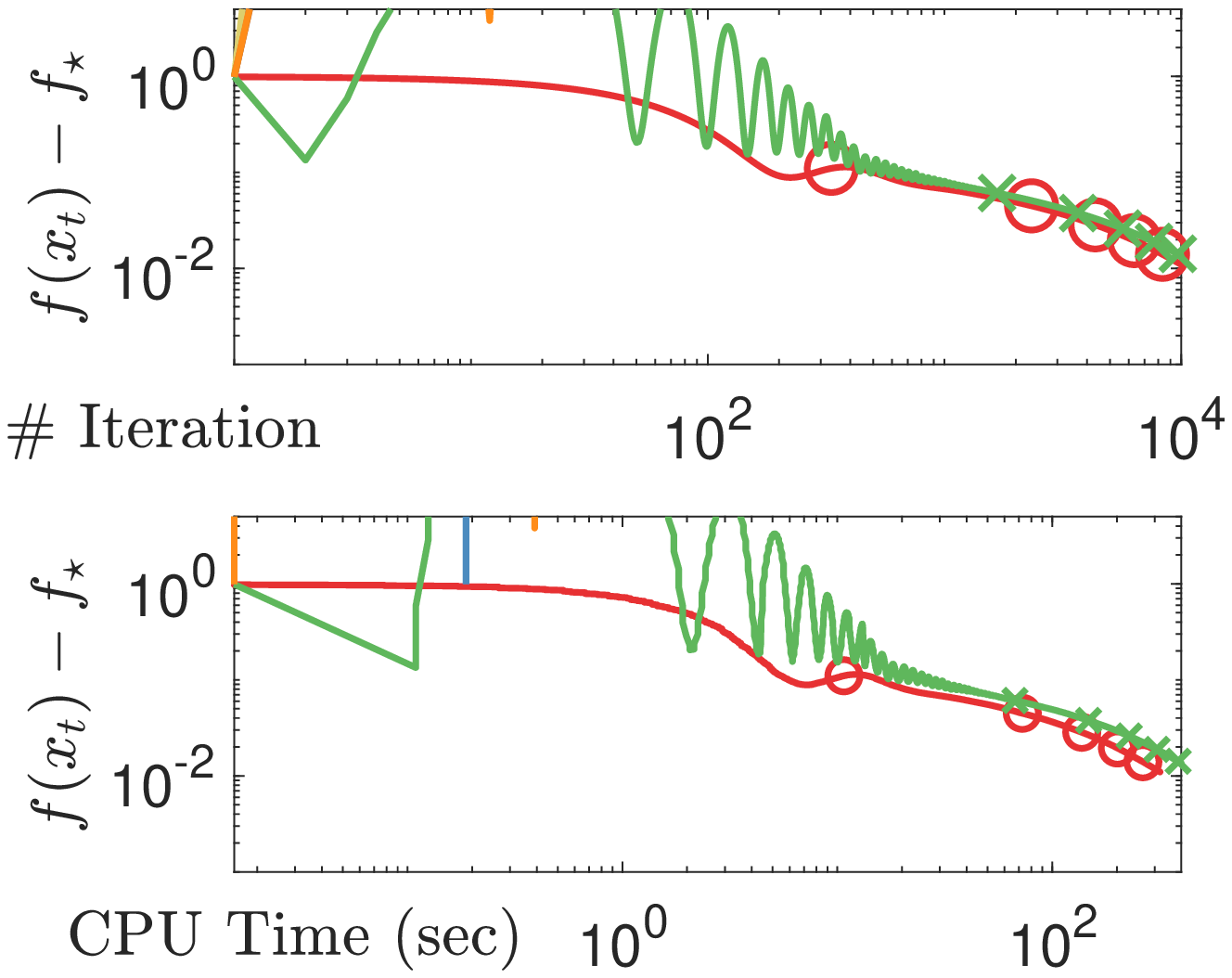}
\end{figure}

\clearpage
\subsection{P53 Mutant} \label{sec:numexp_mutant}
\begin{figure}[h!]
    \centering
    \includegraphics[width=0.48\textwidth]{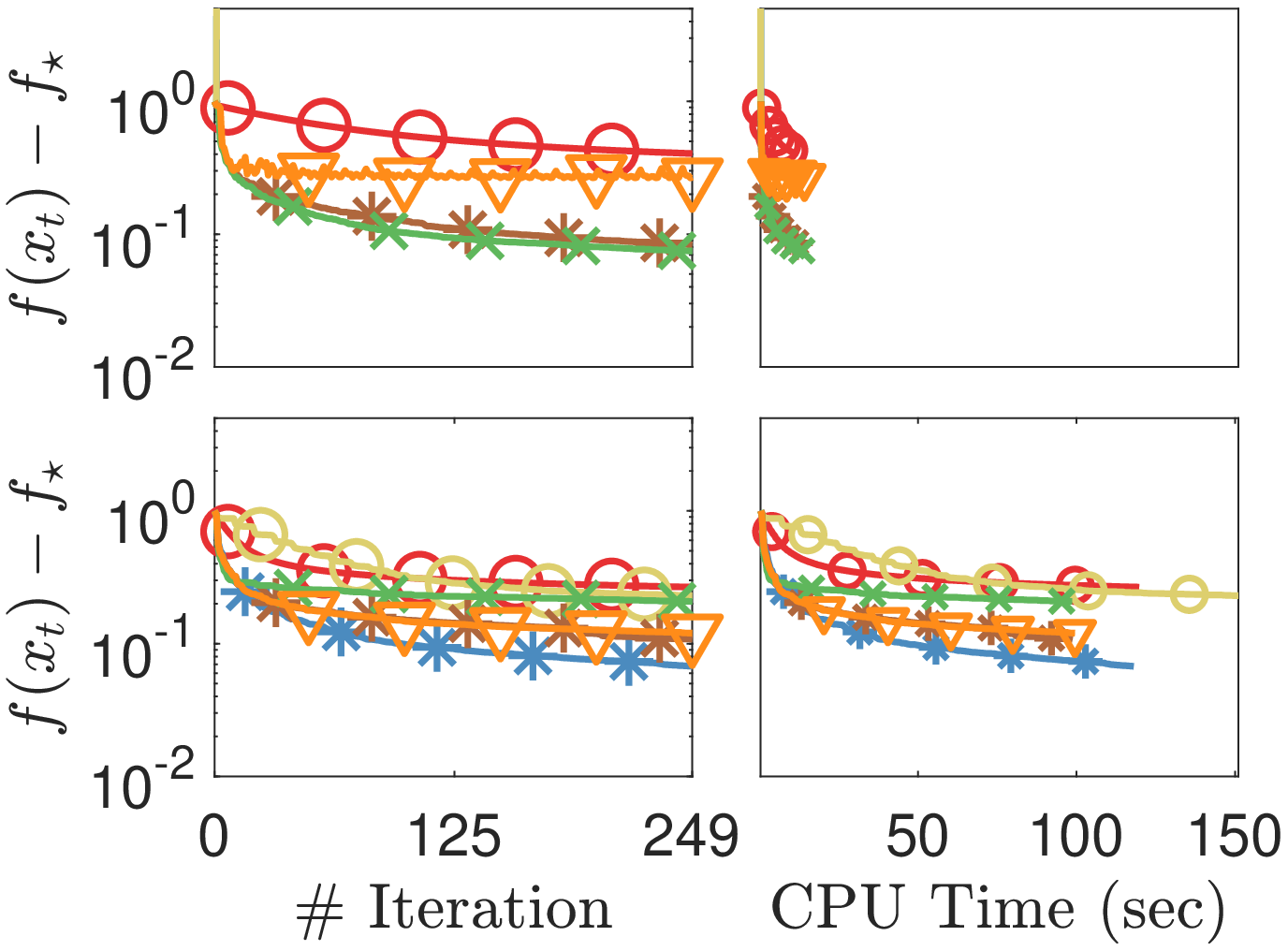}
    \includegraphics[width=0.48\textwidth]{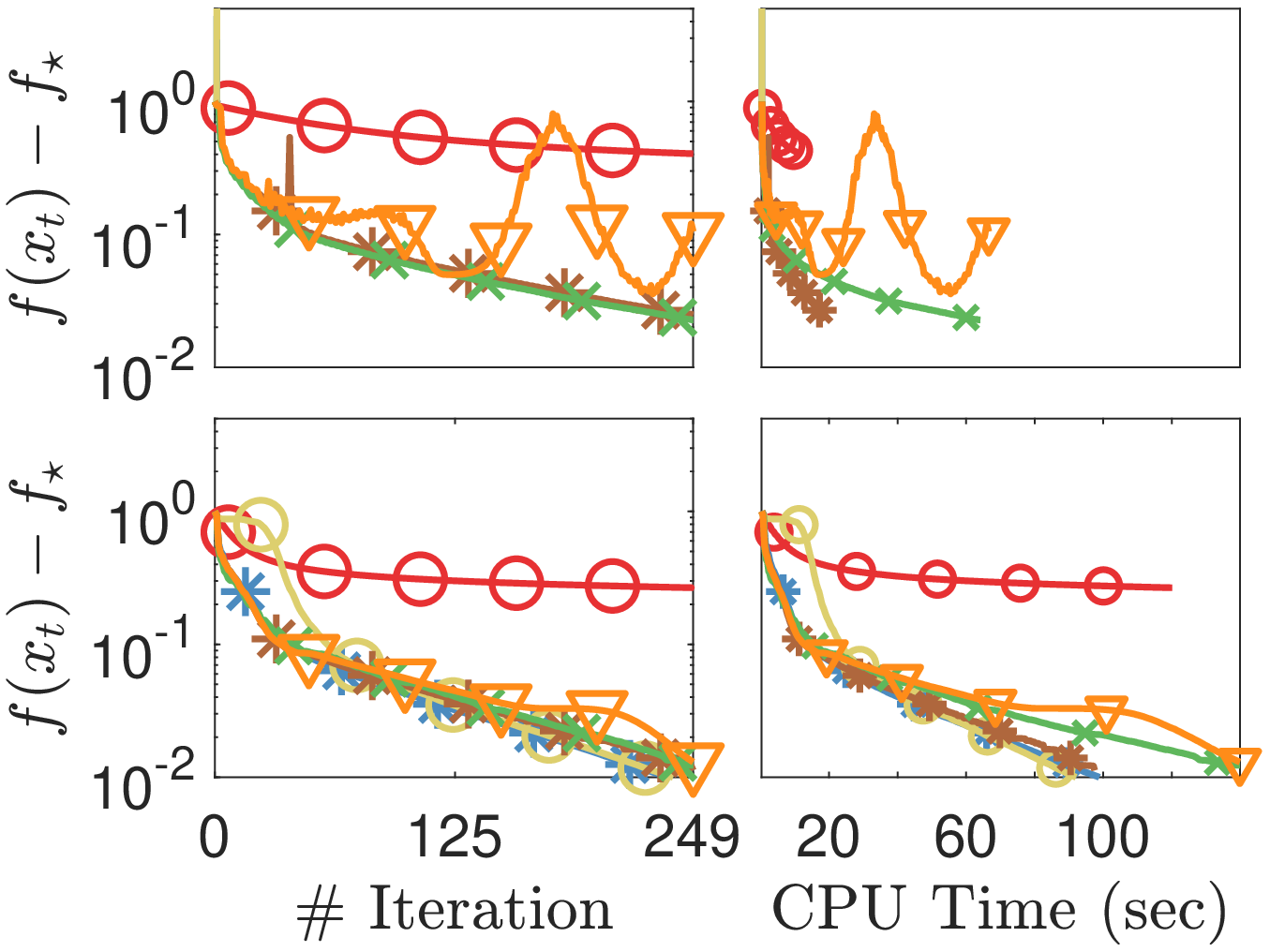}\\
    \includegraphics[width=0.48\textwidth]{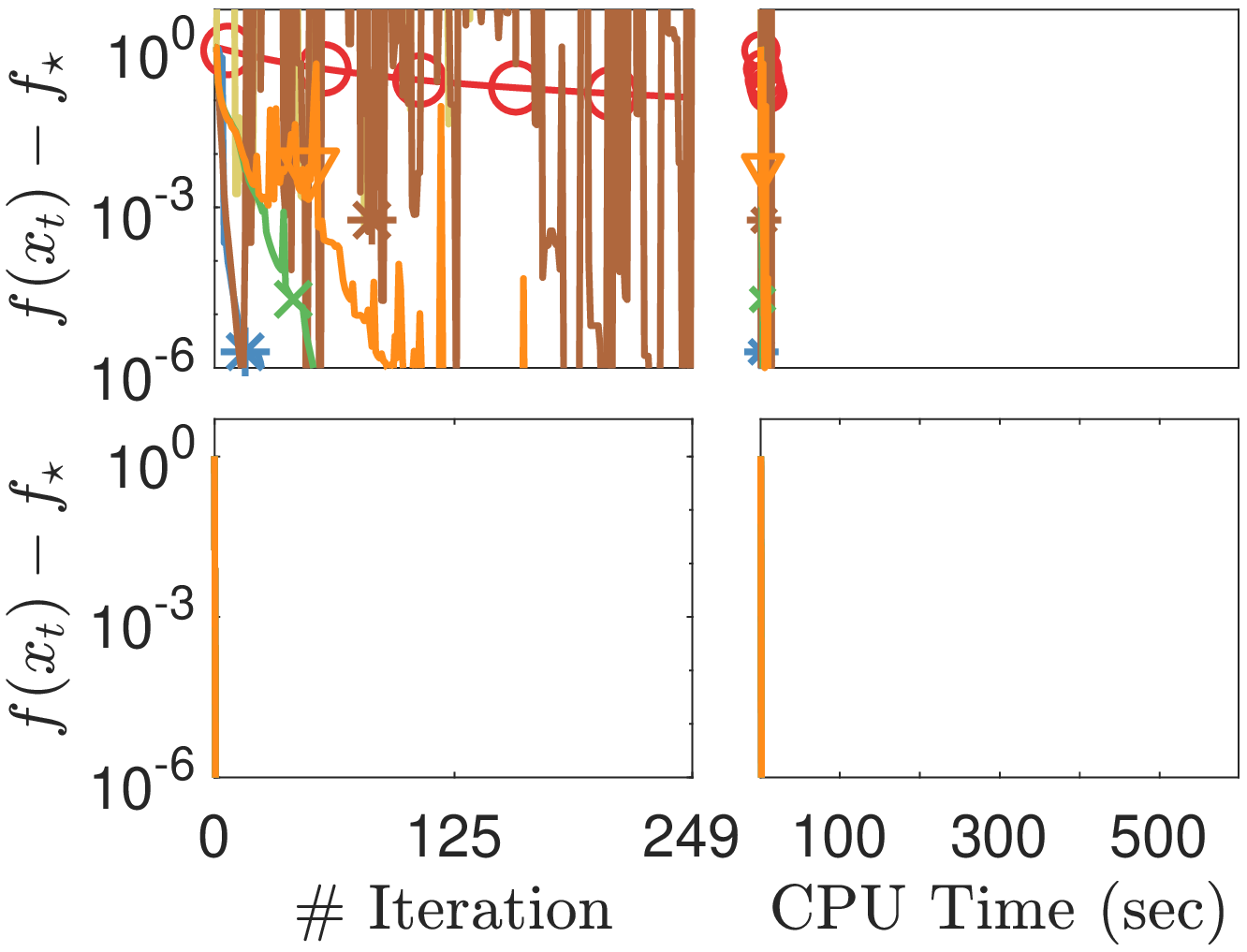}
    \includegraphics[width=0.48\textwidth]{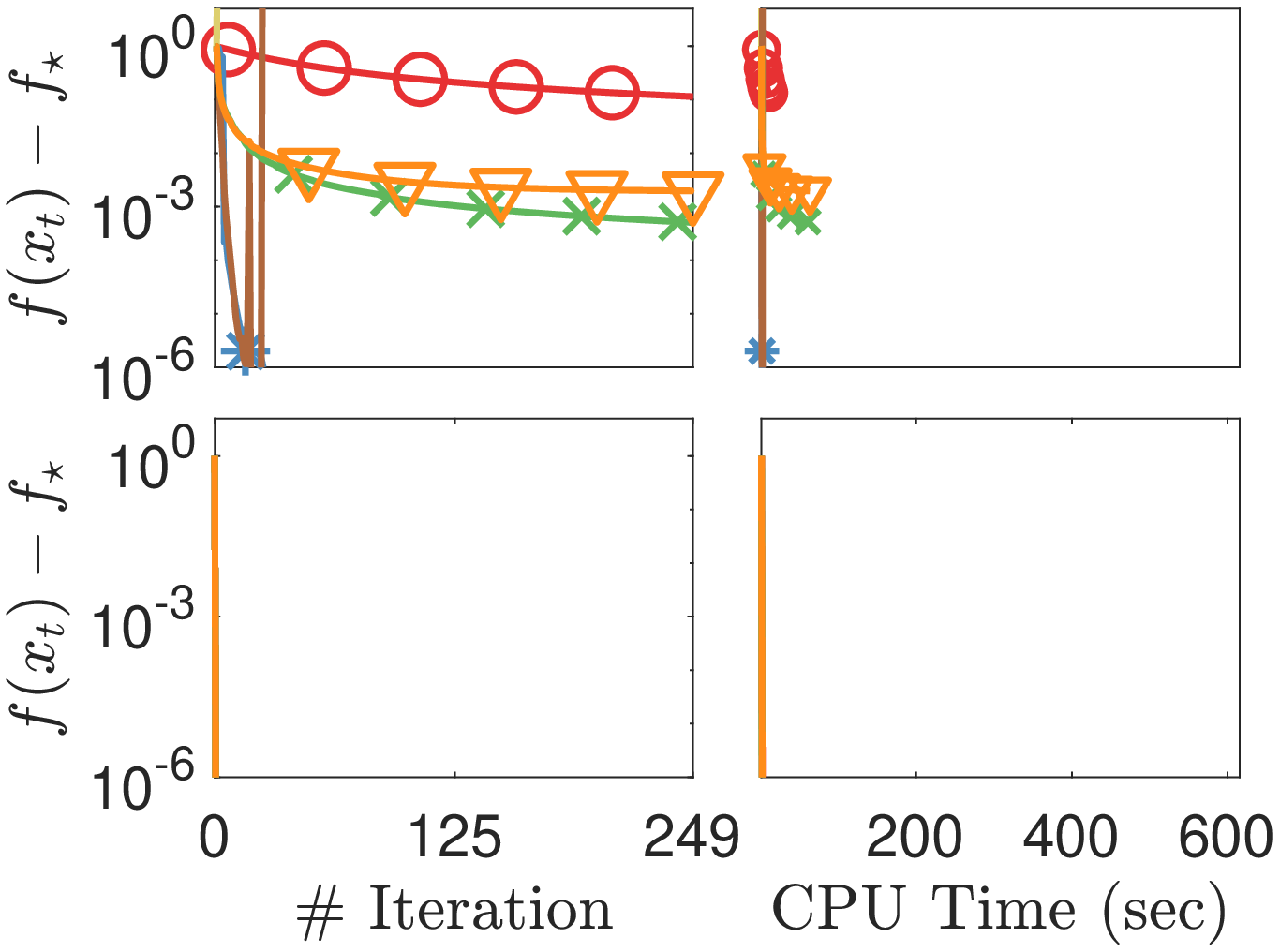}\\
    \includegraphics[width=0.5\textwidth]{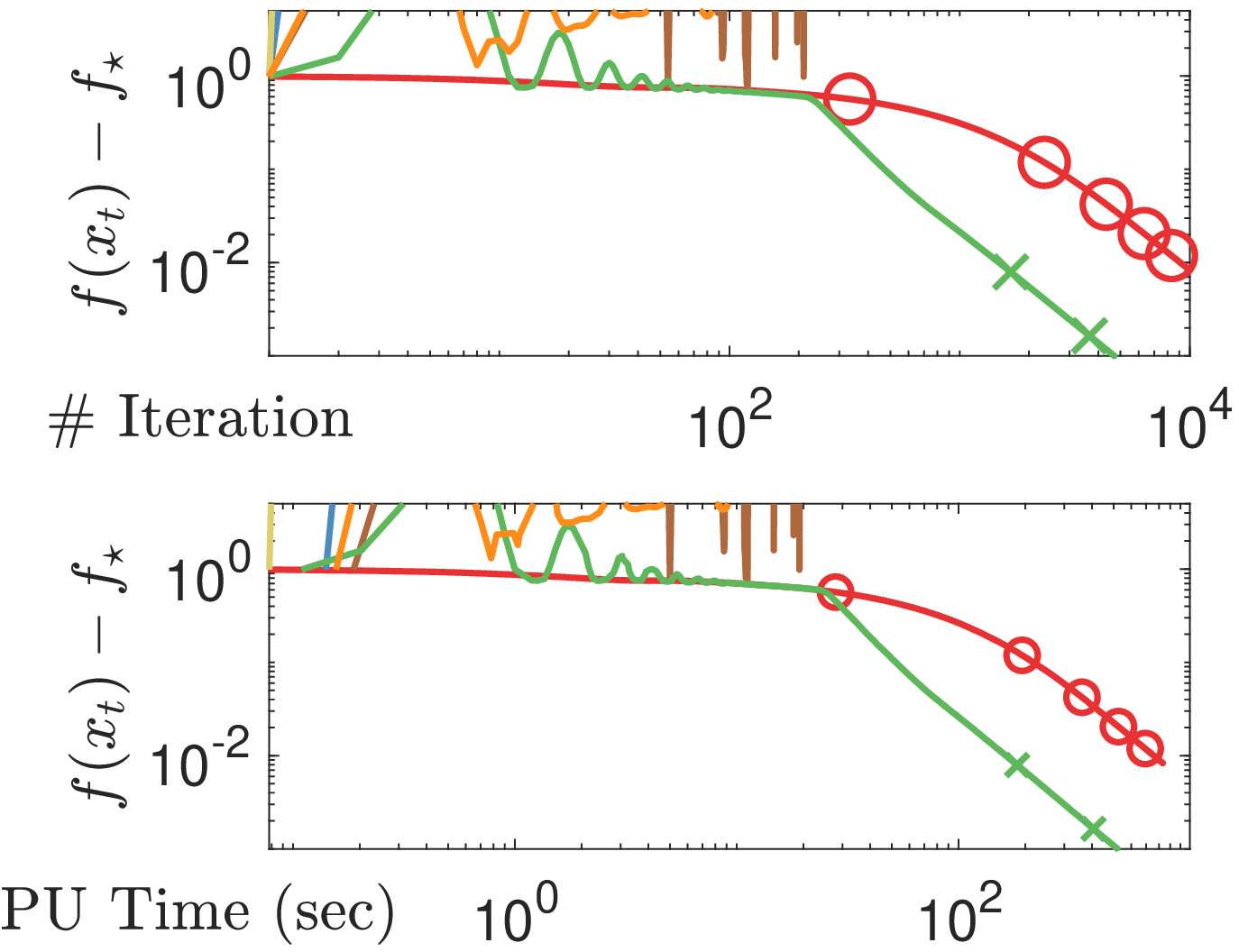}
\end{figure}

\end{document}